\title{
Decay of connection probability in high-dimensional \\ continuum percolation}
\author{
Matthew Dickson\,\orcidlink{0000-0002-8629-4796}\thanks{Department of Mathematics,
	University of British Columbia,
	Vancouver, BC, Canada V6T 1Z2.
	\href{mailto:dickson@math.ubc.ca}{dickson@math.ubc.ca}.
	}
\and
Yucheng Liu\,\orcidlink{0000-0002-1917-8330}\thanks{Beijing International Center for Mathematical Research,
	Peking University,
	Beijing, China 100871 and Department of Mathematics,
	University of British Columbia,
	Vancouver, BC, Canada V6T 1Z2.
	\href{mailto:yliu135@pku.edu.cn}{yliu135@pku.edu.cn}.
	}
}
\date{\vspace{-5ex}} %for arXiv
\theoremstyle{plain}
\newtheorem{theorem}{Theorem}[section]
\newtheorem{lemma}[theorem]{Lemma}
\newtheorem{proposition}[theorem]{Proposition}
\newtheorem{definition}[theorem]{Definition}
\newtheorem{assumption}[theorem]{Assumption}
\theoremstyle{remark}
\newtheorem{remark}[theorem]{Remark}
\numberwithin{equation}{section}
\newcommand{\ie}{i.e.}
\newcommand{\eg}{e.g.}
\newcommand{\eps}{\varepsilon}
\newcommand{\N}{\mathbb{N}}
\newcommand{\Z}{\mathbb{Z}}
\newcommand{\R}{\mathbb{R}}
\renewcommand{\P}{\mathbb{P}}
\newcommand{\T}{\mathbb{T}}
\newcommand{\grad}{\nabla}
\newcommand{\inv}{^{-1}}
\renewcommand{\(}{\left(}
\renewcommand{\)}{\right)}
\newcommand{\half}{\frac{1}{2}}
\newcommand{\1}{\mathds{1}}
\newcommand{\nl}{\nonumber \\}
\providecommand{\bigabs}[1]{\big\lvert#1\big\rvert}
\providecommand{\biggabs}[1]{\bigg\lvert#1\bigg\rvert}
\providecommand{\bignorm}[1]{\big\lVert#1\big\rVert}
\providecommand{\Bignorm}[1]{\Big\lVert#1\Big\rVert}
\providecommand{\biggnorm}[1]{\bigg\lVert#1\bigg\rVert}
\providecommand{\Biggnorm}[1]{\Bigg\lVert#1\Bigg\rVert}
\newcommand{\supk}[1]{^{(#1)}}
\newcommand{\supzero}{^{(0)}}
\newcommand{\supn}{^{(n)}}
\newcommand{\supa}{^{(a)}}
\newcommand{\supb}{^{(b)}}
\newcommand{\supab}{^{(a,b)}}
\newcommand{\supphi}{^{(\phi)}}
\newcommand{\suptheta}{^{(\theta)}}
\newcommand{\supthetao}{^{(\theta),\circ}}
\newcommand{\supzeroo}{^{(0),\circ}}
\newcommand{\supphitheta}{^{(\phi,\theta)}}
\newcommand{\supphizero}{^{(\phi,0)}}
\newcommand{\phitheta}{{\phi+\theta}}
\newcommand{\ab}{{a+b}}
\newcommand{\plus}{{\lambda, +}}
\newcommand{\D}{\mathrm{d}}
\newcommand{\crit}{_{\lambda_c}}
\newcommand{\blackcirc}{\bullet}
\newcommand{\connect}{\leftrightarrow}
\newcommand{\spec}{_{\mathrm{special}}}
\newcommand{\zuo}{_{\mathrm{left}}}
\newcommand{\you}{_{\mathrm{right}}}
\providecommand{\Rd}{{\mathbb R^d}}
\providecommand{\Zd}{\mathbb Z^d}
\newcommand{\dc}{d_c}
\newcommand{\Lacelam}{\Pi_\lambda}
\newcommand{\tlam}{\tau_\lambda}
\newcommand{\taulam}{\tlam}
\newcommand{\taul}{\tlam}
\newcommand{\lam}{\lambda}
\newcommand{\tlamo}{\tau_\lambda^\circ}
\newcommand{\connf}{\varphi}
\newcommand{\adj}{\connf}
\DeclarePairedDelimiter\abs{\lvert}{\rvert}
\DeclarePairedDelimiter\norm{\lVert}{\rVert}
\DeclarePairedDelimiter\floor{\lfloor}{\rfloor}
\newcommand{\dd}{\mathrm{d}} 
\providecommand{\Rd}{{\mathbb R^d}}
\providecommand{\Zd}{{\mathbb Z^d}}
\newcommand{\vertiii}[1]{{\left\vert\kern-0.25ex\left\vert\kern-0.25ex\left\vert #1 
    \right\vert\kern-0.25ex\right\vert\kern-0.25ex\right\vert}}
\newcommand{\orig}{0}
\newcommand{\EndBlock}{\bar{\psi}}
\newcommand{\adjLine}{\raisebox{3pt}{\tikz[scale=0.6]{
	\draw (0,0) -- (1,0); 
	\draw (0.5,0) circle (0pt) node[above]{\scriptsize$\sim$};}}}
\newcommand{\tauLine}{\raisebox{3pt}{\tikz[scale=0.6]{
	\draw (0,0) -- (1,0); }}}
\newcommand{\tauCircleLine}{\raisebox{3pt}{\tikz[scale=0.6]{
	\draw (0,0) -- (1,0); 
	\draw (0.5,0.2) circle (2pt); }}}
\newcommand{\weightedLine}{\raisebox{2pt}{\tikz[scale=0.6]{
	\draw[thick, double=white] (0,0) -- (1,0); }}}
\newcommand{\weightedLineRed}{\raisebox{2pt}{\tikz[scale=0.6]{
	\draw[thick, double=white,red] (0,0) -- (1,0); }}}
\newcommand{\weightedCircleLine}{\raisebox{2pt}{\tikz[scale=0.6]{
	\draw[thick, double=white] (0,0) -- (1,0);
	\draw (0.5,0.2) circle (2pt); }}}
\newcommand{\Triangleab}{\raisebox{-15pt}{
    % [inline block 0: 83 envs, 62914 chars -> data_tex | \begin{tikzpicture}[scale=0.6]     	\draw (0,0) -- (1,0.6) -- (1,-0.6) -- cycle;...]

}}
\begin{document}
\maketitle

\begin{abstract}
We study a percolation model on $\mathbb{R}^d$ called the random connection model.
For $d$ large, we use the lace expansion to prove that the critical two-point connection probability decays like $|x|^{-(d-2)}$ as $|x| \to \infty$, with possible anisotropic decay. 
Our proof also applies to nearest-neighbour Bernoulli percolation on $\mathbb{Z}^d$ in $d \ge 11$ and simplifies considerably the proof given by Hara in 2008.
The method is based on the recent deconvolution strategy of Liu and Slade and uses an $L^p$ version of Hara's induction argument.
\end{abstract}

%\noindent
%{\bf MSC2020 Classification:} 60K35, 82B21, 82B27, 82B43.
%
%\smallskip\noindent
%{\bf Keywords:} random connection model, percolation, two-point function, lace expansion.

%\setcounter{tocdepth}{3}
%\tableofcontents

\section{Introduction and results}

The random connection model is a model of random graphs on $\Rd$,
with vertices sampled according to a Poisson process with intensity $\lambda > 0$, and with edges drawn independently according to an integrable, even adjacency function $\adj: \Rd \to [0,1]$.
A classical example is the Gilbert disk model \cite{Gilb61} or the Boolean model, for which $\adj(x) = \1\{\abs x \le R\}$ with some fixed parameter $R > 0$; this connects every pair of vertices within distance $R$ of each other.
In dimensions $d \ge 2$, 
the general model undergoes a percolation phase transition at a critical intensity $\lambda_c \in (0,\infty)$ \cite{Penr91}.
We are interested in the case where $d$ is sufficiently large (conjectured for all $d$ above the upper critical dimension $\dc = 6$), where the model exhibits \emph{mean-field} behaviour.

While quantities like the critical intensity $\lambda_c$ depend on the fine details of the percolation model (e.g., the exact form of the adjacency function $\adj$),
the critical and near-critical behaviours of the model are expected to be more universal. 
For example, the expected cluster size is believed to diverge polynomially as $\lambda \to \lambda_c^-$, with \emph{critical exponent} $\gamma$.
Similarly, the percolation probability is believed to decay polynomially as $\lambda \to \lambda_c^+$ with exponent $\beta$,
and the tail of the cluster size at $\lambda_c$ is believed to decay polynomially with exponent $1/\delta$. 
We refer to \cite{HH17book} for more details on these critical exponents for Bernoulli percolation.
The critical exponents are usually easy to compute in non-interacting analogues of the percolation models (such as spatial branching processes), and percolation models are said to exhibit mean-field behaviour if their critical exponents are the same as those in the non-interacting analogues. 
For the random connection model, 
it is proved in \cite{HHLM26, CD24} using the \emph{lace expansion} that when $d$ is sufficiently large, there is no infinite cluster at $\lambda_c$, the triangle condition holds, and the critical exponents take the mean-field values $\gamma=1$, $\beta=1$, and $\delta=2$.
In this paper, we study the critical exponent $\eta$, which describes the decay of the critical two-point function:
\begin{equation} \label{eq:eta_intro}
\P\crit(0 \connect x) \approx \frac 1 {\abs x^{d-2+\eta}} 
	\qquad (\abs x \to \infty).
\end{equation}
We prove that $\eta$ takes the mean-field value $\eta = 0$, in the sense that the ratio of the two sides of \eqref{eq:eta_intro} converges, 
in the same high-dimensional setting as \cite{HHLM26, CD24}.

For high-dimensional Bernoulli percolation on $\Zd$, the analogous statement that \eqref{eq:eta_intro} holds with $\eta = 0$ in the asymptotic sense is a celebrated result of Hara, van der Hofstad, Slade \cite{HHS03} and Hara \cite{Hara08}.
It is used, e.g., to construct the incipient infinite cluster (IIC) \cite{HJ04}. 
Also, the slighter weaker statement that \eqref{eq:eta_intro} holds with $\eta=0$, in the sense that the ratio of the two sides remains bounded away from $0$ and infinity, is used to establish one-arm exponents \cite{KN09,KN11} and study percolation in the half-space \cite{CH20} and on a large discrete torus \cite{HMS23}. 
(This form of $\eta = 0$ can also be proved for ``spread-out'' models using an alternative method \cite{DP25b}.)
We therefore expect our result to be useful for studying the IIC and the random connection model in a half-space or on a continuum torus.

Our proof uses the lace expansion, which provides a convolution equation for the two-point function. 
The lace expansion was originally introduced for the self-avoiding walk \cite{BS85,HS92a}, and it has been extended to many settings, including Bernoulli percolation \cite{HS90a,FH17}, the Ising model \cite{Saka07}, and the random connection model \cite{HHLM26}.
To derive the asymptotic behaviour of the critical two-point function, we use a deconvolution theorem on $\Rd$ proved in \cite{Liu25Rd}, which is based on and has weaker hypotheses than the $\Zd$ deconvolution theorem of Liu and Slade \cite{LS24a}. 
In particular, it is realised in \cite{Liu25Rd} (first for subcritical models in \cite{Liu25_subcritical}) that it is sufficient to estimate $L^p$ moments for the lace function $\Pi(x)$ 
(see \eqref{eq:Sigma} for precise conditions) 
to derive asymptotics for the two-point function.
Compared to the decay hypotheses on $\Pi(x)$ used in \cite{LS24a,Hara08}, moments of $\Pi(x)$ are easier to estimate, and we prove the required conditions using an induction argument similar to the one used by Hara in \cite[Proof of (1.47)]{Hara08}. 
This strategy allows us to bypass the second, more difficult diagrammatic estimate of \cite[Lemma~1.7]{Hara08}, also simplifying the proof for percolation on $\Zd$.

\medskip\noindent
{\bf Notation.}  We write
$a \vee b = \max \{ a , b \}$ and $a \wedge b = \min \{ a , b \}$.
We write
$f = O(g)$ or $f \lesssim g$ to mean there exists a constant $C> 0$ such that $\abs {f(x)} \le C\abs {g(x)}$, and write $f= o(g)$ to mean $\lim f/g = 0$.
We write $\| \cdot \|_\infty$ for the sup norm of a function, 
write $\norm f_p = (\int_\Rd \abs{ f(x)}^p \dd x)^{1/p} $ for $1 \le p < \infty$, 
and write
\begin{equation}
\norm f_{ L^{p_1} \cap L^{p_2} } = \max \bigl\{ 
	\norm f_{p_1} , \norm f_{p_2}  \big\} 
\end{equation}
for $p_1,p_2 \in [1, \infty]$.
We use the (inverse) Fourier transform
\begin{equation}
\hat f(k) = \int_\Rd f(x) e^{ik\cdot x}  \dd x,   
\qquad
g(x) = \int_\Rd \hat g(k) e^{-ik\cdot x}  \frac{ \dd k }{ (2\pi)^d }
\qquad (k,x \in \Rd),
\end{equation}
which is defined when $f$ or $\hat g$ is in $L^1(\Rd)$, respectively.

\subsection{The model and main result}

The random connection model is defined as follows.
Given $\lambda>0$, let $\eta$ be distributed as a Poisson point process on $\R^d$ with intensity $\lam\dd x$, where $\dd x$ is the Lebesgue measure. We will call $\eta$ our vertex set. 
Let $\adj: \Rd \to [0,1]$ be an integrable, even function (\ie, $\adj(-x) = \adj(x)$ for all $x\in \Rd$), which we call the \emph{adjacency function}.
For all pairs of distinct vertices $x \ne y\in\eta$, we independently draw an edge between them with probability $\adj(x-y)$.
This constructs a random graph $\xi$, whose measure we denote by $\P_\lam$.
Two vertices $x,y\in\eta$ are said to be \emph{adjacent}, denoted $x \sim y$, if there is an edge between them.
They are said to be \emph{connected in $\xi$}, denoted $x \leftrightarrow y$ in $\xi$, if either $x=y$ or if there exists a finite sequence of vertices $x=u_0,u_1,\ldots,u_k,u_{k+1}=y$ in $\eta$ such that $u_i\sim u_{i+1}$ for all $0\leq i\leq k$.

Given $x,y\in\R^d$, it is convenient to consider the Palm version of the random graph, defined on the vertex set $\eta^{x,y}=\eta\cup\{x,y\}$, with edges drawn independently between $x,y$, and all vertices of $\eta$ according to $\adj$ as above. The resulting Palm random graph is denoted $\xi^{x,y}$.
The \emph{two-point function} $\tlam\colon \R^d\to [0,1]$ of the model is defined to be the connection probability
\begin{equation}
\tlam(x) = \mathbb{P}_\lambda(\orig\leftrightarrow x\text{ in }\xi^{\orig,x})
\end{equation}
in the Palm random graph. 
For a more detailed construction of the model, we refer the reader to \cite{HHLM26}.

We now restrict our attention to the mean-field dimensions $d > 6$ and introduce more assumptions on the adjacency function $\adj$. The conditions are slightly technical, but two standard examples of adjacency functions that obey Assumption~\ref{ass:adj} are the disk model
\begin{equation} \label{eq:disk_model}
\adj(x) = \1\{\abs x \le R\}
\qquad \text{with} \qquad
R = R_d = \pi^{-1/2} \Gamma(\tfrac d 2 + 1)^{1/d},
\end{equation}
and the Gaussian model
\begin{equation}  \label{eq:gaussian_model}
\adj(x) = \frac 1  { (2\pi)^{d/2} } \exp\Bigl\{ - \frac 1 {2} \abs x^2 \Big\} ,
\end{equation}
where $| \cdot |$ is the Euclidean norm on $\Rd$. 
Note that $\connf$ depends on the dimension $d$, but we will suppress this dimension dependence in our notation.

\begin{assumption} \label{ass:adj}
We assume $\adj: \Rd \to [0,1]$ is an integrable, even function
with $\int_\Rd \adj(x)\D x = 1$ that satisfies the following properties:
\begin{enumerate}

\item[(i)]
For each $d$, we have
\begin{gather}
\label{eq:adj_hyp}
\abs x^2 \adj(x) \in L^1 \cap L^2(\R^d), 
\\
\label{eq:adj_2+eps}
\abs x^{2+\eps} \adj(x) \in L^1(\R^d) 
\qquad \text{ for some $\eps>0$} ,
\\
\label{eq:adj_d-2}
\abs x^{d-2} \adj(x) \in L^{p} \cap L^2 \cap L^\infty (\R^d)
\qquad \text{for some $1 \le p < \frac d {4}$} , 
\end{gather}
and $\adj(x) = o(\abs x^{-(d-2)})$ as $\abs x\to\infty$;

\item[(ii)] \emph{(Infrared bound)}
There is a constant $K_{\rm IR, \adj} > 0$ (independent of $d$) such that 
\begin{equation}
\hat \adj (0) - \hat \adj(k) \ge K_{\rm IR, \adj} (\abs k^2 \wedge 1)
\qquad (k\in \Rd);
\end{equation}

\item[(iii)] \emph{(Lace expansion hypothesis\footnote{
This is hypothesis (H1.2) of \cite{HHLM26}, 
since for $m > 3$ we can bound $\norm{ \adj^{*m} }_\infty \le \norm{ \adj^{*3} }_\infty \norm \adj_1^{m-3} \le g(d)$,
and since we can always make $g(d) \ge \zeta^d$ for some $\zeta\in(0,1)$ by taking the maximum.
})}
There exists a function $g: \N \to [0, \infty)$,
satisfying $g(d) \to 0$ as $d\to \infty$,
such that
\begin{equation}
\norm{ \adj * \adj * \adj }_\infty \le g(d) 
\end{equation}
and
\begin{equation}
\bignorm{ (\adj  * \adj)(x) \1\{\abs x \ge \eps_d\} }_\infty \le g(d) 
\end{equation}
with some $\eps_d \in [0, R_d)$, where $R_d = \pi^{-1/2} \Gamma(\frac d 2 + 1)^{1/d}$ is the radius of the ball with unit volume in $\Rd$.
\end{enumerate}
\end{assumption}

The condition that $\adj$ integrates to $1$ is for convenience and can always be achieved by rescaling space and intensity (see \cite[Section~5.1]{HHLM26}), so there is no loss of generality in fixing the radius in \eqref{eq:disk_model} or the variance of the Gaussian distribution in \eqref{eq:gaussian_model}.
Parts~(ii) and (iii) of Assumption~\ref{ass:adj} are used in \cite{HHLM26} to derive the lace expansion when $d$ is sufficiently large.
Part~(i) is additional but can be readily verified, \eg, when $\adj$ decays like $\adj(x) \le C_d (1 + \abs x)^{-(d+2+\rho)}$ with some $\rho > 0 \vee \frac{d-8}2$.

\begin{remark}
Our assumption \eqref{eq:adj_d-2} on $\abs x^{d-2} \adj(x)$ might seem restrictive, 
but in fact standard methods \cite{Uchi98,LL10} to derive the $\abs x^{-(d-2)}$ asymptotics for the \emph{random walk} two-point function, with step distribution $\adj$, already require the stronger condition that $\abs x^{d-2} \adj(x) \in L^1$.
If one is willing to assume $\adj(x) \le C_d (1 + \abs x)^{-(d+2+\rho)}$ with some $\rho > 0$, Hara's Gaussian lemma \cite{Hara08} can be used to derive $\abs x^{-(d-2)}$ asymptotics without the $(d-2)$-th moment.
\end{remark}

The following is our main result, which is a precise version of $\eta=0$ in \eqref{eq:eta_intro}.

\begin{theorem} \label{thm:main}
Let $\adj$ obey Assumption~\ref{ass:adj}
and let $a_d = \frac{ \Gamma(\frac{ d-2 } 2 ) }{ 2\pi^{d/2}}$.
If $d > d_0$ with $d_0 \ge 8$ sufficiently large, 
then there is a positive-definite diagonal matrix $\Sigma$ such that
\begin{equation} \label{eq:tau_asymp}
\tau\crit (x) \sim \frac{ a_d }{ \lambda_c \sqrt{\det \Sigma} } \frac 1 { ( x \cdot \Sigma\inv x )^{(d-2)/2} } 
	\qquad (\abs x \to \infty).
\end{equation}
The matrix $\Sigma$ is given explicitly in terms of lace expansion quantities in \eqref{eq:Sigma}.
\end{theorem}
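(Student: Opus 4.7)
The plan follows the deconvolution strategy developed in \cite{Liu25Rd, LS24a}. The lace expansion of \cite{HHLM19}, valid once $d$ is sufficiently large, supplies an identity of the form
\begin{equation} \label{eq:proof_lace}
\hat{\tau}_\lambda(k) \;=\; \frac{\hat \Pi_\lambda(k)}{1 - \lambda\,\hat\adj(k)\,\hat\Pi_\lambda(k)},
\end{equation}
where $\Pi_\lambda = \sum_{n\ge 1}(-1)^{n-1} \Pi^{(n)}_\lambda$ is an absolutely convergent series of non-negative diagrams. At $\lambda=\lambda_c$ the denominator vanishes at $k=0$, and a second-order Taylor expansion gives
\begin{equation}
1 - \lambda_c\,\hat\adj(k)\,\hat\Pi_{\lambda_c}(k) \;=\; k\cdot \Sigma k + o(|k|^2) \qquad (k\to 0),
\end{equation}
with $\Sigma$ the symmetric positive-definite matrix made explicit in \eqref{eq:Sigma}; after passing to its eigenbasis we may take $\Sigma$ diagonal.

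The main analytic input is the $\Rd$ deconvolution theorem of \cite{Liu25Rd}. Applied to \eqref{eq:proof_lace}, it turns the $(k\cdot\Sigma k)^{-1}$ behaviour of $\hat\tau_{\lambda_c}$ near $k=0$ into the pointwise asymptotic \eqref{eq:tau_asymp}, with the constant $a_d/(\lambda_c\sqrt{\det\Sigma})$ emerging from the Fourier inversion
\begin{equation}
\int_\Rd \frac{e^{-ik\cdot x}}{k\cdot\Sigma k}\,\frac{\dd k}{(2\pi)^d} \;=\; \frac{a_d}{\sqrt{\det\Sigma}}\,\frac{1}{(x\cdot \Sigma\inv x)^{(d-2)/2}}.
\end{equation}
Its hypotheses reduce to: (i)~the infrared bound and smoothness of $\hat\Pi_{\lambda_c}$ already established in \cite{HHLM19}; (ii)~the smallness $\adj(x)=o(|x|^{-(d-2)})$ and the integrability properties in Assumption~\ref{ass:adj}(i); and (iii)~a weighted $L^p$ moment bound
\begin{equation} \label{eq:proof_moment}
\bignorm{\;|x|^{d-2}\,\Pi_{\lambda_c}(x)\;}_{L^{p_1}\cap L^{p_2}} < \infty
\end{equation}
for a suitable pair of exponents $p_1,p_2$ consistent with the range permitted by Assumption~\ref{ass:adj}(i). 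Granting \eqref{eq:proof_moment}, the deconvolution theorem delivers Theorem~\ref{thm:main} directly.

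The bulk of the proof therefore consists in verifying \eqref{eq:proof_moment}. I would carry this out by induction on the number $n$ of lace blocks, in the spirit of Hara's pointwise induction in \cite[Proof of (1.47)]{Hara08}, but with the induction hypothesis reformulated at the level of $L^p$ moments rather than pointwise decay. Each $\Pi^{(n)}_{\lambda_c}$ admits a diagrammatic bound by products of two-point functions $\tlamC$ along a spanning (lace-type) structure; one then distributes the weight $|x|^{d-2}$ along a path in this structure using $|x|^{d-2} \le C_n \sum_e |x_e|^{d-2}$, and bounds the result via Young's convolution inequality combined with H\"older. The base case uses the a priori pointwise bound $\tlamC(x)\lesssim (1+|x|)^{-(d-2)}$, a consequence of the infrared bound and Assumption~\ref{ass:adj}(i). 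Each inductive step pays a small factor coming from the triangle condition $\trilam\crit\lesssim g(d)$, so the series in $n$ converges once $d$ is large enough.

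The hardest step is the bookkeeping of this weight distribution across a generic lace diagram: the choice of which convolution factor absorbs the $|x|^{d-2}$ weight must remain compatible with Young's inequality and with the unweighted norms $\|\tlamC\|_q$ available from the infrared input. Working at the $L^p$ level rather than pointwise makes this bookkeeping significantly more forgiving than in \cite{Hara08}; in particular it sidesteps the delicate second diagrammatic estimate \cite[Lemma~1.7]{Hara08}, which is precisely the simplification promised in the introduction and the reason the same proof applies to high-dimensional Bernoulli percolation on $\Zd$.
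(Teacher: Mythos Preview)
Your proposal has the right high-level architecture---lace expansion plus the $\Rd$ deconvolution theorem of \cite{Liu25Rd}, reduced to a weighted $L^p$ moment bound on $\Pi_{\lambda_c}$---but the verification of \eqref{eq:proof_moment} contains a genuine circularity. You write that the base case ``uses the a priori pointwise bound $\tau_{\lambda_c}(x)\lesssim (1+|x|)^{-(d-2)}$, a consequence of the infrared bound and Assumption~\ref{ass:adj}(i).'' This pointwise decay is \emph{not} available a priori: the infrared bound is purely Fourier-side information, and converting it into $|x|^{-(d-2)}$ decay in $x$-space is exactly the deconvolution problem you are trying to solve. All that is known at the outset is $\tau_{\lambda_c}\in L^2$, the finite triangle, and $\|\,|x|^2\Pi_\lambda\,\|_{L^1\cap L^\infty}<\infty$ (Lemma~\ref{lem:Pi_base}). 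If you had the pointwise bound on $\tau_{\lambda_c}$, the diagram estimates would indeed be straightforward, but that assumption begs the question.

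Relatedly, you have misidentified the induction variable. Hara's induction in \cite[Proof of~(1.47)]{Hara08}, and the paper's Proposition~\ref{prop:climb}, run over the \emph{moment order} $a$, not over the number $n$ of lace blocks: one shows that a good $\phi$-th moment of $\Pi_\lambda$ implies a good $(\phi+2)$-th moment, climbing from $\phi\in\{1,2\}$ up to $d-1$. The point is a feedback loop: the good $\phi$-th moment of $\Pi_\lambda$ gives control on $\nabla^\alpha\hat J_\lambda$ for $|\alpha|\le\phi$ (Lemma~\ref{lem:fourier}), which via the Fourier representation \eqref{eq:tau_int} yields improved $L^s$ bounds on $|x|^a\tilde\tau_\lambda$ (Lemma~\ref{lem:simple}(i)), which in turn bounds the weighted diagrams needed for the $(\phi+2)$-th moment of $\Pi_\lambda$. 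The sum over $n$ is handled \emph{inside} each step of this climb (Proposition~\ref{prop:Pi_n}), with the small factor $V_{\lambda_c}^{n-4}$ ensuring convergence. Without this bootstrap, there is no way to close the argument from the available inputs. (As a minor point, your lace identity \eqref{eq:proof_lace} is also not the one used: the paper works with $\hat J_\lambda = \lambda(\hat\varphi+\hat\Pi_\lambda)$ and $\lambda\hat\tau_\lambda = \hat J_\lambda/(1-\hat J_\lambda)$, so the denominator is $1-\lambda\hat\varphi-\lambda\hat\Pi_\lambda$, not $1-\lambda\hat\varphi\,\hat\Pi_\lambda$.)
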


Our method is not perturbative in itself, and it can be applied as long as the lace expansion converges. 
The restriction of Theorem~\ref{thm:main} to $d_0 \ge 8$, rather than the optimal $d_0 \ge \dc = 6$, is due to a use of the square diagram in Section~\ref{sec:simple} that is not completely necessary but simplifies the proof.
We do not aim to optimise the proof, 
as the lace expansion for the random connection model is only known to converge when $d_0$ is sufficiently large.
For percolation on $\Zd$, \cite{FH17} shows that we can take $d_0 = 10$, which the current proof covers.

\begin{remark}
Rather than ``nearest-neighbour'' models which we considered in this paper, one can introduce an additional large ``spread-out'' parameter to show the convergence of the lace expansion in any dimension $d > \dc = 6$.
Our method can then be used to prove \eqref{eq:tau_asymp} for sufficiently spread-out random connection model in $d > 8$. 
For the spread-out model in dimension $d = 7 \text{ or }8$, 
an alternative perturbative method based on convolution estimates \cite{HHS03, LS26b, BKM24} is available.
\end{remark}

\subsection{Strategy of proof}
\label{sec:proof_main}

We now present the framework of the proof. 
By the lace expansion of \cite{HHLM26}, under Assumption~\ref{ass:adj} there exists $d_0 \ge 6$ large such that the following holds for all dimensions $d > d_0$:
there is a family of functions $\Pi_\lambda: \Rd \to \R$ for which the lace expansion equation (Ornstein--Zerneke equation)
\begin{equation} \label{eq:OZ-intro}
\tau_\lambda = (\adj + \Pi_\lambda) + \lambda (\adj + \Pi_\lambda) * \tau_\lambda 
\end{equation}
holds for all $x\in \Rd$ and $\lambda \le \lambda_c$.
We rearrange this slightly to put it into the deconvolution framework of \cite{Liu25Rd}.
Let $J_\lambda = \lambda (\adj + \Pi_\lambda)$, 
and let $\delta$ denote the Dirac delta function (convolution identity).
Equation \eqref{eq:OZ-intro} can then be rewritten as
\begin{equation} \label{eq:OZ}
( \delta - J_\lambda ) * \lambda \tau_\lambda = J_\lambda .
\end{equation}
To get the asymptotics of $\tau\crit(x)$ using \cite[Theorem~1.3]{Liu25Rd}, we need to verify certain moment conditions on $J\crit = \lam_c (\adj + \Pi\crit)$.
The necessary conditions on $\adj$ have been put as part of Assumption~\ref{ass:adj}. For $\Pi\crit$, the conditions can be stated in terms of the following definition. Our goal is to prove that the $(d-2)$-th moment of $\Pi_\lam$ is good.

\begin{definition} \label{def:good}
Let $a \ge 0$.
If $a \le 2$, we say that the $a$-th moment of $\Pi_\lam$ is \emph{good} if
\begin{equation} 
\sup_{\lambda \in [\half \lambda_c, \lambda_c ) }
\bignorm{ \abs x^a \Pi_\lambda(x) }
	_{L^1 \cap L^\infty } 
	< \infty .
\end{equation}
If $a \in (2, d+2]$, we write 
\begin{equation}
p_a^* = \frac d { d - a + 2 } \in (1, \infty], 
\end{equation}
and we say that the $a$-th moment of $\Pi_\lam$ is \emph{good} if there exists some $p_a \in [1, p^*_a)$ for which
\begin{equation} \label{eq:Pi_good_moment}
\sup_{\lambda \in [\half \lambda_c, \lambda_c ) }
\bignorm{ \abs x^a \Pi_\lambda(x) }
	_{L^{p_a} \cap L^2 \cap L^\infty } 
	< \infty .
\end{equation}
\end{definition}

Definition~\ref{def:good} has been designed to allow an induction argument on $a$. We will use the usual convergence argument of the lace expansion to show that the second moment of $\Pi_\lam$ is good. Then we inductively increase $a$, until we reach the required $(d-2)$-th moment.
Hara used a similar strategy in \cite[Proof of (1.47)]{Hara08} on the $L^1$ norm of $\abs x^a \Pi(x)$. 
The use of $L^p$ norms is an innovation of this paper.
When $a=d-2$, we do not expect \eqref{eq:Pi_good_moment} to be finite with $p_a = 1$.

\begin{proposition} \label{prop:climb}
Let $d > d_0 \vee 8$, 
let $\phi \in (0, d-2)$ be an integer, and let $\theta = 2$. 
If $\phi + \theta < \frac 3 2 d -  4$, then
\begin{equation}
\text{The $\phi$-th moment of $\Pi_\lambda$ is good}
\quad \implies \quad
\text{The $(\phi+\theta)$-th moment of $\Pi_\lambda$ is good.}
\end{equation}
Moreover, if $\phi < \half d - 2$, 
then the $(\phi+\theta)$-th moment of $\Pi_\lambda$ is good with $p_\phitheta = 1$.
\end{proposition}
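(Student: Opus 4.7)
The plan is to bound $\abs x^{\phi+\theta}\Pi_\lambda(x)$ diagram-by-diagram through the lace expansion $\Pi_\lambda = \sum_{n\ge 0}(-1)^n\Pi^{(n)}_\lambda$. For a given diagram I will identify a backbone path $0 = u_0, u_1, \ldots, u_m = x$ of connecting blocks with increments $y_i = u_i - u_{i-1}$, so that $x = \sum_i y_i$ and $\abs x \le \sum_i \abs{y_i}$. Since $\phi$ is a non-negative integer, this yields the distribution
\begin{equation*}
\abs x^{\phi+\theta} = \abs x^\phi \abs x^\theta \le C_m \sum_{i,j} \abs{y_i}^\phi \abs{y_j}^\theta ,
\end{equation*}
splitting the moment weight into a sum over pairs of edges (or blocks) of the diagram.

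For the off-diagonal part $i \ne j$, the $\phi$-weight sits on one block and the $\theta = 2$ weight on another. The $\phi$-weighted factor will be controlled by the inductive hypothesis (together with the analogous $\phi$-th moment bound on $\tau_\lambda$, which follows from the Ornstein--Zernike equation $\lambda\tau_\lambda = J_\lambda + J_\lambda * \lambda\tau_\lambda$ and the triangle condition), while the $\theta$-weighted factor is controlled by Assumption~\ref{ass:adj}\eqref{eq:adj_hyp} for $\adj$-edges and by the standard second-moment bound on $\tau_\lambda$ that comes with the convergence of the lace expansion. Unweighted blocks are handled via the triangle diagram together with the square diagram (the latter available since $d > 8$). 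All these pieces are combined through Young's convolution inequality with indices tuned so that the product lies in $L^{p_{\phi+\theta}}\cap L^2 \cap L^\infty$; the hypothesis $\phi + \theta < \tfrac 3 2 d - 4$ is precisely what ensures that every such allocation of Young indices remains in $[1,\infty]$.

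The diagonal case $i = j$ will be the main obstacle. There the combined weight $\abs{y_i}^{\phi+\theta}$ rests on a single block, and I plan to iterate the splitting inside that block until the weight is localised either on a single $\adj$-edge — where $\abs y^{\phi+\theta}\adj(y)\in L^{p'}\cap L^2 \cap L^\infty$ for a suitable $p'$ follows by interpolating Assumption~\ref{ass:adj}\eqref{eq:adj_hyp} with \eqref{eq:adj_d-2} using $\phi+\theta \le d-2$ — or on a single $\tau_\lambda$-edge, where I apply the Ornstein--Zernike identity once more to transfer the weight onto a factor of $J_\lambda = \lambda(\adj + \Pi_\lambda)$, reducing either to the $\adj$ case or to an inductive use of the $\phi$-th moment of $\Pi_\lambda$. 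For the stronger conclusion $p_{\phi+\theta} = 1$ when $\phi < \tfrac d 2 - 2$, I will carry all Young indices equal to $1$ throughout: the strong $p_\phi = 1$ conclusion propagates inductively from the previous stage (and from the base case supplied by Definition~\ref{def:good} at $\phi \le 2$), and $\|f_1 * \cdots * f_m\|_1 \le \prod\|f_i\|_1$ then gives $\abs x^{\phi+\theta}\Pi_\lambda \in L^1$ directly. The delicate bookkeeping in the diagonal step, together with verifying that the sum over $n$ converges thanks to the smallness of each extra block coming from the triangle or square diagram, will be the main technical burden.
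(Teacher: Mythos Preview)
Your outline has the right high-level shape --- distribute the weight over the diagram and glue pieces via Young/H\"older --- but it skips the step that carries all the content: passing from good moments of $\Pi_\lambda$ to good moments of $\tau_\lambda$. Concretely, to estimate any weighted diagram you need quantities like $\|\,|y|^a\tilde\tau_\lambda\|_{L^s}$ for $a$ up to $\phi$ and for a specific range of $s$ (this is $\bar E_s^{(a)}$ in the paper). You assert this ``follows from the Ornstein--Zernike equation and the triangle condition,'' but the OZ equation is implicit in $\tau_\lambda$; expanding it as $\lambda\tau_\lambda = \sum_{k\ge 1} J_\lambda^{*k}$ gives a series that does not converge in $L^1$ near criticality (since $\hat J_{\lambda_c}(0)=1$), so you cannot simply push the weight onto a single $J_\lambda$ factor and sum. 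Your ``diagonal'' plan --- apply OZ once more to transfer the weight off a $\tau_\lambda$-edge --- runs straight into this infinite recursion with no closing mechanism.

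The paper closes this gap by Fourier analysis (Lemma~\ref{lem:simple}, proved via Lemma~\ref{lem:fourier}). From $\hat\tau_{\lambda,+} = \hat\adj\,\hat J_\lambda/(1-\hat J_\lambda)$, derivatives of order $a$ are bounded in $L^r$ using the product/quotient rule, the infrared bound for $(1-\hat J_\lambda)^{-1}$, and $L^q$ bounds on $\nabla^\gamma\hat J_\lambda$ obtained (through Hausdorff--Young and interpolation) from the good $\phi$-th moment of $\Pi_\lambda$. Hausdorff--Young then converts back to the desired $\|\,|x|^a\tilde\tau_\lambda\|_{L^s}$. This is also where the hypothesis $\phi+\theta<\tfrac32 d-4$ actually bites: it is the condition under which the resulting H\"older allocation for the $L^p$ bubble $\bar W_p^{(\phi,\theta)}$ reaches some $p<p^*_{\phi+\theta}$. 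A secondary point: the paper splits $|x|^\phi$ and $|x|^\theta$ along two \emph{edge-disjoint} paths (top and bottom of the lace diagram), so your ``diagonal'' case never arises; the worst that happens is both weights landing in the same $\psi$-segment on different edges, which is exactly what the $L^p$ bubble $\bar W_p^{(\phi,\theta)}$ and the $L^p$ martini $\bar H_p^{(\phi,\theta)}$ are designed to absorb.
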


The following lemma is a slight extension of results of \cite{HHLM26} that we will prove in Section~\ref{sec:lace}.
It implies that the $\phi$-th moment of $\Pi_\lam$ is good when $\phi \in [0,2]$ and provides the base case of the induction.

\begin{lemma} \label{lem:Pi_base}
Let $d > d_0$. Then
\begin{equation} \label{eq:Pi_base}
\sup_{\lambda \in [\half \lambda_c, \lambda_c ] }
\max\Bigl\{
\norm{ \Pi_\lambda(x) }_{L^1 \cap L^\infty }  ,\,
\bignorm{ \abs x^2 \Pi_\lambda(x) }_{L^1 \cap L^\infty }  ,\,
\norm{ \tau_\lambda }_{L^2} 
\Bigl\} < \infty .
\end{equation}
We also have $\hat J\crit(0) = 1$ and the following \emph{infrared bound}:
there is a constant $K_{\rm IR} > 0$ such that
\begin{equation} \label{eq:J_infrared}
\hat J_\lambda (0) \le 1,  \qquad
\hat J_\lambda(0) - \hat J_\lambda(k) \ge K_{\rm IR} (\abs k^2 \wedge 1)
\end{equation}
uniformly in $k \in \Rd$ and in $\lambda \in [\half \lambda_c, \lambda_c]$.
\end{lemma}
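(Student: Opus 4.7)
}
I would begin by noting that most of the stated claims are already established (or directly read off) from the lace expansion analysis of \cite{HHLM19}, and my task is to package and slightly extend those outputs. The identity $\hat J_{\lambda_c}(0) = 1$ follows from Fourier-transforming the OZ relation \eqref{eq:OZ} at $k = 0$, yielding $\lambda \hat{\tau}_\lambda(0) = \hat J_\lambda(0)/(1 - \hat J_\lambda(0))$; the susceptibility $\hat\tau_\lambda(0)$ diverges as $\lambda \uparrow \lambda_c$ (by standard mean-field lower bounds on $\chi$), forcing $\hat J_{\lambda_c}(0) = 1$. The infrared bound \eqref{eq:J_infrared} and the $L^1$ bound $\sup_\lambda \norm{\Pi_\lambda}_1 < \infty$ are exactly the outputs of the bootstrap argument of \cite{HHLM19} valid for $d > d_0$. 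That bootstrap simultaneously delivers $\sup_\lambda \norm{|x|^2 \Pi_\lambda}_1 < \infty$, which feeds into the second-order Taylor estimate $\hat J_\lambda(0) - \hat J_\lambda(k) \ge \lambda(\hat\adj(0) - \hat\adj(k)) - \tfrac12\lambda|k|^2 \norm{|x|^2 \Pi_\lambda}_1$; combined with Assumption~\ref{ass:adj}(ii), this yields \eqref{eq:J_infrared} once $d$ is large enough that the negative term is dominated.

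To upgrade the $L^1$ bounds to $L^1 \cap L^\infty$, I would use the trivial pointwise inequality $\tau_\lambda(y) \le 1$. Each of the diagrammatic bounds on $|\Pi_\lambda^{(n)}(x)|$ from \cite{HHLM19} contains a distinguished $\tau_\lambda$-factor that carries the full displacement across the diagram (joining $0$ to $x$ through a backbone path). Replacing this single factor by $1$ leaves a convolution of the remaining factors that is uniformly bounded by the standard triangle condition estimates, yielding $\norm{\Pi_\lambda}_\infty < \infty$ uniformly in $\lambda$. For the weighted $L^\infty$ bound, I would first distribute $|x|^2 \le m\sum_{i=1}^m |y_i|^2$ across the $m$ displacements of a backbone path in the diagram; on each resulting summand, the extra $|y_i|^2$ weight falls on a single edge, which contributes a finite amount either from Assumption~\ref{ass:adj}(i) (if it is an $\adj$-edge, giving $\norm{|y|^2 \adj}_{1}$) or from the already-established $\norm{|y|^2 \Pi_\lambda}_1 < \infty$, after extracting a $\tau_\lambda \le 1$ factor for the $L^\infty$ direction.

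For $\norm{\tau_\lambda}_{L^2}$, I would apply Plancherel and the Fourier-transformed OZ relation $\hat\tau_\lambda(k) = \hat J_\lambda(k)/(\lambda(1 - \hat J_\lambda(k)))$. The infrared bound \eqref{eq:J_infrared} together with $\hat J_\lambda \in L^\infty$ (which itself follows from $\norm{\adj}_1 + \norm{\Pi_\lambda}_1 < \infty$) gives $|\hat\tau_\lambda(k)|^2 \lesssim (|k|^2 \wedge 1)^{-2}$, which is integrable on $\Rd$ precisely when $d > 4$.

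The main obstacle in this proof is already handled by \cite{HHLM19}: namely, the simultaneous bootstrap between the infrared bound on $J_\lambda$ and the second-moment bound $\norm{|x|^2 \Pi_\lambda}_1 < \infty$, which can only be closed at sufficiently large $d$. Beyond this, the $L^\infty$ extensions claimed in Lemma~\ref{lem:Pi_base} are routine consequences of $\tau_\lambda \le 1$ combined with the same diagrammatic inputs, and the $L^2$ bound on $\tau_\lambda$ is a straightforward Fourier-side computation.
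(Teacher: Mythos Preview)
Your plan is largely on target for the parts quoted from \cite{HHLM19} (in particular $\|\Pi_\lambda\|_{L^1\cap L^\infty}$ is already there, so your separate $L^\infty$ argument is unnecessary), but there is a genuine gap in your argument for $\||x|^2\Pi_\lambda\|_\infty$. The diagrammatic bounds on $\Pi_\lambda^{(n)}$ are built from $\tau_\lambda$-edges and $\varphi$-edges, not from $\Pi_\lambda$-edges. After you distribute $|x|^2$ along a backbone path, the weight lands on some $|y_i|^2\tau_\lambda(y_i)$, and what you actually need is $\bar E^{(2)}:=\sup_\lambda\||y|^2\tilde\tau_\lambda(y)\|_\infty$, not $\||y|^2\Pi_\lambda\|_1$. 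This is not a consequence of $\tau_\lambda\le 1$ or of the $L^1$ moment bounds you have in hand. The paper establishes $\bar E^{(2)}<\infty$ separately (Lemma~\ref{lem:base}) via the cosine-splitting estimate $\|(1-\cos(k\cdot x))\tau_{\lambda,+}(x)\|_\infty\lesssim \hat\varphi(0)-\hat\varphi(k)\lesssim|k|^2$ drawn from the bootstrap of \cite{HHLM19}, followed by a $|k|\to 0$ limit along coordinate directions. With $\bar E^{(2)}$ available, the paper then invokes its general $L^p$ diagrammatic estimate (Proposition~\ref{prop:diagram}) at $p=\infty$, $\phi=2$, $\theta=0$, checking the remaining hypotheses $\bar W_\infty^{(2,0)}\le \bar E^{(2)}\bar E^{(0)}$ and $\bar H_\infty^{(2,0)}<\infty$ via Lemma~\ref{lem:H_infty}.

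A smaller issue: your Taylor-estimate route to the infrared bound, $\hat J_\lambda(0)-\hat J_\lambda(k)\ge \lambda(\hat\varphi(0)-\hat\varphi(k))-\tfrac12\lambda|k|^2\||x|^2\Pi_\lambda\|_1$, only works for bounded $|k|$; for large $|k|$ the subtracted term diverges and swamps the contribution $K_{\rm IR,\varphi}$ from Assumption~\ref{ass:adj}(ii). The paper instead uses the sharper cosine-weighted output $|\hat\Pi_\lambda(0)-\hat\Pi_\lambda(k)|\le O(\beta)[\hat\varphi(0)-\hat\varphi(k)]$ from \cite{HHLM19}, which gives the lower bound uniformly in $k$.
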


Using the lemma, we can also get a Fourier integral representation for $\tau\crit(x)$, as follows. 
We first use the lace expansion equation \eqref{eq:OZ} to get, for $H_\lam = \lam \tau_\lam - J_\lam$, that
\begin{equation}
(\delta - J_\lam) * H_\lam 
= J_\lam - (J_\lam - J_\lam * J_\lam)
= J_\lam * J_\lam .
\end{equation}
We then take the $L^2$ Fourier transform, solve for $\hat H_\lam$, and then take the inverse Fourier transform.
Indeed, by Lemma~\ref{lem:Pi_base}, 
we have $J_\lam = \lam(\adj + \Pi_\lam) \in L^1 \cap L^\infty \subset L^2$
and $H_\lam = \lam \tau_\lam - J_\lam \in L^2$, so it is valid to take the Fourier transform. 
At $\lambda = \lambda_c$, this produces the Fourier integral
\begin{equation} \label{eq:Hint}
\lambda_c \tau\crit(x) = J\crit(x) + 
\int_\Rd \frac{ \hat J\crit(k)^2 }{ 1 - \hat J\crit(k) }  e^{-ik\cdot x} \frac{ \D k }{ (2\pi)^d } ,
\end{equation}
which is well-defined thanks to the infrared bound \eqref{eq:J_infrared}.

\begin{proof}[Proof of Theorem~\ref{thm:main}]
We take the $d_0$ produced by the lace expansion of \cite{HHLM26}, and we increase it if necessary to make $d_0 \ge 8$.
We use the Fourier integral representation \eqref{eq:Hint}. 
By Theorem~1.3 of \cite{Liu25Rd} (with $J = g = J\crit$ and $G = \lambda_c \tau\crit)$, if we can verify
\begin{enumerate} [label=(\roman*)]
\item
$J\crit(x), \abs x^2 J\crit(x) \in L^1 \cap L^2 (\Rd)$,

\item
$\abs x^{2+\eps} J\crit(x) \in L^1(\Rd)$ for some $\eps > 0$,

\item
$\abs x^{d-2} J\crit(x) \in L^p \cap L^2(\Rd)$ for some $1 \le p < d/4$,

\item
$\hat J\crit(0) = 1$ and the infrared bound \eqref{eq:J_infrared} at $\lam = \lambda_c$, and

\item
$J\crit(x) = o(\abs x^{-(d-2)})$ as $\abs x\to \infty$,
\end{enumerate}
then the asymptotic behaviour of the Fourier integral is given by
\begin{equation} \label{eq:Sigma}
\lambda_c\tau\crit (x) \sim \frac{ a_d \hat J\crit(0)}{ \sqrt{\det \Sigma} } \frac 1 { ( x \cdot \Sigma\inv x )^{(d-2)/2} } 
	\qquad (\abs x \to \infty),
\end{equation}
where $\Sigma$ is the diagonal matrix $\Sigma = \mathrm{diag} ( \int_\Rd x_i^2 J\crit(x) \D x : 1 \le i \le d )$ which is positive definite by the infrared bound \eqref{eq:J_infrared}. 
This rearranges to the desired result. 
Hypotheses (i) and (iv) directly follow from \eqref{eq:adj_hyp} and Lemma~\ref{lem:Pi_base}.
We verify (ii),(iii), and (v) using Proposition~\ref{prop:climb}, as follows.

Recall $\theta = 2$. Let
\begin{equation}
\phi_0 = \begin{cases} 
2	&(d \text{ is odd}) \\
1	&(d \text{ is even}),
\end{cases}
\qquad \phi_{i+1} = \phi_i + \theta
\quad (i \ge 0) .
\end{equation}
Since Lemma~\ref{lem:Pi_base} implies that the $\phi_0$-th moment of $\Pi_\lam$ is good, we can use Proposition~\ref{prop:climb} repeatedly, as long as $\phi_i < d-2$ and $\phi_i + \theta < \frac 3 2 d - 4 = (d-2) + (\half d - 2 )$.
The first use of the proposition gives
\begin{equation} 
\sup_{\lambda \in [\half \lambda_c, \lambda_c ) }
\bignorm{ \abs x^{\phi_0 + 2} \Pi_\lambda(x) }
	_{L^1 \cap L^\infty } 
	< \infty ,
\end{equation}
since $\phi_0 \le 2 < \half d - 2 $ in dimensions $d > 8$.
As $\Pi_\lambda \to \Pi\crit$ pointwise as $\lambda \to \lambda_c^-$ by \cite[Corollary~6.1]{HHLM26}, Fatou's lemma implies that
\begin{equation} 
\bignorm{ \abs x^{\phi_0 + 2} \Pi\crit(x) }_{L^1 \cap L^\infty}
\le \sup_{\lambda \in [\half \lambda_c, \lambda_c ) }
\bignorm{ \abs x^{\phi_0 + 2} \Pi_\lambda(x) }
	_{L^1 \cap L^\infty } 
	< \infty .
\end{equation}
The $L^1$ part of the estimate, together with \eqref{eq:adj_2+eps} in Assumption~\ref{ass:adj}, gives hypothesis~(ii).

By the choice of $\phi_0$, the last use of Proposition~\ref{prop:climb} always yields that the $(d-1)$-th moment of $\Pi_\lam$ is good.
By Definition~\ref{def:good}, this means that there exists some $p'  \in  [1, d/3)$ for which 
\begin{equation} 
\bignorm{ \abs x^{d-1} \Pi\crit(x) }_{L^{p'} \cap L^2 \cap L^\infty}
\le \sup_{\lambda \in [\half \lambda_c, \lambda_c ) }
\bignorm{ \abs x^{d - 1} \Pi_\lambda(x) }
	_{L^{p'} \cap L^2 \cap L^\infty}
	< \infty .
\end{equation}
The $L^\infty$ part of the estimate, together with Assumption~\ref{ass:adj}(i), gives
\begin{equation}
J\crit(x) = \lambda_c ( \adj(x) + \Pi\crit(x) )
= o \bigg(\frac 1 {\abs x^{d-2}}\bigg) + O\bigg(\frac 1 {\abs x^{d-1}}\bigg)
= o \bigg(\frac 1 {\abs x^{d-2}}\bigg) ,
\end{equation}
which is hypothesis~(v).
Also, using $\abs x^2 \Pi\crit(x) \in L^1 \cap L^\infty$ from Lemma~\ref{lem:Pi_base} and the decomposition
\begin{equation}
\abs x^{d-2} \Pi\crit(x) =  \big( \abs x^{ 2 } \Pi\crit(x) \big)^{ \frac{1}{d-3} } 
	\big( \abs x^{ d-1 } \Pi\crit(x) \big)^{ \frac{d-4}{d-3} } ,
\end{equation}
H\"older's inequality implies that
(using $\frac 1 {p'} > \frac 3 d = 1 - \frac{d-3}d$)
\begin{equation}
\abs x^{d-2} \Pi\crit(x) \in L^p \cap L^2 \cap L^\infty
\quad \text{with}\quad
\frac 1 p = \frac{ 1 }{ d-3 } + \frac{ d-4 }{(d-3) p' }
> \frac 4 d.
\end{equation}
Together with \eqref{eq:adj_d-2} in Assumption~\ref{ass:adj}, this gives hypothesis~(iii) with the smaller $p$.

This verifies all of (i)--(v) and completes the proof.
\end{proof}

\subsection{Organisation}

We have reduced the proof of Theorem~\ref{thm:main} to proving Proposition~\ref{prop:climb} and Lemma~\ref{lem:Pi_base}.
In Section~\ref{sec:lace}, 
we state our main $L^p$ diagrammatic estimate on the lace function $\Pi_\lam$ (Proposition~\ref{prop:diagram}), and we use it in conjunction with previous lace expansion results to prove Lemma~\ref{lem:Pi_base}.
The proof of Proposition~\ref{prop:diagram}, which differs from standard diagrammatic estimates only in the use of a few $L^p$ diagrams, is presented in Section~\ref{sec:diagram} and Appendix~\ref{sec:DiagramBounds}.
In Section~\ref{sec:simple},
we prove Proposition~\ref{prop:climb} using Proposition~\ref{prop:diagram}, by bounding the hypothesised $L^p$ diagrams using basic Fourier analysis; this part is based on the method of \cite{LS24a} and its extension to $\Rd$ by \cite{Liu25Rd}.

\section{Diagrammatic estimates}
\label{sec:lace}

\subsection{Diagrams}

We first define the quantities involved in the upper bound on $\Pi_\lam$.
For this, it is convenient to take $\lambda \in [\half \lam_c, \lam_c)$, so that all diagrams are finite. All estimates will be uniform in $\lambda$, so they also hold at $\lambda_c$ by a limiting argument, as in the proof of Theorem~\ref{thm:main}.

We recall, from \cite[Observation~4.3]{HHLM26}, that an upper bound of $\tau_\lam(x)$ is given by
\begin{equation} \label{eq:def_tilde_tau} 
\tilde \tau_\lambda(x) = \adj(x) + \tau_\plus(x),
\quad \text{where} \quad
\tau_\plus(x) = \lambda( \adj * \tau_\lambda )(x) .
\end{equation} 
We use superscripts on a function to denote multiplying (weighting) the function by a power of $\abs x$, e.g., $\tilde\tau_\lambda\supa(x) = \abs x^a \tilde\tau_\lambda(x)$. 
For any $a,b \ge 0$, any exponent $p \in [1,\infty]$, and any $u,v\in \Rd$, we define
\begin{align}
\label{eq:def_Wp}
W_p \supk{a,b} (u)  
	&= \bignorm{  \tilde \tau_\lambda\supa (x) 
		\tilde \tau_\lambda\supb ( u + x) }_{ L^p_x } ,
	\\
T \supb (u) 
	&= \lambda^2 ( \tau_\lambda\supb * \tlam * \tlam )(u) ,
	\\
\label{eq:def_Y}
Y \supa (x,y) &= \int_{\R^{3d}}
	\tau_\lam (z_1) \tau_\lam \supa (z_2 - z_1) \tau_\lam (x - z_2)
	 \tau_\lam (z_3 - z_1)    \nl
	&\qquad\qquad\qquad
	 \times \tau_\lam (z_2 - z_3) \tau_\lam (y - z_3)
	\lambda^3 \dd z_1 \dd z_2 \dd z_3  , 
	\\
\label{eq:def_H}
H_p \supab (u,v) &=  \biggnorm{ \int_\Rd
	Y \supa (x,y)
	\tau_\lam (y - u) \tau_\lam \supb ( v + x - y) 
	\lambda \dd y  }_{L^p_x} ,
\end{align}
where $L^p_x$ indicates the $L^p$ norm with respect to $x$.
All of $W_p \supab, T\supb, Y\supa, H_p \supab$ depend on $\lambda$ implicitly, and we write
\begin{equation}
\begin{alignedat}2
\bar E\supa &= 
\sup_{\lambda \in [\half \lambda_c, \lambda_c ) }
\bignorm{ \tilde \tau_\lam\supa }_\infty  , 
\qquad
\bar W_p\supab &&= 
\sup_{\lambda \in [\half \lambda_c, \lambda_c ) }
\bignorm{ W_p \supab }_\infty  , 
\\
\bar T\supb &= 
\sup_{\lambda \in [\half \lambda_c, \lambda_c ) }
\bignorm{ T \supb }_\infty  , 
\qquad
\bar H_p\supab &&= 
\sup_{\lambda \in [\half \lambda_c, \lambda_c ) }
\bignorm{ H_p \supab(u,v) }_{L^\infty_u L^\infty_v} 
\end{alignedat}
\end{equation}
for their supremum.
When $p=1$, 
\begin{equation} \label{eq:bubble}
W_1 \supk{a,b} (u)  
	= \bignorm{  \tilde \tau_\lambda\supa (x) 
		\tilde \tau_\lambda\supb (u+x) }_{ L^1_x } 
= ( \tilde \tau_\lambda\supa * \tilde \tau_\lambda\supb )(u) 
\end{equation}
reduces to the usual (weighted) open bubble diagram.
Similarly, $H_1\supab$ reduces to the usual (weighted) martini diagram.
The $p$-version of these quantities allows us to estimate the $L^p$ norm of $\Pi_\lam$.

\begin{proposition} \label{prop:diagram}
Let $d > d_0$, $\phi, \theta\ge 0$, and $p\in [1,\infty]$.
If $\norm{ \abs x^\theta \adj(x) }_1,
\bar E \supphi, \bar E \suptheta, 
\bar W_p \supphitheta, 
\bar T \suptheta, 
\bar H_p \supphitheta$, and $\bar H_p \supk{\phi,0}$ are all finite, 
then
\begin{equation}
\sup_{\lambda \in [\half \lambda_c, \lambda_c ) }
\bignorm{ \abs x^{\phi+\theta} \Pi_\lam(x) }_{p} 
< \infty.
\end{equation}
\end{proposition}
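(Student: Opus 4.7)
The plan is to establish Proposition~\ref{prop:diagram} by a weighted diagrammatic estimate on the terms of the lace expansion, adapted to $L^p$ norms rather than the $L^1$ norms traditional for the random connection model. First, I would invoke from \cite{HHLM19} the representation $\Pi_\lambda(x) = \sum_{n \ge 0} (-1)^n \Pi_\lambda^{(n)}(x)$ and the pointwise upper bound on each $\Pi_\lambda^{(n)}$ by a sum of $n$-ladder diagrams whose edges carry factors $\adj$ or $\tilde \tau_\lambda$ and whose two distinguished vertices are $0$ and $x$. It then suffices to prove that
\begin{equation*}
\sup_{\lambda \in [\lambda_c/2, \lambda_c)} \bignorm{ \abs x^{\phi+\theta} \Pi_\lambda^{(n)} }_p
\end{equation*}
is summable in $n$ under the stated finiteness hypotheses.

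Next, for each such diagram I would distribute the weight $\abs x^{\phi+\theta}$ using the triangle inequality along a path of pivotal vertices $0 = y_0, y_1, \ldots, y_m = x$, via an inequality of the form
\begin{equation*}
\abs x ^{\phi+\theta} \lesssim \sum_{i \le j} \abs{y_i - y_{i-1}}^{\phi}\,\abs{y_j - y_{j-1}}^{\theta} .
\end{equation*}
This lets me assign the $\phi$-weight to one ``inner'' edge (which will be absorbed into a factor of $Y^{(\phi)}$ or a weighted $\tilde\tau_\lambda^{(\phi)}$ inside a bubble) and the $\theta$-weight either to an edge near $x$ (producing a $\tilde\tau_\lambda^{(\theta)}$ boundary factor) or to an $\adj$-edge, in which case the hypothesis $\norm{\abs x^\theta \adj(x)}_1 < \infty$ is needed.

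Third, rather than integrating the entire diagram in $L^1$, I would single out one vertex whose coordinate is the outer $L^p$ variable and apply Minkowski's integral inequality to pass the $L^p_x$ norm through the remaining integrations. The compound quantities $W_p\supphitheta$ and $H_p\supphitheta$ in \eqref{eq:def_Wp} and \eqref{eq:def_H} are designed precisely so that after this swap each resulting diagram is bounded by a product of the hypothesised quantities $\bar E\supphi$, $\bar E\suptheta$, $\bar W_p\supphitheta$, $\bar T\suptheta$, $\bar H_p\supphitheta$, $\bar H_p\supk{\phi,0}$, and $\norm{\abs x^\theta \adj}_1$, together with unweighted factors $\norm{\tilde\tau_\lambda}_1$ and $\bar T\supk{0,0}$ that are uniformly bounded for $d > d_0$ by the known lace expansion estimates of \cite{HHLM19}. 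The cases $n=0,1,2$ require individual inspection of the handful of diagrammatic shapes drawn in the macros above; for $n \ge 3$, each additional pivotal segment contributes an extra unweighted factor $\bar T\supk{0,0}$, which can be made arbitrarily small by taking $d_0$ large.

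The main obstacle will be the combinatorial bookkeeping: for each of the distinct diagrammatic topologies appearing in the upper bound on $\Pi_\lambda^{(n)}$, I must exhibit a choice of pivotal path along which the $\phi$- and $\theta$-weights can be placed so that Minkowski's inequality produces one of the tabulated diagrammatic factors, while also choosing the right vertex as the $L^p$ variable. This is the same case analysis already treated in the $L^1$ setting of \cite{HHLM19, FH17}, but requires a re-examination of each case to confirm that the $L^p$ version goes through with the quantities defined in \eqref{eq:def_Wp}--\eqref{eq:def_H}. Once the base cases are settled and the geometric factor $\bar T\supk{0,0}$ is made small by choosing $d > d_0$, summability in $n$ and uniformity in $\lambda \in [\lambda_c/2, \lambda_c)$ both follow. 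The detailed diagram-by-diagram verification will be carried out in Section~\ref{sec:diagram} and Appendix~\ref{sec:DiagramBounds}.
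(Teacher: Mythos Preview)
Your high-level plan---decompose $\Pi_\lambda = \sum_n (-1)^n \Pi_\lambda^{(n)}$, bound each term diagrammatically, push the $L^p_x$ norm inside via Minkowski, and sum using a small per-segment factor in high dimensions---matches the paper's. Two specific points need correction, however.

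First, a genuine error: $\norm{\tilde\tau_\lambda}_1$ is \emph{not} uniformly bounded on $[\lambda_c/2,\lambda_c)$. Since $\tilde\tau_\lambda \ge \tau_\lambda$ pointwise, $\norm{\tilde\tau_\lambda}_1 \ge \norm{\tau_\lambda}_1 = \chi(\lambda)$, the susceptibility, which diverges as $\lambda \uparrow \lambda_c$. No diagram bound may contain this factor. The paper never uses $\norm{\tilde\tau_\lambda}_1$; the only $L^1$ quantities that appear are $\norm{\adj}_1$, $\norm{\abs x^\theta \adj}_1$, and convolutions of two or more $\tau_\lambda$'s taken in $L^\infty$ (bubbles, triangles, squares). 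You must make sure every unweighted piece closes into such a diagram rather than a bare $L^1$ edge.

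Second, your weight-splitting differs from the paper's in a way that matters. You propose to distribute both $\abs x^\phi$ and $\abs x^\theta$ along a \emph{single} pivotal path $0=y_0,\dots,y_m=x$. The paper instead exploits that every $\Pi_\lambda^{(n)}$ diagram contains \emph{two edge-disjoint} paths from $0$ to $x$ (the ``top'' and ``bottom'' of the ladder), and splits $\abs x^\phi$ along one side and $\abs x^\theta$ along the other via Lemma~\ref{lem:split_power}. This is precisely what makes the hypothesised quantities $\bar W_p^{(\phi,\theta)}$ and $\bar H_p^{(\phi,\theta)}$ appear: in each, the $\phi$- and $\theta$-weights sit on \emph{opposite} edges of a bubble or martini. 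If both weights land on the same path, the natural diagrams that emerge involve $\bar E^{(\phi+\theta)}$ or $\bar W_p^{(\phi+\theta,0)}$, which are not among the hypotheses and whose finiteness is exactly what the induction of Proposition~\ref{prop:climb} is trying to establish. So the two-path splitting is not cosmetic; it is what decouples the proposition's hypotheses from its conclusion.
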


The $p=1$, $\theta = 0$ case of the proposition is quite standard, and it is essentially proved in Section~7.3 of \cite{HHLM26}.
The $p=1$, $\theta > 0$ case is first proved by \cite[Lemma~1.8]{Hara08} for percolation on $\Zd$.
We develop the method further to cover all $p \in [1,\infty]$, in Section~\ref{sec:diagram}.

The following elementary lemma is useful in verifying the hypothesis about $\bar H_\infty \supphitheta$ when $p=\infty$. 

\begin{lemma} \label{lem:H_infty}
Let $d\ge1$, $\phi,\theta \ge 0$, and $p=\infty$. Then
\begin{equation}
\bar H_\infty \supphitheta
\le \lambda_c^2 \norm{\tau\crit}_2^2 \bar E\supphi \bar T\supk 0
	\times
	\sup_{\lambda \in [\half \lambda_c, \lambda_c ) }
	\bignorm{ \tlam\suptheta * \tlam }_\infty.
\end{equation}
\end{lemma}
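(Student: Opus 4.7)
The plan is to expand $Y\supphi(x,y)$ via \eqref{eq:def_Y} and integrate the inner vertices $z_1, z_2, z_3$ together with the outer variable $y$ one step at a time, harvesting one factor of the target bound at each step. The argument is essentially mechanical once the order of integration is chosen; the only non-obvious observation is that the inner bubble can be bounded by its value at the origin.

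First I would pull out the weighted edge by $\tau_\lambda\supphi(z_2 - z_1) \le \tilde\tau_\lambda\supphi(z_2 - z_1) \le \bar E\supphi$, where the first inequality uses the pointwise bound $\tau_\lambda \le \tilde\tau_\lambda$ from \cite{HHLM19}. The integrand then contains only unweighted factors of $\tau_\lambda$ besides $\tau_\lambda\suptheta(v+x-y)$.

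Next, the $z_1$ integral produces $\int \tau_\lambda(z_1) \tau_\lambda(z_3 - z_1) \D z_1 = (\tau_\lambda * \tau_\lambda)(z_3)$, which I would bound by its value at the origin, using Cauchy--Schwarz and that $\tau_\lambda$ is even:
\begin{equation}
(\tau_\lambda * \tau_\lambda)(z_3) \le (\tau_\lambda * \tau_\lambda)(0) = \norm{\tau_\lambda}_2^2.
\end{equation}
The $z_2$ integral then gives $(\tau_\lambda * \tau_\lambda)(x - z_3)$. Combining with the remaining factor $\tau_\lambda(y - z_3)$ and integrating over $z_3$ produces $\tau_\lambda^{*3}(x-y)$ (via a change of variables using evenness of $\tau_\lambda$), which I would bound by $\norm{\tau_\lambda^{*3}}_\infty = \bar T\supk 0 / \lambda^2$.

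The final $y$-integral is then $\lambda (\tau_\lambda * \tau_\lambda\suptheta)(v+x-u) \le \lambda \norm{\tau_\lambda\suptheta * \tau_\lambda}_\infty$. Collecting the factors of $\lambda$, the pointwise bound becomes $\lambda^2 \bar E\supphi \norm{\tau_\lambda}_2^2 \bar T\supk 0 \norm{\tau_\lambda\suptheta * \tau_\lambda}_\infty$, which is uniform in $x, u, v$. Taking the supremum over $\lambda \in [\half\lambda_c, \lambda_c)$ and using the monotonicity $\norm{\tau_\lambda}_2 \le \norm{\tau_{\lambda_c}}_2$ (from the thinning coupling of the Poisson vertex set) yields the stated bound.
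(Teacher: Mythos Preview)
Your proof is correct and essentially the same as the paper's. The only cosmetic difference is the order in which you peel off the bubble and the triangle: you integrate $z_1$ first and bound $(\tau_\lambda*\tau_\lambda)(z_3)\le\norm{\tau_\lambda}_2^2$, then assemble $z_2,z_3$ into $\tau_\lambda^{*3}(x-y)$, whereas the paper integrates $z_2$ first and bounds $(\tau_\lambda*\tau_\lambda)(x-z_3)\le\norm{\tau_\lambda}_2^2$, then assembles $z_1,z_3$ into $\tau_\lambda^{*3}(y)$; both routes give the identical bound $Y^{(\phi)}(x,y)\le\lambda\bar E^{(\phi)}\norm{\tau_\lambda}_2^2\bar T^{(0)}$, after which the final $y$-integral and the $\lambda$-accounting agree.
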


\begin{proof}
We write $a = \phi$ and $b = \theta$.
We first bound $Y \supa(x,y)$ in \eqref{eq:def_Y}.
In the definition of $Y \supa$, we bound $ \tau_\lam \supa (z_2 - z_1)$ by its sup norm, and we write the remaining integral over $z_2$ as a convolution, to get
\begin{align}
Y \supa (x,y) &\le  \bignorm{  \tau_\lam \supa }_\infty
	\int_{\R^{2d}}
	[ \lambda ( \tau_\lam * \tau_\lam ) (x - z_3) ]
	\tau_\lam (z_1)   \tau_\lam (z_3 - z_1)    \tau_\lam (y - z_3)
	\lambda^2 dz_1 dz_3   \nl
&\le  \bignorm{  \tau_\lam \supa }_\infty
	\lambda \norm{  \tau_\lam * \tau_\lam }_\infty
	\lam^2 (\tlam*\tlam*\tlam)(y) \nl
&\le \lambda \bar E \supa  \norm{  \tau_\lambda }_2^2 \bar T  \supk 0 .
\end{align}
Using this bound in the definition of $H_\infty \supab$ in \eqref{eq:def_H}, we get
\begin{align}
H_\infty \supab (u,v) 
&\le  \lambda \norm{  \tau_\lambda }_2^2  \bar E \supa \bar T  \supk 0
\biggnorm{ \int_\Rd
	\tau_\lam (y - u) \tau_\lam \supb ( v + x - y) 
	\lambda \dd y  }_{L^\infty_x} 	\nl
&= \lambda \norm{  \tau_\lambda }_2^2  \bar E \supa \bar T  \supk 0
	\lambda \bignorm{ (\tlam\supb * \tlam)(v+x-u) }_{L^\infty_x}.
\end{align}
Taking the supremum over $u,v,\lambda$ then gives the desired result.
\end{proof}

\subsection{Proof of Lemma~\ref{lem:Pi_base}}

We begin by establishing the finiteness of a few simple diagrams.
This slightly extends the results of \cite{HHLM26}.

\begin{lemma} \label{lem:base}
Let $d > d_0$.
Then $\bar E \supk 0, \bar E \supk 2, \bar T\supk0$, and $\norm{\tau\crit}_2$ are all finite.
\end{lemma}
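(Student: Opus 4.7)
The plan is to split the four quantities into two groups. The first group, $\|\tau_{\lambda_c}\|_2$ and $\bar T\supk 0$, comes essentially from \cite{HHLM19}. For $\|\tau_{\lambda_c}\|_2$, I would combine the Ornstein--Zernike equation \eqref{eq:OZ} with the infrared bound \eqref{eq:J_infrared} to obtain $\hat\tau_{\lambda_c}(k) \lesssim (|k|^2 \wedge 1)^{-1}$, which is square-integrable in $d > 4$, and then invoke Plancherel. For $\bar T\supk 0$, the triangle integral $\lambda^2 (\tau_\lambda^{*3})(u)$ is bounded pointwise in $u$ using the standard pointwise decay $\tau_\lambda(y) \le C(1+|y|)^{-(d-2)}$ that is part of the convergent lace expansion in \cite{HHLM19}; in $d > 6$ this yields uniform finiteness in $\lambda \in [\half\lambda_c, \lambda_c]$ and $u$.

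The second group consists of the two weighted $L^\infty$ bounds. For $\bar E\supk 0$, since $\adj, \tau_\lambda \le 1$ pointwise (both being probabilities) and $\|\adj\|_1 = 1$, I would get $\tilde\tau_\lambda(x) = \adj(x) + \lambda(\adj*\tau_\lambda)(x) \le 1 + \lambda_c$ uniformly in $x$ and $\lambda$. For $\bar E\supk 2$, decompose $|x|^2 \tilde\tau_\lambda(x) = |x|^2\adj(x) + \lambda|x|^2(\adj*\tau_\lambda)(x)$. The first summand is uniformly bounded since Assumption~\ref{ass:adj}(i) provides $|x|^{d-2}\adj(x) \in L^\infty$, which combined with $\adj \le 1$ gives $\adj(x) \le C(1 \wedge |x|^{-(d-2)})$ and hence $|x|^2\adj(x) \le C'$ for $d \ge 4$. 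For the convolution term, apply $|x|^2 \le 2|y|^2 + 2|x-y|^2$ and Young's inequality:
\begin{equation*}
|x|^2(\adj*\tau_\lambda)(x) \le 2\bigl\||y|^2\adj\bigr\|_1 \|\tau_\lambda\|_\infty + 2\|\adj\|_1 \bigl\||y|^2\tau_\lambda\bigr\|_\infty.
\end{equation*}
Here $\||y|^2\adj\|_1 < \infty$ from Assumption~\ref{ass:adj}(i), while $\|\adj\|_1 = 1$ and $\|\tau_\lambda\|_\infty \le 1$ are trivial.

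The main obstacle is thus the remaining uniform pointwise bound $\||y|^2\tau_\lambda\|_\infty < \infty$. I would obtain it from the pointwise decay estimate $\tau_\lambda(y) \le C(1+|y|)^{-(d-2)}$ of \cite{HHLM19}, derived there from the Ornstein--Zernike equation, the $L^1$ control on $\Pi_\lambda$, and a Hara-type Gaussian bound coming out of the infrared estimate; for $d \ge 5$ this immediately yields $|y|^2 \tau_\lambda(y) \le C$ uniformly in $\lambda \in [\half\lambda_c, \lambda_c]$, closing the argument and establishing Lemma~\ref{lem:base}.
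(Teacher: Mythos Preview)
Your treatment of $\bar E^{(0)}$ matches the paper exactly, and your Plancherel argument for $\|\tau_{\lambda_c}\|_2$ is a reasonable alternative to the paper's direct citation of \cite[Lemma~5.4]{HHLM19}. However, the core of your proposal has a genuine circularity.

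You repeatedly invoke a ``standard pointwise decay $\tau_\lambda(y) \le C(1+|y|)^{-(d-2)}$ from \cite{HHLM19}'' --- first to bound $\bar T^{(0)}$, and then, crucially, to control $\||y|^2\tau_\lambda\|_\infty$ in your $\bar E^{(2)}$ argument. No such $x$-space decay is established in \cite{HHLM19}. That paper works entirely in Fourier space: it proves the infrared bound, bounds on $\hat\Pi_\lambda$, and the triangle condition, but never a pointwise bound of the form $\tau_\lambda(x)\lesssim |x|^{-(d-2)}$. Indeed, proving exactly this decay is the main theorem of the present paper, obtained only at the end via the deconvolution theorem of \cite{Liu25Rd} after the full induction on moments of $\Pi_\lambda$. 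Using it inside Lemma~\ref{lem:base}, which sits at the base of that induction, would be circular. (Your reference to a ``Hara-type Gaussian bound'' is also not available here; the paper's own remark notes that Hara's Gaussian lemma would require the extra decay hypothesis $\adj(x)\le C_d(1+|x|)^{-(d+2+\rho)}$, which is not assumed.)

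For $\bar T^{(0)}$ the fix is easy: just cite the triangle condition from \cite{HHLM19} directly. For $\bar E^{(2)}$ the gap is real. The paper avoids any $x$-space decay by working with the weight $1-\cos(k\cdot x)$ instead of $|x|^2$: it proves $\|(1-\cos(k\cdot x))\tau_{\lambda,+}(x)\|_\infty \lesssim \hat\adj(0)-\hat\adj(k)\lesssim |k|^2$ via the cosine-splitting lemma and the Fourier-space diagram bounds \cite[Lemmas~5.4 and 5.7]{HHLM19}, and then passes to $|x|^2$ by sending $|k|\to 0$ along coordinate directions. This is the missing idea --- you need a route to $\||x|^2\tau_{\lambda,+}\|_\infty$ that uses only the $k$-space information actually available from \cite{HHLM19}.
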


\begin{proof}
The hypothesis $d > d_0$ is exactly the convergence of the lace expansion.
Finiteness of the triangle diagram $\bar T\supk0$ is the main result of \cite{HHLM26}.
For $\bar E \supk 0$, we use the definition of $\tilde \tau_\lambda$ in \eqref{eq:def_tilde_tau} and use $\norm \adj_\infty \le 1$ to get
\begin{equation}
\bar E \supk 0  = \norm{ \tilde \tau\crit }_\infty
	\le \norm \adj_\infty + \lambda_c \norm \adj_1 \norm{ \tau\crit}_\infty
	\le 1 + \lambda_c .
\end{equation}

For $\norm{\tau\crit}_2$ and $\bar E \supk 2$, we need to use more detailed estimates from \cite{HHLM26}.
By Proposition~5.10 there,
the convergence of the lace expansion gives a uniform bound $f(\lambda) \le 2$, $\lambda \in [0,\lambda_c)$, on the bootstrap function $f(\lambda)$.
Using $f(\lam) \le 2$, 
the $m=0$, $n=2$ case of \cite[Lemma~5.4]{HHLM26} gives
\begin{equation}
\lambda \norm{ \tau_\lam }_2^2
=  \lambda (\tau_\lam * \tau_\lam)(0)
\lesssim 1
\end{equation}
uniformly in $\lambda \in [0, \lambda_c )$.
Monotone convergence then shows $\lambda_c \norm{\tau\crit}_2^2 < \infty$. 

For $\bar E \supk2$, 
we have $\norm{ \abs x^2 \adj(x) }_\infty < \infty$ by $\norm \adj_\infty \le 1$ and the $L^\infty$ part of \eqref{eq:adj_d-2} in Assumption~\ref{ass:adj}, so it suffices to obtain a uniform estimate on $\norm{ \abs x^2 \tau_\plus(x)}_\infty$.
We claim
\begin{equation} \label{eq:base_claim}
\norm{ ( 1 - \cos(k\cdot x) ) \tau_\plus(x) }_{\infty}  
	\lesssim  \hat \adj(0) - \hat \adj(k) 
	\lesssim \abs k^2
\end{equation}
uniformly in $k\in \Rd$ and in $\lambda \in [0, \lambda_c )$.
Assume this for a moment, 
we fix a direction $\hat k = k / \abs k$ and use the pointwise convergence
\begin{equation} \label{eq:k_limit}
\lim_{\abs k \to 0} \frac { 1 - \cos(k \cdot x) } {\abs k^2}
= \half \abs x^2 ( \hat k \cdot \hat x )^2 ,
	\qquad \hat x = \frac x {\abs x} ,
\end{equation}
to get that
\begin{equation}
\biggnorm{ \half \abs x^2 ( \hat k \cdot \hat x )^2  \tau_\plus(x) }_\infty
\le \liminf_{\abs k \to \infty} \biggnorm{ \frac { 1 - \cos(k \cdot x) } {\abs k^2} \tau_\plus(x) }_\infty
\lesssim 1  .
\end{equation}
We then sum this over all coordinate directions $\hat k = e_j$
and use $\sum^d_{j=1} (e_j \cdot \hat x)^2 = \abs{\hat x}^2 =1$ to get $\norm{ \abs x^2 \tau_\plus(x) }_\infty \lesssim 2d$ uniformly in $\lambda \in [0,\lam_c)$, as desired.

It remains to prove \eqref{eq:base_claim}.
The second inequality follows from Taylor expansion and $\norm{ \abs x^2 \adj(x) }_1 < \infty$ from \eqref{eq:adj_hyp} in Assumption~\ref{ass:adj}.
For the first inequality,
we first use $\tau_\lam \le \tilde \tau_\lam$ from \eqref{eq:def_tilde_tau}, and then we distribute the $1-\cos(k\cdot x)$ factor into the convolutions using the cosine-spliting lemma \cite[Lemma~4.1]{HHLM26}
\begin{equation}
1 - \cos(k\cdot x)
\le N \sum_{j=1}^N ( 1 - \cos (k \cdot x_j) ),
	\qquad x = \sum_{j=1}^N x_j 
\end{equation}
with $N = 2$ or $3$. 
This produces the pointwise bound
\begin{align}
( 1 - \cos(k\cdot x) )  \lambda (\adj * \tau_\lam)(x)
&\le ( 1 - \cos(k\cdot x) ) ( \lambda \adj * \adj
	+ \lambda^2 \adj * \adj  * \tau_\lam ) (x)		\nl
&\le 4 \lam \adj_k * \adj + 6 \lam^2 \adj_k * \adj * \tau_\lam
	+ 3\lam^2 \tau_{\lam,k} * \adj * \adj ,
\end{align}
where $\adj_k(x) = ( 1 - \cos(k\cdot x) ) \adj(x)$
and $\tau_{\lam,k}(x) = ( 1 - \cos(k\cdot x) ) \tau_\lam(x)$.
The first term on the right-hand side is estimated in \cite[Lemma~5.7]{HHLM26}, while the second and the third term are estimated in \cite[Lemma~5.4]{HHLM26} (where they are denoted $6\widetilde W_\lam\supk{1,1}$ and $3W_\lam \supk{2,0}$ respectively).
Each of them is bounded by a constant multiple of $\hat \adj(0) - \hat \adj(k)$, and we obtain \eqref{eq:base_claim}.
This concludes the proof.
\end{proof}

\begin{proof}[Proof of Lemma~\ref{lem:Pi_base} assuming Proposition~\ref{prop:diagram}]
The boundedness of $\norm{\tau_\lam}_2 \le \norm{\tau\crit}_2$ has been established in Lemma~\ref{lem:base}.

The infrared bound on $\hat J_\lam = \lambda (\hat \adj + \hat \Pi_\lam)$ is essentially \cite[Theorem~1.2]{HHLM26}. In the proof of that theorem
(which denotes our $\lam\inv \hat J_\lam$ by $\hat a$),
a limiting argument is used to establish $\hat J_\lam(0) \le 1$, $\hat J\crit(0) = 1$, and
\begin{equation}
\abs{ \hat \Pi_\lam(0) - \hat \Pi_\lam(k) }
\le O(\beta) [ \hat \adj(0) - \hat \adj(k)] 
\end{equation}
uniformly in $\lam \le \lam_c$, 
where $\beta$ is the small parameter of the lace expansion.
By restricting to $\lam \in [\half \lam_c, \lam_c]$ and by using the small $\beta$, we have
\begin{equation}
\hat J_\lambda(0) - \hat J_\lambda(k)
= \lambda [ \hat \adj(0) - \hat \adj(k) + \hat \Pi_\lambda(0) - \hat \Pi_\lambda(k) ]
\ge \frac 1 4 \lambda_c [ \hat \adj(0) - \hat \adj(k)] ,
\end{equation}
and we get the desired infrared bound from that of $\hat \adj$ in Assumption~\ref{ass:adj}(ii).

For the estimates on $\Pi_\lam$,
we recall from \cite[Corollary~5.3]{HHLM26} that
\begin{equation}
\norm{ \Pi_\lambda }_{L^1  \cap L^\infty} \lesssim 1,
\qquad
\norm{ ( 1 - \cos(k\cdot x) ) \Pi_\lambda(x) }_{1}  
	\lesssim \beta [ \hat \adj(0) - \hat \adj(k) ]
	\lesssim \beta \abs k^2
\end{equation}
uniformly in $k\in \Rd$ and in $\lambda \in [0, \lambda_c )$.
Since $\Pi_\lambda \to \Pi\crit$ pointwise as $\lambda \to \lambda_c^-$ by \cite[Corollary~6.1]{HHLM26}, these uniform estimates extend to $\lambda = \lambda_c$ by Fatou's lemma,
so we get the $\norm{ \Pi_\lambda }_{L^1  \cap L^\infty}$ part of \eqref{eq:Pi_base}.
Also, the $\abs k\to 0$ limiting argument in \eqref{eq:k_limit} applies to give $\norm{ \abs x^2 \Pi_\lam(x) }_1 \lesssim \beta$, so we only need to estimate $\norm{ \abs x^2 \Pi_\lam(x) }_\infty$.

For this, we use Proposition~\ref{prop:diagram} with $\phi = 2$, $\theta=0$, and $p=\infty$. The proposition gives a uniform estimate in $\lambda \in [\half \lambda_c, \lambda_c)$, which then extends to $\lambda_c$ by Fatou's lemma.
To verify its hypothesis, 
since $\theta=0$ and $\norm \adj_1 = 1$,
we only need to show that
$\bar E \supk 2, \bar E \supk 0, 
\bar W_\infty \supk{2,0}, 
\bar T \supk 0$, 
and $\bar H_\infty \supk{2,0}$ are finite.
Since $\bar W_\infty \supk{2,0} 
	\le \bar E \supk 2 \bar E \supk 0$ and
\begin{equation}
\bar H_\infty \supk{2,0} 
	\le  ( \lambda_c \norm{\tau\crit}_2^2 )^2 \bar E\supk 2 \bar T \supk 0
\end{equation}
by Lemma~\ref{lem:H_infty},
we can conclude using Lemma~\ref{lem:base}.
\end{proof}

\section{Proof of Proposition~\ref{prop:climb}}
\label{sec:simple}

In this section, 
we use Proposition~\ref{prop:diagram} to prove Proposition~\ref{prop:climb}.
The main work is to use the good $\phi$-th moment of $\Pi_\lam$ to estimate the diagrams in the hypothesis of Proposition~\ref{prop:diagram}.
This is stated as Lemma~\ref{lem:simple}, whose proof uses basic Fourier analysis and constitutes most of the section.

\subsection{Bound on diagrams}
\begin{lemma} \label{lem:simple}
Let $d > d_0 \vee 8$ and $\phi \in [0, d-2]$.
Suppose the $\phi$-th moment of $\Pi_\lambda$ is good.
Then, for any integers $a,b \in [0, \phi \vee 2]$:

\begin{enumerate}[label=(\roman*)]
\item
\emph{(Edge)}
If $a < d-2$ and $s \in [2,\infty]$ are such that $s\inv < (d-2-a)/d$, 
then
\begin{equation}
\bar E \supa_s:=  \sup_{\lambda \in [\half \lambda_c, \lambda_c ) }
	\bignorm{ \tilde\tau_\lambda\supa }_{L^s \cap L^\infty} 
	< \infty .
\end{equation}
In particular, $\bar E \supa = \bar E_\infty \supa < \infty$.

\item 
\emph{(Bubble)}
Let  $a, b < d-2$.
If both $a,b < \half d - 2$, then
\begin{equation} \label{eq:bubble_small}
\sup_{ p \in [1, \infty] }
\bar W_p \supab < \infty .
\end{equation}
If $2 < a + b < \frac 3 2 d -4$,
then there exists $ p_\ab \in [1, p^*_\ab) $ for which
\begin{equation} \label{eq:bubble_large}
\sup_{ p \in [p_\ab \wedge 2, \infty] }
\bar W_p \supab < \infty .
\end{equation}

\item 
\emph{(Triangle)}
For $b=2$, $\bar T \supb < \infty$.

\item 
\emph{(Martini)}
If $a < d-2$ and $b = 0$ or $2$, then
\begin{equation}
\sup_{ p \in [1, \infty] }
\bar H_p \supab < \infty .
\end{equation}
\end{enumerate}
\end{lemma}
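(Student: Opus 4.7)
The plan is to translate each diagram into Fourier space and exploit the OZ identity $\hat{\tau}_\lambda = \hat{J}_\lambda/[\lambda(1-\hat{J}_\lambda)]$ together with the infrared bound \eqref{eq:J_infrared}. All estimates will be uniform in $\lambda \in [\tfrac12\lambda_c, \lambda_c)$, extending to $\lambda_c$ by Fatou. The core of the argument is part (i); parts (ii)--(iv) then follow by standard diagrammatic estimates that combine the $L^s$ bounds from (i) with H\"older and Young's inequalities.

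For part (i), the good $\phi$-th moment of $\Pi_\lambda$ together with Assumption~\ref{ass:adj} gives uniform control of $\||x|^a J_\lambda\|_{p_a}$ for each integer $a \le \phi$. Via Fourier differentiation this controls $D_k^\alpha \hat{J}_\lambda$ up to order $a$. Differentiating the OZ identity and invoking the infrared bound yields $|D_k^\alpha \hat{\tau}_\lambda(k)| \lesssim |k|^{-(2+|\alpha|)}$ at low $k$ for $|\alpha| \le a$. Hausdorff--Young applied to the Fourier pair $|x|^a \tau_\lambda \leftrightarrow D^\alpha \hat{\tau}_\lambda$ then gives $\|\tau_\lambda^{(a)}\|_s < \infty$ precisely when $s \in [2, \infty]$ satisfies $s^{-1} < (d-2-a)/d$, so that $|k|^{-(2+a)}$ lies in $L^{s'}$ near the origin. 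The bound transfers from $\tau_\lambda^{(a)}$ to $\tilde{\tau}_\lambda^{(a)}$ via $\tilde{\tau}_\lambda \le \adj + \lambda\adj * \tau_\lambda$ and the hypotheses on $\adj$ in Assumption~\ref{ass:adj}(i).

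Part (ii) follows from H\"older's inequality in $x$: $W_p^{(a,b)}(u) \le \|\tilde{\tau}_\lambda^{(a)}\|_{p_1} \|\tilde{\tau}_\lambda^{(b)}\|_{p_2}$ for $p^{-1} = p_1^{-1} + p_2^{-1}$, with $p_1, p_2$ chosen according to the range allowed by (i). In the small-moment regime $a,b < \tfrac{d}{2}-2$ both factors lie in every $L^q$ with $q \in [2,\infty]$, covering all $p \in [1,\infty]$ and giving \eqref{eq:bubble_small}; in the large-moment regime $2 < a+b < \tfrac{3d}{2}-4$, the weight-dependent constraint of (i) forces $p \ge p_{a+b} \wedge 2$ with some $p_{a+b} < p^*_{a+b}$, yielding \eqref{eq:bubble_large}. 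Part (iii) is immediate: by the reflection symmetry of $\tau_\lambda$ and Young's inequality, $\|T^{(2)}\|_\infty \le \lambda^2 \bar{E}^{(2)} \|\tau_\lambda\|_2^2$, which is finite by Lemma~\ref{lem:base} and (i) (using $d > 8$). For part (iv), integrating out the internal variables $z_1, z_2, z_3$ of \eqref{eq:def_Y} after bounding $\tau_\lambda^{(a)}(z_2-z_1) \le \bar{E}^{(a)}$ yields a pointwise estimate of the form $\sup_y Y^{(a)}(x,y) \lesssim \bar{E}^{(a)} \lambda^3 \tau_\lambda^{*4}(x)$; substituting into \eqref{eq:def_H} and applying H\"older in $x$ reduces the estimate to $L^{q_1}$ bounds on $\tau_\lambda^{*4}$ (obtained via Plancherel and the IR bound, requiring $d > 8$) combined with $L^{q_2}$ bounds on $\tau_\lambda^{(b)} * \tau_\lambda$ (obtained via Young and part (i)), with $(q_1, q_2)$ chosen to cover every $p \in [1,\infty]$ when $b \in \{0,2\}$.

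The principal obstacle is the large-moment regime of (ii): since increasing the weight $a$ simultaneously tightens the admissible $L^s$ range in (i), balancing the H\"older exponents to cover $[p_{a+b} \wedge 2, \infty]$ while retaining $p_{a+b} < p^*_{a+b}$ requires careful bookkeeping, and a naive choice would only yield $p_{a+b} = p^*_{a+b}$, insufficient to feed back into the deconvolution theorem. It is precisely this $L^p$-refinement over Hara's $L^1$-based induction that underlies the Liu--Slade deconvolution strategy and allows us to bypass the second diagrammatic estimate of \cite[Lemma~1.7]{Hara08}.
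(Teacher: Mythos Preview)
Your approach to parts (i) and (ii) is essentially the paper's: the paper also represents $\tilde\tau_\lambda=\adj+\tau_{\lambda,+}$ with $\hat\tau_{\lambda,+}=\hat\adj\hat J_\lambda/(1-\hat J_\lambda)$, differentiates via the product/quotient rule, bounds each piece using H\"older together with the infrared bound and moment estimates on $J_\lambda$, and applies Hausdorff--Young; for (ii) it likewise uses H\"older $W_p^{(a,b)}(u)\le\|\tilde\tau_\lambda^{(a)}\|_s\|\tilde\tau_\lambda^{(b)}\|_r$ and feeds in (i).

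Parts (iii) and (iv), however, contain genuine gaps. For (iii), the inequality $\|T^{(2)}\|_\infty\le\lambda^2\bar E^{(2)}\|\tau_\lambda\|_2^2$ is simply false: there is no Young-type bound $\|f*g*h\|_\infty\le\|f\|_\infty\|g\|_2\|h\|_2$ (take $f$ close to a nonzero constant). The paper instead passes to Fourier space and estimates $\bigl\|(\nabla^\alpha\hat\tau_{\lambda,+})\bigl(\hat J_\lambda/(1-\hat J_\lambda)\bigr)^2\bigr\|_1$ directly, using the infrared bound to control the extra $|k|^{-4}$ factor; this is where $d>8$ enters.

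For (iv), your decomposition fails at $p=1$ for $d$ near the threshold. Taking $\sup_y Y^{(a)}(x,y)\lesssim\bar E^{(a)}\lambda^3\tau_\lambda^{*4}(x)$ (which requires discarding the factor $\tau_\lambda(y-z_3)\le1$) leaves you with $H_1^{(a,b)}(u,v)\lesssim\bar E^{(a)}\lambda^4(\tau_\lambda^{*4}*\tau_\lambda^{(b)}*\tau_\lambda)(v-u)$, a six-fold (weighted) convolution whose sup norm needs $d>12$ for $b=0$ and $d>14$ for $b=2$. Equivalently, your H\"older split $\|\tau_\lambda^{*4}\|_{q_1}\|\tau_\lambda^{(b)}*\tau_\lambda\|_{q_2}$ with $q_1^{-1}+q_2^{-1}=1$ forces $q_1=q_2=2$ (Hausdorff--Young gives nothing below $L^2$), and $\tau_\lambda^{*4}\in L^2$ requires $d>16$. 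The paper avoids this by \emph{not} taking the supremum over $y$: it keeps the $y$-integral, writes the $x$ and $z_2$ integrals as convolutions, extracts $\|\tau_\lambda^{(b)}*\tau_\lambda*\tau_\lambda\|_\infty=\lambda^{-2}\bar T^{(b)}$, and is left with exactly the square $\bar S=\sup_\lambda\lambda^3\|\tau_\lambda^{*4}\|_\infty$, finite for all $d>8$. The factor $\tau_\lambda(y-z_3)$ you threw away is precisely what allows the triangle and the square to separate.
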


\begin{remark}
The restriction on integers $a,b$ is due to an integer number of derivatives used in the proof. 
It might be possible to use fractional derivatives (like in \cite[Section~2.3]{LS24a} or \cite[Section~4.3]{Hara08}) to extend to non-integer $a,b$, but the current lemma is sufficient for our purposes.
Also, the restriction on $d>8$ is mainly due to a use of the square diagram in our bound on $\bar H_1 \supab$. (One can use $\bar T \supb$ with $b < 1$ instead of $\bar T \supk 2$ in dimension $d = 7\text{ or }8$.)
The proof of Proposition~\ref{prop:climb} uses only $\bar H_{p_\ab}\supab$ (like $\bar W_{p_\ab}\supab$ in \eqref{eq:bubble_large}), which should be finite in dimension $7$ or $8$. 
\end{remark}

\begin{proof}[Proof of Proposition~\ref{prop:climb}]
Let $d > d_0 \vee 8$, $\theta = 2$, and $\phi \in (0,d-2)$ be an integer. 
We assume the $\phi$-th moment of $\Pi_\lam$ is good, which allows us to use Lemma~\ref{lem:simple}.
To prove that the $(\phi+\theta)$-th moment of $\Pi_\lam$ is good, we use Proposition~\ref{prop:diagram} with $p = p_\phitheta$, $2$, and $\infty$, where $p_\phitheta \in [1, p_\phitheta^*)$.
For the hypothesis, we need to verify that $\norm{ \abs x^\theta \adj(x) }_1,
\bar E \supphi, \bar E \suptheta, \bar W_p \supphitheta, \bar T \suptheta, \bar H_p \supphitheta$, and $\bar H_p \supk{\phi,0}$ are all finite with those values of $p$.

Since $\theta = 2$, we have 
$\abs x^\theta \adj(x) \in L^1$ by \eqref{eq:adj_hyp} in Assumption~\ref{ass:adj}, 
$\bar E \suptheta < \infty$ by Lemma~\ref{lem:base}, and
$\bar T \suptheta < \infty$ by Lemma~\ref{lem:simple}(iii). 
Also, 
since $\phi < d-2$ is an integer, Lemma~\ref{lem:simple}(i) with $a = \phi$ and $s = \infty$ gives $\bar E \supphi < \infty$,
and Lemma~\ref{lem:simple}(iv) gives both $\bar H_p \supphitheta, \bar H_p \supk{\phi,0} < \infty$.
Finally, for $\bar W_p \supphitheta$, 
we use Lemma~\ref{lem:simple}(ii) with $a = \phi$ and $b = \theta = 2$. 
When $\phi < \half d - 2$ we have \eqref{eq:bubble_small}, so we can take $p_\phitheta = 1$.
When $\phi + \theta < \frac 3 2 d - 4$ only, the lemma gives \eqref{eq:bubble_large} with some unspecified $p_\phitheta \in [1, p_\phitheta^*)$ that is still sufficient for the proof.
\end{proof}

In the remainder of Section~\ref{sec:simple}, we prove Lemma~\ref{lem:simple}.
We will use the inverse Fourier transform to estimate the diagrams in Lemma~\ref{lem:simple}(i) and~(iii).
The diagrams in Lemma~\ref{lem:simple}(ii) and~(iv) then follow from classical inequalities in the physical space.

\subsection{Fourier transform estimates}

In this subsection, we derive Fourier integral representations that will be useful to bound $\bar E_s \supa$ and $\bar T\supb$. 
We start from $\tilde \tau_\lam$.
Recall the notation $J_\lambda = \lambda (\adj + \Pi_\lambda)$
and the lace expansion equation \eqref{eq:OZ-intro}
\begin{equation}
\lambda \tau_\lambda = J_\lam + J_\lam * \lambda \taulam.
\end{equation}
Since $\tau_\plus = \lambda \adj * \taulam$, it then satisfies
\begin{equation}
\tau_\plus = \adj * (J_\lam + J_\lam * \lambda \taulam)
= \adj * J_\lam + J_\lam * \tau_\plus ,
\end{equation}
which can be rearranged as 
\begin{equation}
(\delta - J_\lambda) *  \tau_\plus = \adj * J_\lam.
\end{equation}
By taking the Fourier and inverse Fourier transforms, we obtain
\begin{equation} \label{eq:tau_int}
\tilde \tau_\lambda(x) = \adj(x) +  \int_{\R^d} \hat \tau_\plus(k)  e^{-ik\cdot x} \frac{ \D k }{ (2\pi)^d }, 
\qquad
\hat \tau_\plus(k) = \frac{ \hat \adj \hat J_\lambda }{ 1 - \hat J_\lambda }(k) ,
\end{equation}
which allows us to estimate $\tilde \tau_\lam$ by estimating $\adj$ and $\hat \tau_\plus$.

For weighted versions of $\tilde \tau_\lam$, we first use the elementary inequality
\begin{equation}
\abs x^a
\le ( \sqrt d \norm x_\infty)^a
\le d^{a/2} \sum_{j=1}^d \abs{x_j}^a
	\qquad (a\ge 0)
\end{equation}
to distribute $\abs x^a$ into each coordinates.
Then, since the Fourier transform of $x_j^a \tau_\plus(x)$ is $(-i \grad_j)^a \hat \tau_\plus$, we can bound
\begin{align} \label{eq:tau_fourier}
\abs x^a \tilde \tau_\lam(x)
&\le  \abs x^a \adj(x) +
	d^{a/2} \sum_{j=1}^d \abs{ x_j^a \tau_\plus(x) }  		\nl
&= \abs x^a \adj(x) +  d^{a/2} \sum_{j=1}^d    \biggabs{
	\int_\Rd (-i \grad_j)^a \hat \tau_\plus(k)  e^{-ik\cdot x} \frac{ \D k }{ (2\pi)^d }     	} .
\end{align}
Similarly, when $b \ge 0$ is an even integer, using $\abs {y_j}^b = y_j^b$, a similar Fourier integral upper bound can be obtained via
\begin{align} \label{eq:triangle_fourier}
\lambda^2 ( \tau_\plus \supb * \taul * \taul )(x)
&\le d^{b/2} \sum_{j=1}^d  \Bigl( [ y_j^b \tau_\plus(y) ] * \lambda\taul * \lambda\taul \Big)(x)  	\nl
&= d^{b/2} \sum_{j=1}^d  \int_\Rd 
	\Big[ (-i \grad_j)^b \hat \tau_\plus(k)  \Big]
	\Big[ \frac {\hat J_\lambda}{ 1 - \hat J_\lambda}(k) \Big]^2
	e^{-ik\cdot x} \frac{ \D k }{ (2\pi)^d } .
\end{align}

We next estimate derivatives of $\hat \adj$ and $\hat J_\lambda$;
these can be used to get derivatives of $\hat \tau_\plus$ via the product and quotient rules of derivatives.
Since $J_\lam = \lambda( \adj + \Pi_\lam)$, using Lemma~\ref{lem:Pi_base} and \eqref{eq:adj_hyp} in Assumption~\ref{ass:adj}, the $L^1$ Fourier transform immediately gives that $\hat J_\lam$ is twice (classically) differentiable, with 
\begin{equation} \label{eq:Jhat_small}
\sup_{\lambda \in [\half \lambda_c, \lambda_c ) }
	\bignorm{ \grad^\gamma \hat J_\lam }_{L^2 \cap L^\infty} 
	< \infty 
\qquad (\abs \gamma \le 2).
\end{equation}
The following lemma extends the above estimate to higher-order \emph{weak} derivatives. 
The weak derivative is defined for locally integrable functions via the usual integration by parts formula;
we refer to \cite[Chapter~5]{Evan10} for a detailed introduction.
For us, the important point is that the weak derivative allows us to make sense of the right-hand sides of \eqref{eq:tau_fourier} and \eqref{eq:triangle_fourier}, under a weak \emph{a priori} estimate on $\Pi_\lam(x)$.
The estimates on $\grad^\gamma \hat \adj$ and $\grad^\gamma \hat J_\lam$ in the following lemma are not optimal but are sufficient for our purpose; stronger estimates can be found in \cite[Section~2]{Liu25Rd}.

\begin{lemma} \label{lem:fourier}
Let $d > d_0$ and $\phi \in [0, d-2]$.
Suppose the $\phi$-th moment of $\Pi_\lambda$ is good.
Then $\hat J_\lambda$ is $\floor \phi$ times weakly differentiable on $\Rd$, and:

\begin{enumerate}[label=(\roman*)]
\item
Let $\gamma$ be a multi-index with $\abs \gamma \le \phi \vee 2$.
On the ball $B = \{ \abs k \le 1 \}$,
if $q\in [1,\infty)$ is chosen so that $q \inv > \abs \gamma / d$,
then
\begin{equation}
\sup_{\lambda \in [\half \lambda_c, \lambda_c ) }
\max\biggl\{
\bignorm{ \grad^\gamma \hat \varphi }_{L^q(B)} 
,\,
\bignorm{ \grad^\gamma \hat J_\lam }_{L^q(B)} 
,\,
\1_{\gamma\ne0}
\biggnorm{ \frac{  \grad^{\gamma} \hat J_\lam  }{ 1 - \hat J_\lam } }_{L^q(B)}
\bigg\} < \infty .
\end{equation}

\item
Let $n \ge 2$
and $\gamma_i$ be multi-indices such that $\sum_{i=1}^n \abs{ \gamma_i } \le \phi \vee 2$. 
If $r\in [1,\infty]$ is chosen so that 
$r\inv \ge (  -2 + \sum_{i=1}^n \abs{ \gamma_i }  ) / d  $, then
\begin{equation}
\sup_{\lambda \in [\half \lambda_c, \lambda_c ) }
\biggnorm{
\grad^{\gamma_1} \hat \adj
\prod_{i=2}^n \grad^{\gamma_i} \hat J_\lam 
}_{L^r(\R^d) } < \infty .
\end{equation}
\end{enumerate}
\end{lemma}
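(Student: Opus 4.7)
The overall strategy is to exploit the Fourier duality $\grad^\gamma \hat g(k) = \mathcal F[ (i)^{|\gamma|} x^\gamma g(x) ](k)$, so that moment bounds on $J_\lambda = \lambda(\adj + \Pi_\lambda)$ in physical space translate directly into $L^r$ bounds on derivatives of $\hat J_\lambda$ in Fourier space. Combining Lemma~\ref{lem:Pi_base} (which handles $|\gamma|\le 2$) with the hypothesised good $\phi$-th moment of $\Pi_\lambda$, and using the pointwise split $|x|^{|\gamma|} \le |x|^2 \1_{|x|\le 1} + |x|^\phi \1_{|x|> 1}$ for intermediate orders, one sees that for every multi-index $|\gamma| \le \phi \vee 2$ the function $x^\gamma J_\lambda(x)$ belongs to $L^{p_\gamma} \cap L^2 \cap L^\infty$ uniformly in $\lambda$, where $p_\gamma \in [1, p_\gamma^*)$ when $|\gamma|>2$ and $p_\gamma = 1$ when $|\gamma|\le 2$. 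Similar bounds hold for $x^\gamma \adj(x)$ by Assumption~\ref{ass:adj}. Hausdorff--Young (applied to the $L^{p_\gamma}$ part, using $p_\gamma \le 2$ which will be verified case by case) and Plancherel together deliver $\grad^\gamma \hat J_\lambda,\, \grad^\gamma \hat \adj \in L^{p_\gamma'} \cap L^2(\R^d)$ uniformly, which also establishes the weak differentiability claim.

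For part (i) I work on the bounded ball $B$, where $L^s$ inclusions are monotone. If $q \le 2$, the claim reduces immediately to Plancherel. If $q > 2$ then necessarily $|\gamma| \le d/2$, which forces $p_\gamma^* \le 2$; Hausdorff--Young applies directly and the embedding $L^{p_\gamma'}(B) \hookrightarrow L^q(B)$ holds because $p_\gamma' > (p_\gamma^*)' = d/(|\gamma|-2) > d/|\gamma| > q$. For the ratio, the infrared bound of Lemma~\ref{lem:Pi_base} gives $|(1-\hat J_\lambda(k))^{-1}| \lesssim |k|^{-2}$ on $B$, hence $(1-\hat J_\lambda)^{-1} \in L^{q_2}(B)$ for every $q_2 < d/2$. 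H\"older with $q^{-1} = q_1^{-1} + q_2^{-1}$, choosing $q_1 \le p_\gamma'$ and $q_2 < d/2$, then produces the stated $L^q(B)$ bound: the strict inequality $1/p_\gamma > 1/p_\gamma^* = (d-|\gamma|+2)/d$ is exactly the slack that allows $q_1^{-1} + q_2^{-1}$ to exceed the threshold $|\gamma|/d$ while still satisfying the individual constraints.

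For part (ii) the product is on all of $\R^d$, so the bounded-domain trick is unavailable and I apply H\"older directly with exponents satisfying $r^{-1} = \sum_i s_i^{-1}$, controlling each factor in $L^{s_i}$ via interpolation between $L^2$ and $L^{p_{\gamma_i}'}$ (or $L^\infty$ when $|\gamma_i|\le 2$). The admissible set for $\sum_i s_i^{-1}$ is the interval $\bigl[\sum_i 1/p_{\gamma_i}',\, n/2\bigr]$; the target $(\sum_i |\gamma_i| - 2)/d$ lies inside it because $n \ge 2$ and $\sum_i |\gamma_i| \le \phi \vee 2 \le d-2$ place it below $n/2$, while the elementary inequality $\sum_i (|\gamma_i|-2) \le \sum_i |\gamma_i| - 2$ (valid for $n \ge 2$) combined with the strict $1/p_{\gamma_i}' < (|\gamma_i|-2)/d$ places it strictly above the lower endpoint. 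Choosing $s_i$ to hit the target exponent concludes the proof.

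The main obstacle throughout is the careful bookkeeping of strict inequalities: the strictness $p_a < p_a^*$ built into Definition~\ref{def:good} is precisely the margin that permits the final bounds at the boundary exponent, and this strictness must be propagated unbroken through each H\"older application. A secondary technicality is that the hypothesis supplies the good moment only at order $\phi$, so the intermediate orders $|\gamma|\in(2,\phi)$ require the splitting argument to inherit the $L^{p_\phi}\cap L^2\cap L^\infty$ integrability from Lemma~\ref{lem:Pi_base} and the endpoint moment, and this must be done with a $p_\gamma$ that is still strictly below $p_\gamma^*$ whenever Hausdorff--Young is to be invoked.
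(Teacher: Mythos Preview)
Your overall strategy is correct and matches the paper's, but two steps do not work as written.

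\textbf{Gap 1: the intermediate-order interpolation.} The pointwise split $|x|^{|\gamma|}\le |x|^2\1_{|x|\le1}+|x|^\phi\1_{|x|>1}$ only shows $x^\gamma \Pi_\lambda\in L^{p_\phi}\cap L^2\cap L^\infty$. It does \emph{not} give $p_\gamma<p_\gamma^*$: if $|\gamma|<\phi$ then $p_\gamma^*<p_\phi^*$, and $p_\phi$ may well exceed $p_\gamma^*$ (e.g.\ $d=20$, $\phi=10$, $|\gamma|=5$, $p_\phi=1.5$: then $p_\gamma^*=20/17<1.5$). Your part~(i) argument for $q>2$ explicitly uses $p_\gamma<p_\gamma^*\le2$ both to invoke Hausdorff--Young and to get the embedding $L^{p_\gamma'}(B)\hookrightarrow L^q(B)$, so this is a real obstruction. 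The paper uses a multiplicative interpolation instead: writing $b=|\gamma|$, $a=\phi$,
\[
|x|^b|\Pi_\lambda|=\bigl(|x|^2|\Pi_\lambda|\bigr)^{\frac{a-b}{a-2}}\bigl(|x|^a|\Pi_\lambda|\bigr)^{\frac{b-2}{a-2}},
\]
and H\"older with $1/p_b=\tfrac{a-b}{a-2}+\tfrac{b-2}{(a-2)p_\phi}$ yields $1/p_b'<(b-2)/d$ directly from $p_\phi<p_\phi^*$.

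\textbf{Gap 2: the ratio for $|\gamma|=1$.} Here $p_\gamma=1$, so $\nabla^\gamma\hat J_\lambda\in L^\infty$ only, and H\"older with $(1-\hat J_\lambda)^{-1}\in L^{q_2}(B)$, $q_2^{-1}>2/d$, gives the ratio in $L^q(B)$ only for $q^{-1}>2/d$. The statement requires all $q^{-1}>1/d$, so the range $q\in[d/2,d)$ is missed. The paper closes this by Taylor-expanding: since $J_\lambda$ is even with finite second moment, $|\nabla\hat J_\lambda(k)|\lesssim|k|$, whence the ratio is $\lesssim|k|^{-1}\in L^q(B)$ for all $q^{-1}>1/d$.
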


\begin{proof}
Using Definition~\ref{def:good} and Lemma~\ref{lem:Pi_base}, the $\phi$-th moment of $\Pi_\lambda$ being good implies that $(1+\abs x^{\floor \phi}) \Pi_\lambda(x)$ belongs to $L^2(\Rd)$, with the norm uniform in $\lambda$.
Taking the $L^2$ Fourier transform, we immediately get that $\hat \Pi_\lam$ is $\floor \phi$ times weakly differentiable on $\Rd$, by Theorem~8 in \cite[Section~5.8.5]{Evan10}.
Since $\phi \le d-2$, we have $(1+\abs x^{\floor \phi}) \adj(x) \in L^2(\Rd)$ too by Assumption~\ref{ass:adj}(i). It follows that $\hat \adj$ and $\hat J_\lam = \lambda(\hat \adj + \hat \Pi_\lam)$ are $\floor \phi$ times weakly differentiable.
The rest of the claims are consequences of the moment estimates and the boundedness of the $L^p$ Fourier transform (the Hausdorff--Young inequality), by the following argument from \cite[Section~2]{Liu25Rd}.

\smallskip\noindent
(i) We first estimate $\grad^\gamma \hat \Pi_\lam$.
If $\abs \gamma \le 2$, we use \eqref{eq:Pi_base} and the $L^1$ Fourier transform to get $\grad^\gamma \hat \Pi_\lam \in L^\infty(\Rd) \subset L^q(B)$ for any $q\in [1, \infty]$, with the norm uniform in $\lambda$.
If $ \abs \gamma > 2$, 
we write $a = \phi$, $b = \abs \gamma$, and decompose
\begin{equation}
\abs x^b \Pi_\lam(x) =  \big( \abs x^{ 2 } \Pi_\lam(x) \big)^{ \frac{a-b}{a-2} } 
	\big( \abs x^{ a } \Pi_\lam(x) \big)^{ \frac{b-2}{a-2} } .
\end{equation}
Using H\"older's inequality, \eqref{eq:Pi_base}, and \eqref{eq:Pi_good_moment}, we get that
\begin{equation}
\sup_{\lambda \in [\half \lambda_c, \lambda_c ) }
\bignorm{ \abs x^b \Pi_\lam(x) }_{L^2 \cap L^{p_b}}
< \infty
\quad \text{with} \quad
\frac 1 {p_b} = \frac{ a-b }{a-2} + \frac { b-2 }{ (a-2) p_\phi } ,
\end{equation}
where $p_\phi \in [1, p_\phi^*)$.
Taking the Fourier transform and using the definition of $p^*_\phi$, we obtain
\begin{equation} \label{eq:h_gamma_proof}
\sup_{\lambda \in [\half \lambda_c, \lambda_c ) }
\bignorm{
\grad^\gamma \hat \Pi_\lam }_{L^2 \cap L^{q_b \vee 2}} 
< \infty ,
\quad \text{where} \quad
\frac 1 {q_b} = 1 - \frac 1 {p_b}
	= \frac { b-2 }{ a-2 } \Big( 1 - \frac 1 {p_\phi} \Bigr)
	< \frac{ b-2}d. 
\end{equation}
By restricting to the compact subset $B$, the $L^q(B)$ spaces are nested, and we get 
$\grad^\gamma \hat \Pi_\lam \in L^{d/(\abs \gamma - 2)}(B) \subset L^q(B)$ for any $q\inv \ge \abs \gamma / d$, with the norm uniform in $\lambda$.

Exactly the same argument works for $\grad^\gamma \adj$, with the only changes being to use \eqref{eq:adj_hyp} and \eqref{eq:adj_d-2} instead of \eqref{eq:Pi_base} and \eqref{eq:Pi_good_moment}, and to use $a = d-2$ and $p < d / 4 = p^*_{d-2}$ from \eqref{eq:adj_d-2} instead of $a = \phi$ and $p_\phi$.
The desired estimate on derivatives of $\hat J_\lam = \lambda (\hat \adj + \hat \Pi_\lambda)$ follows.

To estimate $\grad^\gamma \hat J_\lam / (1 - \hat J_\lam)$, 
we use the infrared bound \eqref{eq:J_infrared} to bound
\begin{equation} \label{eq:h_loc_pf}
\biggabs{ \frac{ \grad^\gamma \hat J_\lam }{ 1 - \hat J_\lam } (k) }
\lesssim  \frac{ \abs{ \grad^\gamma \hat J_\lam (k)} }{ \abs k^2 } \1_B(k) 
	+ \abs{ \grad^\gamma \hat J_\lam (k)} .
\end{equation}
We have already estimated the second term.
For the first term,
if $\abs \gamma = 1$, 
we use Taylor's theorem, evenness of $J_\lam(x)$, and finiteness of its second moment to bound 
$\abs{ \grad^\gamma \hat J_\lam(k) } \lesssim \abs k$ uniformly in $\lambda$. When divided by $\abs k^2$, the ratio is thus bounded by a multiple of $\abs k \inv \1_B$, which is in $L^q(B)$ for all $q\inv > 1/d$, as desired.
If $\abs \gamma \in [2, \phi]$, 
we apply H\"older's inequality using the previous estimate $\grad^\gamma \hat J_\lam \in L^{d/ (\abs \gamma - 2)}(B)$, in conjunction with that $\abs k^{-2} \in L^p(B)$ for $p\inv > 2/d$.
Their product is thus is in $L^q(B)$ with $q\inv > (\abs \gamma - 2 + 2)/d = \abs \gamma / d$, with the norm uniform in $\lambda$, as desired.

\smallskip\noindent
(ii) This is proved in \cite[Lemma~2.3]{Liu25Rd}, by carefully choosing $L^q$ spaces for $\grad^\gamma \hat \adj$ and $\grad^\gamma \hat J_\lam$ in \eqref{eq:h_gamma_proof}.
\end{proof}

\subsection{Proof of Lemma~\ref{lem:simple}} 

We can now prove Lemma~\ref{lem:simple}. 
We prove in the order (i), (iii), (ii), and then (iv).

\subsubsection{Edge}

\begin{proof}[Proof of Lemma~\ref{lem:simple}(i)]
Let $a$ be an integer in $[0, \phi \vee 2]$.
We want to prove that, if $a < d-2$ and $s \in [2,\infty]$ is such that $s\inv < (d-2-a)/d$, then
\begin{equation}
\bar E \supa_s=  \sup_{\lambda \in [\half \lambda_c, \lambda_c ) }
	\bignorm{ \tilde\tau_\lambda\supa }_{L^s \cap L^\infty} 
	< \infty .
\end{equation}
We use \eqref{eq:tau_fourier};
since $\abs x^a \adj(x) \in L^2 \cap L^\infty \subset L^s \cap L^\infty$ by Assumption~\ref{ass:adj}(i), we only need to estimate the norm of $x_j^a \tau_\plus(x)$ for all $j$.
By the boundedness of the inverse Fourier transform (the Hausdorff--Young inequality), we have
\begin{equation} \label{eq:edge_pf0}
\bignorm{ x_j^a \tau_\plus(x) }_{s \vee 2 }
\lesssim \bignorm{  \grad_j^a \hat \tau_\plus }_{r\wedge 2} ,
\end{equation}
where $s, r \in [1,\infty]$ are related by $s\inv + r\inv = 1$.
This reduces our task to showing that $\hat \tau_\plus$ is $a$ times weakly differentiable on $\Rd$, and to getting a uniform estimate for the right-hand side of \eqref{eq:edge_pf0} when $r\inv > (2 + a) /d$. 
We will estimate all derivatives of $\hat \tau_\plus$ together, using the product and quotient rules of weak derivatives.\footnote{
See \cite[Appendix~A]{LS24a} for the case of $\T^d = (\R / 2\pi \Z)^d$. The proofs for $\Rd$ are the same, by replacing all $L^p(\T^d)$ spaces by local $L^p(\Rd)$ spaces.}

Let $\alpha$ be a multi-index with $\abs \alpha \le a$.
For the differentiability of $\hat \tau_\plus$, we need to show that all terms produced by the usual product and quotient rules are locally integrable. That is, we need to show that all terms of the form
\begin{equation} \label{eq:tau_alpha_decomp}
\frac{ (\grad^{\alpha_1} \hat \adj) (\grad^{\alpha_2} \hat J_\lam) }{ 1- \hat J_\lam}
\prod_{i=1}^n \frac{  - \grad^{\gamma_ i} \hat J_\lam  }{ 1 - \hat J_\lam } ,
\end{equation}
where $\alpha = \alpha_1 + \alpha_2 + \alpha_3$, 
$0 \le n \le \abs {\alpha_3 }$, $\abs {\gamma_i} \ge 1$, 
and $\sum_{i=1}^n \gamma_i = \alpha_3$, 
are locally integrable.
Then the weak derivative $\grad^\alpha \hat \tau_\plus$ would be given by a linear combination of terms like \eqref{eq:tau_alpha_decomp}, via the usual calculus rules.

Indeed, on the ball $B = \{ \abs k \le 1 \}$, H\"older's inequality and the infrared bound $ [1 - \hat J_\lam(k)]\inv \lesssim \abs k^{-2} \vee 1 = \abs k^{-2}$ imply that the $L^r(B)$ norm of \eqref{eq:tau_alpha_decomp} is bounded by a constant multiple of
\begin{equation} \label{eq:tau_alpha_decomp_norm}
\biggnorm{ \frac 1 {\abs k^2 } }_{L^q(B)}
\bignorm{ \grad^{\alpha_1} \hat \adj }_{L^{r_1}(B)}
\bignorm{ \grad^{\alpha_2} \hat J_\lam }_{L^{r_2}(B)}
\prod_{i=1}^n \biggnorm{ \frac{  - \grad^{\gamma_ i} \hat J_\lam  }{ 1 - \hat J_\lam } }_{L^{q_i}(B)}  ,
\end{equation}
where $r$ satisfies $r\inv = q\inv + r_1\inv + r_2 \inv  + \sum_{i=1}^n q_i\inv$.
The norm of $\abs k^{-2} $ is finite when $q\inv > 2/d$.
Also, by Lemma~\ref{lem:fourier}(i), the remaining norms in \eqref{eq:tau_alpha_decomp_norm} are finite and uniform in $\lambda$ when $r_i\inv > \abs {\alpha_i} /d$ and $q_i\inv > \abs{\gamma_i} / d$.
Therefore, 
\begin{equation} \label{eq:edge_pf1}
\sup_{\lambda \in [\half \lambda_c, \lambda_c ) }
\bignorm{ \eqref{eq:tau_alpha_decomp} }_{L^r(B)}  < \infty
\quad \text{when} \quad
\frac 1 r > \frac { 2 + \abs{\alpha_1} + \abs{\alpha_2} 
		+  \sum_{i=1}^n \abs{\gamma_i}  } d
= \frac { 2+ \abs \alpha } d .
\end{equation}
In particular, we can take $r=1$, since $\abs \alpha \le a < d-2$ by assumption.

On the complement $B^c = \{ \abs k >1 \}$, 
the other alternative of the infrared bound gives $ [1 - \hat J_\lam(k)]\inv \lesssim \abs  k^{-2} \vee 1 = 1$, so 
\begin{equation}
\bigabs{ \eqref{eq:tau_alpha_decomp} }
\lesssim 
\bigabs{ \grad^{\alpha_1} \hat \adj }
\bigabs{ \grad^{\alpha_2} \hat J_\lam }
\prod_{i=1}^n \bigabs{ \grad^{\gamma_ i} \hat J_\lam  } .
\end{equation}
Lemma~\ref{lem:fourier}(ii) then immediately implies that
\begin{equation}
\sup_{\lambda \in [\half \lambda_c, \lambda_c ) }
\bignorm{ \eqref{eq:tau_alpha_decomp} }_{L^r(B^c)}  < \infty
\quad \text{when} \quad
\Big( \frac{ -2 + \abs \alpha  } d \Big) \vee 0
\le \frac 1 r \le 1 .
\end{equation}
Combined with \eqref{eq:edge_pf1}, we get
\begin{equation} \label{eq:edge_pf2}
\sup_{\lambda \in [\half \lambda_c, \lambda_c ) }
\bignorm{ \eqref{eq:tau_alpha_decomp} }_{L^r(\Rd)}  < \infty
\quad \text{when} \quad
\frac{ 2 + \abs \alpha  } d 
< \frac 1 r \le 1 ,
\end{equation}
which, in particular, implies the local integrability of \eqref{eq:tau_alpha_decomp} and the differerentiability of $\hat \tau_\plus$.
The derivative $\grad^\alpha \hat \tau_\plus$ inherits the same estimates of $L^r(\Rd)$ norms as in \eqref{eq:edge_pf2}, and the $\abs \alpha = a$ cases provide what we need to conclude the proof. 
\end{proof}

\subsubsection{Triangle}

\begin{proof} [Proof of Lemma~\ref{lem:simple}(iii)]
Let $b = 2$. It is convenient to bound one of the $\tau_\lam$ by $\tilde \tau_\lam$, so we have
\begin{equation}
T \supb (u) \le \lambda^2 \Big( \big[ \abs x^b \varphi(x) \big] * \taul * \taul \Big)(u)
	+ \lambda^2 ( \tau_\plus \supb * \taul * \taul )(u) .
\end{equation}
We want to estimate its sup norm.
By the Cauchy--Schwarz inequality, the first term is bounded by $ \lam_c^2 \norm{ \abs x^b \varphi(x) }_1 \norm{ \tau_\lam }_2^2$, which is finite and uniform in $\lambda$ by \eqref{eq:adj_hyp} in Assumption~\ref{ass:adj} and Lemma~\ref{lem:base}.
For the second term, we use \eqref{eq:triangle_fourier} and estimate the $L^1$ norm of the integrand of the Fourier integral. 
It suffices to prove
\begin{equation} \label{eq:triangle_goal}
\sup_{\lambda \in [\half \lambda_c, \lambda_c ) }
\biggnorm{
\Big( \grad^\alpha \hat \tau_\plus  \Big)
\Big( \frac {\hat J_\lambda}{ 1 - \hat J_\lambda} \Big)^2
}_1 < \infty
\end{equation}
for every multi-index $\alpha$ with $\abs \alpha \le 2$.

Recall the estimate on $\grad^\alpha \hat \tau_\plus$ obtained in \eqref{eq:edge_pf2} and the infrared bound $[1 - \hat J_\lam(k)]\inv \lesssim \abs k^{-2} \vee 1$. On the ball $B = \{ \abs k \le 1 \}$, we have 
\begin{equation}
\biggnorm{
\Big( \grad^\alpha \hat \tau_\plus  \Big)
\Big( \frac {\hat J_\lambda}{ 1 - \hat J_\lambda} \Big)^2
}_{L^p(B)}
\lesssim \bignorm{ \grad^\alpha \hat \tau_\plus }_{L^r(B) }
	\biggnorm{ \frac 1 {\abs k^4 } }_{L^q(B)} \norm{ \hat J_\lambda }_\infty^2,
\end{equation}
where $p$ satisfies $p\inv = r\inv + q \inv$.
By \eqref{eq:edge_pf2} and \eqref{eq:Jhat_small}, this is finite and uniform in $\lambda$ when $r\inv > (2 + \abs \alpha) /d$ and $q \inv > 4/d$. We can take $p=1$ because $\abs \alpha \le 2 < d- 6$ (we assume $d > 8$).
On the complement $B^c = \{ \abs k > 1 \}$, we simply use the infrared bound to get
\begin{equation}
\biggnorm{
\Big( \grad^\alpha \hat \tau_\plus  \Big)
\Big( \frac {\hat J_\lambda}{ 1 - \hat J_\lambda} \Big)^2
}_{L^1(B^c)}
\lesssim \bignorm{ \grad^\alpha \hat \tau_\plus }_{L^1(B^c) }
	\norm{ \hat J_\lambda }_\infty^2 ,
\end{equation}
which is again finite and uniform in $\lambda$ by \eqref{eq:edge_pf2} and \eqref{eq:Jhat_small}.
Together, we obtain \eqref{eq:triangle_goal}, and the proof is complete.
\end{proof}

\begin{remark} [Square]
\label{remark:square}
For later use, we note that a slight modification of the above proof (set $b=0$) yields that, in dimensions $d > 8$,
\begin{equation}
\bar S := \sup_{\lambda \in [\half \lambda_c, \lambda_c ) }
	\lambda^3 \norm{ \tau_\lam * \tau_\lam * \tau_\lam * \tau_\lam}_\infty < \infty .
\end{equation}
\end{remark}

\subsubsection{Bubble}

\begin{proof} [Proof of Lemma~\ref{lem:simple}(ii)]
Let $a,b < d-2$. We want to get an estimate on $W_p \supab(u)$ that is uniform in $u$ and $\lambda$.
By H\"older's inequality, for any $u$ we have
\begin{equation}
W_p \supab (u)  
= \bignorm{  \tilde \tau_\lambda\supa (x) 
	\tilde \tau_\lambda\supb ( u + x) }_{ L^p_x }
\le \bignorm{  \tilde \tau_\lambda\supa}_s
	\bignorm{  \tilde \tau_\lambda\supb}_r ,
\end{equation}
where $p\inv = s\inv + r\inv$.
We use part~(i) to estimate the norms on the right-hand side.
By taking $s=r=\infty$, we get $W_\infty \supab (u) \le \bar E_\infty\supa \bar E_\infty \supb < \infty$, so we only need to work on the lower boundary of $p$. 

If both $a, b < \half d - 2$, we can take $s=r=2$ to get that $W_1 \supab (u) \le \bar E_2\supa \bar E_2 \supb < \infty$.
This gives both \eqref{eq:bubble_small} and \eqref{eq:bubble_large} for these values of $a,b$, since $p^*_\ab > 1$ when $a+b > 2$.
If one of $a$ and $b$, say $a$, is larger than $\half d - 2$, 
due to conditions on $s$ and $r$ imposed by part~(i), we can only estimate for $p$ that satisfies
\begin{equation} \label{eq:Lp_bubble_range}
\frac 1 p <  \frac{ d - 2 - a } d 
		+  \biggl( \half \wedge \frac{ d - 2 - b } d \bigg). 
\end{equation}
The upper limit $a + b < \frac 3 2 d - 4$ in \eqref{eq:bubble_large} is precisely to ensure that we get an estimate on the desired range of $p$.
Indeed, since $a+b < \frac 3 2 d -4$, the right-hand side of \eqref{eq:Lp_bubble_range} is strictly larger than $\half$, so we can take $p=2$ and obtain $W_2 \supab (u) \le \bar E_s\supa \bar E_r \supb < \infty$ for some admissible $s$ and $r$.
Also, using $d > 8$ and $b \ge 0 > 4 - \half d$, the right-hand side of \eqref{eq:Lp_bubble_range} satisfies 
\begin{equation} \label{eq: bubble_pf}
\frac{ d - 2 - a } d  +  \bigg( \half \wedge \frac{ d - 2 - b } d \bigg)
> \frac{ d - (a + b) + 2 } d = \frac 1 {p^*_\ab} .
\end{equation}
This implies the existence of $p = p_\ab < p^*_\ab$ for which \eqref{eq:Lp_bubble_range} holds, and we get $W_{p_\ab} \supab (u) \le \bar E_s\supa \bar E_r \supb < \infty$ for some other admissible $s$ and $r$.
This concludes the proof.
\end{proof}

\begin{remark} \label{remark:bubble}
In this proof, the condition $d > 8$ is only used to justify $b > 4 - \half d$ near \eqref{eq: bubble_pf}.
One can assume $d \ge 7$ and  $a,b > \half$ instead, to work in dimension $d = 7$ or $8$.
\end{remark}

\subsubsection{Martini}

\begin{proof}[Proof of Lemma~\ref{lem:simple}(iv)]
Let $a < d-2$ and $b = 0 \text{ or }2$.
It suffices to show both $\bar H_1 \supab$ and $\bar H_\infty \supab$ are finite.
For $\bar H_\infty \supab$, we use Lemma~\ref{lem:H_infty} to bound
\begin{equation}
\bar H_\infty \supab
\le \lambda_c^2 \norm{\tau\crit}_2^2 \bar E\supa \bar T\supk 0
	\bar W_1 \supk{b,0} ,
\end{equation}
which is finite by Lemma~\ref{lem:base} and parts (i) and~(ii) of Lemma~\ref{lem:simple}.
For $\bar H_1 \supab$, 
we prove a uniform estimate on $H_1 \supab(u,v)$.
In the definition \eqref{eq:def_Y}--\eqref{eq:def_H}, 
we bound $ \tau_\lam \supa (z_2 - z_1)$ by its sup norm, 
and then we write the integrals over $z_2$ and $x$ as convolutions. 
This gives
\begin{align}
H_1 \supab (u,v) 
&\le  \bignorm{  \tau_\lam \supa }_\infty
	\int_{\R^{3d}}
	(\tau_\lam \supb * \taul * \taul )(y - v - z_3)
\nl &\qquad\qquad\qquad\qquad
\times \tau_\lam (z_1)   \tau_\lam (z_3 - z_1)  
	\tau_\lam (y - z_3) \tau_\lam (y - u)
	\lambda^4 \dd z_1 \dd z_3 \dd y  \nl
& \le \lam^4 \bar E\supa 
	\bignorm{  \tlam \supb * \tlam * \tlam }_\infty
	(\tlam * \tlam * \tlam * \tlam)(u)  \nl
&\le \lam\inv \bar E\supa \bar T\supb \bar S
\end{align}
for all $u,v\in \Rd$.
Taking the supremum then gives
$\bar H_1 \supab \le (\half \lam_c)\inv \bar E\supa \bar T\supb \bar S$,
which is finite by previous parts of the lemma, Lemma~\ref{lem:base}, and Remark~\ref{remark:square}.
This concludes the proof.
\end{proof}

\section{Proof of Proposition~\ref{prop:diagram}}
\label{sec:diagram}

In this section, we prove Proposition~\ref{prop:diagram}.
The proof uses \emph{diagrammatic estimates} for the function $\Pi_\lam(x)$, which is defined (for $\lambda\leq\lambda_c$) by an absolutely convergent series
\begin{equation} \label{eq:def_Pi}
\Pi_\lambda(x) = \sum_{n=0}^\infty (-1)^n \Pi_\lambda\supn(x) ,
\end{equation}
where $\Pi_\lambda\supn(x)$ are non-negative functions \cite[Definition~3.7]{HHLM26}.
Note the superscript $(n)$ on $\Pi_\lambda\supn$ does \emph{not} mean multiplying $\Pi_\lam(x)$ by $\abs x^n$. 
We will estimate each $\Pi_\lambda \supn(x)$ separately.

In our bounds we use a quantity $V\crit$. Following the notation of \cite{DH22}, we define
\begin{equation}
	\triangle = \lambda_c^2\norm*{\tau\crit^{*3}}_\infty, \qquad \triangle^{\circ\circ} =  \lambda_c^2\norm*{\tau\crit^{*3}}_\infty +  \lambda_c\norm*{\tau\crit * \tau\crit}_\infty +1,
\end{equation}
and the composite quantities
\begin{equation} \label{eq:def_UV}
	U\crit= \triangle\triangle^{\circ\circ} + (\triangle^{\circ\circ})^2 + \lambda_c\norm*{\connf}_1,
	\qquad 
	V\crit = 4(\triangle\triangle^{\circ\circ}U\crit)^\frac{1}{2}.
\end{equation}
The conditions in Assumption~\ref{ass:adj} imply the conditions in \cite{DH22}, and it is proven there (Proposition~7.1) that $U\crit = O(1)$ and $V\crit\to 0$ as $d\to\infty$. In particular, for sufficiently large $d$ we have $V\crit<1$.

\begin{proposition} \label{prop:Pi_n}
Under the hypotheses of Proposition~\ref{prop:diagram}, 
there is a constant $C < \infty$ (depending only on the diagrams in the hypothesis) such that, for all $n \ge 0$,
\begin{equation}
\sup_{\lambda \in [\half \lambda_c, \lambda_c ) }
\bignorm{ \abs x^{\phi+\theta} \Pi_\lam \supn(x) }_{p} 
\le C (2n+1)^{2+\phi+\theta} \times 
\begin{cases}
1 					&(n=0,1,2,3) \\
V_{\lambda_c}^{n-4}	&(n\ge 4).
\end{cases}
\end{equation}
\end{proposition}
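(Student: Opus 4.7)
The plan is to prove Proposition~\ref{prop:Pi_n} by a diagrammatic estimate that parallels the one in \cite[Section~7]{HHLM19} (which handles the case $\phi=\theta=0$ and $p=1$) but incorporates both the weight $|x|^{\phi+\theta}$ and the $L^p$ norm. First I would recall from \cite[Definition~3.7]{HHLM19} that each $\Pi_\lambda^{(n)}(x)$ is represented as an integral of a product of at most $3n+1$ factors of $\tau_\lambda$ or $\tilde\tau_\lambda$, organised along a ``backbone'' of $n+1$ vertices $0=y_0,y_1,\ldots,y_n$ and a terminating vertex $y_{n+1}=x$, together with additional ``shortcut'' $\tau_\lambda$-lines creating the standard $\psi_0,\psi,\bar\psi$ blocks. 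The backbone has at most $2n+1$ edges connecting the vertices along which $x$ is traversed, so by the triangle inequality
\begin{equation}
|x|^{\phi+\theta} \le (2n+1)^{\phi+\theta-1}\sum_{i=0}^{2n}|e_i|^{\phi+\theta},
\end{equation}
where $e_0,\ldots,e_{2n}$ are the vector differences along one fixed backbone path.

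Second, I would distribute $|x|^{\phi+\theta}$ into a single $\tau_\lambda$-line: in the $i$-th term of the sum, the weight attaches to exactly one edge, turning a $\tilde\tau_\lambda$ into $\tilde\tau_\lambda^{(\phi+\theta)}$, or splitting it as $\tilde\tau_\lambda^{(\phi)}\tilde\tau_\lambda^{(\theta)}$ across two adjacent edges if the weighted edge happens to be an ``internal'' bubble edge. The resulting diagram then contains exactly one weighted bubble of the type bounded by $\bar W_p^{(\phi,\theta)}$ (or $\bar H_p^{(\phi,\theta)}$, $\bar H_p^{(\phi,0)}$ when the weight falls inside the more involved $\psi$-block), while every other component is bounded by an unweighted triangle or bubble. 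A straightforward sum over the $2n+1$ positions of the weighted edge, and the $O((2n+1))$ choices of which block the weighted edge sits in, yields the $(2n+1)^{2+\phi+\theta}$ combinatorial factor (the extra $(2n+1)^2$ absorbs the number of blocks and the number of ways to split the weight $\phi+\theta$ between the two adjacent edges of an internal bubble).

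Third, I would apply an $L^p$ Young-type estimate to the diagram: for each $n \ge 4$, the $n+1$ blocks can be separated so that the block containing the weighted bubble or martini contributes $\bar W_p^{(\phi,\theta)}$ or $\bar H_p^{(\phi,\theta)}$, the two ``end'' blocks contribute $\bar E^{(0)}$ and $\bar E^{(0)}$ (or $\norm{|x|^\theta\adj(x)}_1$-type constants), and each of the remaining $n-3$ intermediate blocks is an unweighted triangle/bubble that factorises off and contributes the universal small constant $V_{\lambda_c}$ defined in \eqref{eq:def_UV}. This mirrors the block decomposition used in the convergence proof of the lace expansion in \cite{HHLM19, DH22} and uses in an essential way that the weighted quantity appears only once in the diagram. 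The $L^p$ norm is handled by taking the $L^p$ norm inside the outermost spatial integral and using Young's/H\"older's inequality to separate the weighted block (whose $L^p$ norm is $\bar W_p^{(\phi,\theta)}$ or $\bar H_p^{(\phi,\theta)}$) from the unweighted blocks (whose $L^1$ norms are bounded by $\bar T^{(0)}$, $\bar W_1^{(0,0)}$, etc.). For $n=0,1,2,3$, no $V_{\lambda_c}$-factor is extracted and the diagram is bounded directly by the hypothesised finite quantities.

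The main obstacle will be the bookkeeping of the many diagrammatic contributions to $\Pi_\lambda^{(n)}$ (the cases coming from the various $\psi_0,\psi,\bar\psi$ blocks and the different ways that a weighted edge can sit inside each block) and showing that every case can indeed be bounded using only the six inputs $\bar E^{(\phi)},\bar E^{(\theta)},\bar W_p^{(\phi,\theta)},\bar T^{(\theta)},\bar H_p^{(\phi,\theta)},\bar H_p^{(\phi,0)}$ together with the unweighted diagrams $\bar E^{(0)},\bar T^{(0)},\bar W_1^{(0,0)}$ controlled by $V_{\lambda_c}$. This is routine but lengthy, so I would carry out one representative case (the weight falling on an internal bubble line of $\psi_n$) in the main text and defer the remaining cases to Appendix~\ref{sec:DiagramBounds}, following the template of \cite[Section~7.3]{HHLM19} and \cite[Lemma~1.8]{Hara08}.
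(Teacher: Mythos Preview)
There is a genuine gap in how you distribute the weight $|x|^{\phi+\theta}$. You propose to split the full weight along a \emph{single} backbone path and then either place $|e_i|^{\phi+\theta}$ on one edge or split it as $\phi,\theta$ across two \emph{adjacent} edges. Neither option produces the quantities in the hypotheses. Putting the full weight on one edge creates $\tilde\tau_\lambda^{(\phi+\theta)}$, for which no bound is assumed (only $\bar E^{(\phi)}$ and $\bar E^{(\theta)}$ are given, not $\bar E^{(\phi+\theta)}$). Splitting across two consecutive edges yields a chain $\tilde\tau_\lambda^{(\phi)}(y-z)\,\tilde\tau_\lambda^{(\theta)}(z-w)$, which is not the bubble $W_p^{(\phi,\theta)}(u)=\bignorm{\tilde\tau_\lambda^{(\phi)}(x)\,\tilde\tau_\lambda^{(\theta)}(u+x)}_{L^p_x}$: the latter is a product of two \emph{parallel} edges (same integration variable $x$, different endpoints), not consecutive ones.

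The paper's key observation is that every $\Pi_\lambda^{(n)}$ diagram contains \emph{two edge-disjoint paths} from $0$ to $x$ (the top and bottom of the lace diagram). One applies Lemma~\ref{lem:split_power} \emph{twice}: $|x|^\phi$ is split along the top path and $|x|^\theta$ along the bottom path, producing at most $(2n+1)^2$ diagrams, each carrying a factor $(2n+1)^{\phi+\theta}$. In each resulting diagram the $\phi$-edge and the $\theta$-edge sit on opposite sides, and when they fall in the same $\psi$-block they form exactly the parallel configuration that $\bar W_p^{(\phi,\theta)}$ or $\bar H_p^{(\phi,\theta)}$ is designed to bound. The subsequent case analysis then tracks the block containing the $\phi$-edge (cases $\psi_0$, $\psi$, $\psi_n$) and the position of the $\theta$-edge relative to it (same block, earlier, or later); only the ``special'' block needs the $L^p$ estimate, while the remaining blocks are handled by Lemma~\ref{lem:left_right} to extract powers of $V_{\lambda_c}$. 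Your overall architecture (extract one special weighted block, bound the rest by $V_{\lambda_c}$ powers) is right, but the mechanism for placing the two weights must exploit the double-path structure rather than a single backbone.
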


\begin{proof}[Proof of Proposition~\ref{prop:diagram}]
Using \eqref{eq:def_Pi} and Proposition~\ref{prop:Pi_n}, for any $\lambda \in [\half \lam_c, \lam_c)$ we have
\begin{equation}
\bignorm{ \abs x^{\phi+\theta} \Pi_\lam(x) }_{p} 
\le \sum_{n=0}^\infty \bignorm{ \abs x^{\phi+\theta} \Pi_\lam \supn(x) }_{p} 
\le O(1) + C \sum_{n=4}^\infty (2n+1)^{2+\phitheta} V\crit^{n-4} ,
\end{equation}
which is finite because the series converges when $V\crit < 1$.
\end{proof}

It remains to prove Proposition~\ref{prop:Pi_n}. 
In Section~\ref{subsec:diagram}, 
we recall the diagrammatic estimates and introduce our strategy to bound diagrams weighted by $\abs x^\phitheta$, using $\Pi_\lam \supk 0$ and (one diagram of) $\Pi_\lam \supk 1$ as examples.
In Section~\ref{subsec:general_n}, 
we use a more involved but systematic way to prove Proposition~\ref{prop:Pi_n} for all $n\ge 1$.

\subsection{Diagrammatic estimates and strategy}
\label{subsec:diagram}

Recall the function $\tau_\plus(x)$ defined in \eqref{eq:def_tilde_tau}.
The simplest case of the diagrammatic estimates is for $\Pi_\lam \supk 0$, which is given in \cite[(4.3)]{HHLM26} as
\begin{equation} \label{eq:Pi0_bound}
\Pi_\lambda\supzero (x)  \le \half \tau_\plus(x)^2  .
\end{equation}
In general, in the upper bound of $\Pi_\lam \supk n(x)$,
there are always two edge-disjoint ``paths'' connecting $0$ and~$x$.
For $n=0$, the two paths are just the two copies of $\tau_\plus(x)$ in \eqref{eq:Pi0_bound}.
Our strategy is to distribute $\abs x^\phi$ and $\abs x^\theta$ along the two paths, and then to reduce the resultant diagrams into the diagrams assumed finite in the hypotheses.

\begin{proof}[Proof of Proposition~\ref{prop:Pi_n} for $n=0$]
We use \eqref{eq:Pi0_bound}.
Since $\tau_\plus \le \tilde \tau_\lambda$ by definition, 
for any $\lambda \in [\half \lam_c, \lam_c)$ we have
\begin{equation}
\bignorm{ \abs x^\phitheta  \Pi_\lambda\supzero(x) }_p
\le \half \bignorm{ \abs x^\phi \tilde \tau_\lambda(x) 
	\abs x^\theta \tilde \tau_\lambda(x) }_p
= \half W_p \supphitheta(0)
\le \bar W_p \supphitheta ,
\end{equation}
which is assumed finite in the hypotheses of Proposition~\ref{prop:diagram}.
\end{proof}

Diagrammatic estimates for $\Pi_\lam \supn$, $n\ge 1$, are more involved to write, and we use a graphical notation as in Figure~\ref{fig:psi}. 
The basic objects in the graphical notation are vertices $\blackcirc, \circ$ and edges like $\tauLine, \adjLine, \tauCircleLine$.
An edge represents a function evaluated at the displacement between its two endpoints, \eg, 
$ 0 \tauLine x = \tau_\lambda(x)$ and $ 0 \adjLine x = \adj(x)$.
The vertex $\blackcirc$ indicates that that vertex is integrated over $\Rd$ \emph{with a vertex factor $\lambda$}, 
while the vertex $\circ$ is not integrated and does not carry a vertex factor. 

The bounds on $\Pi_\lambda\supn$ are given in terms of the following $\psi$ functions, whose graphical notation is given in Figure~\ref{fig:psi}. 
Like $\Pi_\lambda\supn$, the superscripts on the $\psi$ functions do not mean multiplying $\psi$ by a power of $\abs x$.

\begin{definition} \label{def:psi}
Let $r,s,u,w,x\in\Rd$. 
\begin{enumerate}
\item
Define $\tlamo(x) = 0 \tauCircleLine x = \lambda\inv\delta(x) + \tlam(x)$,
where $\delta(x)$ is the Dirac delta. 
This means that the edge in the graphical notation can be contracted.

\item
Define $\psi_0 = \psi_0^{(1)}+\psi_0^{(2)}+\psi_0^{(3)}$, where
\begin{equation*} \begin{aligned}
\psi_0^{(1)}(w,u;s) &= \tlam(u-s)\tlam(u-w)\tlam(w-s),	\\
\psi_0^{(2)}(w,u;s) &= \lambda\inv\delta(w-s)\tlam(u-s)\lambda(\tlam*\tlam)(u-s),\\
\psi_0^{(3)}(w,u;s) &= \lambda\inv \delta(w-s) \connf(u-s) .
\end{aligned} \end{equation*}

\item
Define $\psi = \psi^{(1)}+\psi^{(2)}+\psi^{(3)} + \psi^{(4)}$, where
\begin{equation*} \begin{aligned}
\psi^{(1)}(w,u;r,s) &= \tlam(w-u)\int_{\R^{2d}} \tlamo(t-s) \tlam(t-w)\tlam(u-z)\tlam(z-t)\tlam(z-r) \lambda^2 \dd z \dd t, \\
\psi^{(2)}(w,u;r,s) &= \tlamo(w-s)\int_{\R^{2d}} \tlam(t-z)\tlam(z-u)\tlam(u-t)\tlamo(t-w)\tlam(z-r) \lambda^2 \dd z \dd t, \\
\psi^{(3)}(w,u;r,s) &= \tlam(u-w)\tlam(w-s)\tlam(u-r),\\
\psi^{(4)}(w,u;r,s) &= \tlam(u-w) \lambda\inv\delta(w-s)\tlam(u-r) .
\end{aligned} \end{equation*}

\item
Define $\psi_n = \psi_n^{(1)} + \psi_n^{(2)}$, where
\begin{equation*} \begin{aligned}
\psi_n^{(1)} (x;r,s) &= \int_{\R^{2d}} \tlamo(t-s)\tlam(z-r)\tlam(t-z)\tlam(z-x)\tlam(x-t) \lambda^2 \dd z \dd t,\\
\psi_n^{(2)}(x;r,s) &= \tlam(x-s)\tlam(x-r).
\end{aligned} \end{equation*}
\end{enumerate}

\begin{figure} % beginning of psi figure
    \centering
    \begin{subfigure}[b]{0.3\textwidth}
    \centering 
        \begin{tikzpicture}[scale=0.6]
        \draw (0,0) -- (1,0.6) -- (1,-0.6) -- cycle;
        \filldraw[fill=white] (0,0) circle (2pt) node[left]{$s$};
        \filldraw[fill=white] (1,0.6) circle (2pt) node[above right]{$w$};
        \filldraw[fill=white] (1,-0.6) circle (2pt) node[below right]{$u$};
        \end{tikzpicture}
    \caption{$\psi^{(1)}_0$}
    \end{subfigure}
    \hfill
    \begin{subfigure}[b]{0.3\textwidth}
    \centering
        \begin{tikzpicture}[scale=0.6]
        \draw (0,0) -- (1,0.6) -- (1,-0.6) -- cycle;
        \filldraw (0,0) circle (2pt);
        \filldraw[fill=white] (1,0.6) circle (2pt)  node[above]{$s=w$};
        \filldraw[fill=white] (1,-0.6) circle (2pt) node[below right]{$u$};
       \end{tikzpicture}
    \caption{$\psi^{(2)}_0$}
    \end{subfigure}
    \hfill
    \begin{subfigure}[b]{0.3\textwidth}
    \centering
        \begin{tikzpicture}[scale=0.6]
        \filldraw (-0.4,0) circle (0pt);
        \draw (0,0.6) -- (0,-0.6);
        \draw (0,0) circle (0pt) node[rotate=90,above]{$\sim$};
        \filldraw[fill=white] (0,0.6) circle (2pt)  node[above]{$s=w$};
        \filldraw[fill=white] (0,-0.6) circle (2pt) node[below right]{$u$}; 
        \end{tikzpicture}
    \caption{$\psi^{(3)}_0$}
    \end{subfigure}
    \begin{subfigure}[b]{0.24\textwidth}
    \centering
        \begin{tikzpicture}[scale=0.6]
        \filldraw (-0.3,0) circle (0pt); 
        \draw (0,0.6) -- (1,0.6) -- (1,-0.6) -- (0,-0.6);
        \filldraw[fill=white] (0.5,0.8) circle (2pt);
        \draw (1,0.6) -- (2,0.6) -- (2,-0.6) -- (1,-0.6);
        \draw (0,1.75) circle (0pt);
        \filldraw[fill=white] (0,0.6) circle (2pt) node[above left]{$s$};
        \filldraw[fill=white] (0,-0.6) circle (2pt) node[below left]{$r$};
        \filldraw (1,0.6) circle (2pt);
        \filldraw (1,-0.6) circle (2pt);
        \filldraw[fill=white] (2,0.6) circle (2pt) node[above right]{$w$};
        \filldraw[fill=white] (2,-0.6) circle (2pt) node[below right]{$u$};
        \end{tikzpicture}
    \caption{$\psi^{(1)}$}
    \end{subfigure}
    \hfill
    \begin{subfigure}[b]{0.24\textwidth}
    \centering
        \begin{tikzpicture}[scale=0.6]
        \filldraw (-0.3,0) circle (0pt); 
        \draw (0,0.6) -- (1.5,0.6) -- (1.5,-0.1) -- (1,-0.6);
        \filldraw[fill=white] (0.75,0.8) circle (2pt);
        \filldraw[fill=white] (1.3,0.25) circle (2pt);
        \draw (0,-0.6) -- (1,-0.6) -- (2,-0.6) -- (1.5,-0.1);
        \filldraw[fill=white] (0,0.6) circle (2pt) node[above left]{$s$};
        \filldraw[fill=white] (0,-0.6) circle (2pt) node[below left]{$r$};
        \filldraw[fill=white] (1.5,0.6) circle (2pt) node[above right]{$w$};
        \filldraw (1,-0.6) circle (2pt);
        \filldraw (1.5,-0.1) circle (2pt);
        \filldraw[fill=white] (2,-0.6) circle (2pt) node[below right]{$u$};
        \end{tikzpicture}
    \caption{$\psi^{(2)}$}
    \end{subfigure}
    \hfill
    \begin{subfigure}[b]{0.24\textwidth}
    \centering
        \begin{tikzpicture}[scale=0.6]
        \filldraw (-0.3,0) circle (0pt); 
        \draw (0,0.6) -- (1,0.6) -- (1,-0.6) -- (0,-0.6);
        \filldraw[fill=white] (0,0.6) circle (2pt) node[above left]{$s$};
        \filldraw[fill=white] (0,-0.6) circle (2pt) node[below left]{$r$};
        \filldraw[fill=white] (1,0.6) circle (2pt) node[above right]{$w$};
        \filldraw[fill=white] (1,-0.6) circle (2pt) node[below right]{$u$};
        \end{tikzpicture}
    \caption{$\psi^{(3)}$}
    \end{subfigure}
    \hfill
    \begin{subfigure}[b]{0.24\textwidth}
    \centering
        \begin{tikzpicture}[scale=0.6]
        \filldraw (-0.3,0) circle (0pt); 
        \draw (1,0.6) -- (1,-0.6) -- (0,-0.6);
        \filldraw[fill=white] (0,-0.6) circle (2pt) node[below left]{$r$};
        \filldraw[fill=white] (1,0.6) circle (2pt) node[above]{$s=w$};
        \filldraw[fill=white] (1,-0.6) circle (2pt) node[below right]{$u$};
        \end{tikzpicture}
    \caption{$\psi^{(4)}$}
    \end{subfigure}
    \hspace*{\fill}%
    \begin{subfigure}[b]{0.45\textwidth}
    \centering
        \begin{tikzpicture}[scale=0.6]
        \filldraw (-0.3,0) circle (0pt); 
        \draw (0,0.6) -- (1,0.6) -- (1,-0.6);
        \filldraw[fill=white] (0.5,0.8) circle (2pt);
        \draw (1,0.6) -- (2,0) -- (1,-0.6) -- (0,-0.6);
        \filldraw[fill=white] (0,0.6) circle (2pt) node[above left]{$s$};
        \filldraw[fill=white] (0,-0.6) circle (2pt) node[below left]{$r$};
        \filldraw (1,0.6) circle (2pt);
        \filldraw (1,-0.6) circle (2pt);
        \filldraw[fill=white] (2,0) circle (2pt) node[right]{$x$};
        \end{tikzpicture}
    \caption{$\psi^{(1)}_n$}
    \end{subfigure}
    \hfill
    \begin{subfigure}[b]{0.45\textwidth}
    \centering
        \begin{tikzpicture}[scale=0.6]
        \filldraw (0.7,0) circle (0pt); 
        \draw (1,0.6) -- (2,0) -- (1,-0.6);
        \filldraw[fill=white] (1,0.6) circle (2pt) node[above left]{$s$};
        \filldraw[fill=white] (1,-0.6) circle (2pt) node[below left]{$r$};
        \filldraw[fill=white] (2,0) circle (2pt) node[right]{$x$};
        \end{tikzpicture}
    \caption{$\psi^{(2)}_n$}
    \end{subfigure}
    \hspace*{\fill}
    \caption{Graphical notation for the $\psi$ functions.
    Each $\blackcirc$ has a vertex factor $\lambda$.     }
    \label{fig:psi}
\end{figure} % end of figure
\end{definition}

\begin{lemma}[Diagrammatic estimate]
\label{lem:Pi_bound}
Let $n \geq 1$, $x\in\Rd$, and $\lambda\in [0,\lambda_c)$. Then
\begin{equation} \label{eq:Pi_bound}
\Lacelam^{(n)}(x) 
\leq  \int_{\R^{n2d}}  \psi_n(x;w_{n-1},u_{n-1}) 
	\Biggl( \prod_{j=1}^{n-1} \psi(w_j,u_j;w_{j-1},u_{j-1}) \Biggr)
	\psi_0(w_0,u_0;\orig) 	\lambda^{2n} \dd\vec w \dd\vec u ,
\end{equation}
where $\vec w = (w_0, \dots, w_{n-1})$ and
$\vec u = (u_0, \dots, u_{n-1})$.
\end{lemma}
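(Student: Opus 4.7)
The plan is to derive this estimate as a reorganisation of the diagrammatic upper bounds on $\Pi_\lambda^{(n)}$ already established in the lace expansion framework of \cite{HHLM19}. Recall that $\Pi_\lambda^{(n)}(x)$ is defined in \cite[Definition~3.7]{HHLM19} as a sum of nested expectations built from $n+1$ ``sausages'' (connected clusters) joined by $n$ pivotal edges along a backbone from $\orig$ to $x$. My derivation will proceed by repeatedly applying the BK-type inequality for the random connection model, as in \cite[Section~4]{HHLM19}, to decouple the connections across each pivotal bond, producing a sum of convolutions in which each factor is a small ``block'' diagram.

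First I would match the initial block at the origin to $\psi_0$. The three sub-cases $\psi_0^{(1)},\psi_0^{(2)},\psi_0^{(3)}$ correspond to three topological possibilities for how the pair of backbone paths leaves $\orig$: either the two paths separate immediately at distinct vertices (yielding a triangle, $\psi_0^{(1)}$), or they first coincide at a single vertex before separating (the delta factor $\lambda^{-1}\delta(w-s)$ captures this degeneracy in $\psi_0^{(2)}$), or one of the emerging edges is a single adjacency $\connf$ with no intermediate cluster structure ($\psi_0^{(3)}$). Next I would identify the middle blocks $\psi$ connecting the pair $(r,s)$ at one pivotal bond to the pair $(w,u)$ at the next: the four sub-cases $\psi^{(1)}$--$\psi^{(4)}$ enumerate the topologies for how the two paths rearrange across the intervening sausage, including the merged-vertex degeneracy ($\psi^{(4)}$ handles $w=s$ via its delta factor). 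Finally the terminal block $\psi_n$ closes the diagram at $x$, where $\psi_n^{(1)}$ handles a generic last sausage and $\psi_n^{(2)}$ handles the degenerate closure in which $x$ inherits directly from the preceding pivotal pair.

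Once the catalogue of blocks is aligned with the HHLM bounds, the next step is to verify that the full nested expectation for $\Pi_\lambda^{(n)}$ factorises as the stated convolution over the intermediate vertices $w_j,u_j$. This will follow by iterated conditioning on the pivotal backbone together with the independence of the resulting sausages in the Palm random graph, in the spirit of \cite[Proof of Proposition~4.2]{HHLM19}. The factor $\lambda^{2n}$ in \eqref{eq:Pi_bound} will arise from the $\lambda$-weights on the $2n$ internal Palm vertices $(w_j,u_j)_{j=0}^{n-1}$, which is consistent with how the Palm--Mecke formula generates one $\lambda$ per integrated vertex.

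The hard part will be the bookkeeping of the topological cases. The case analysis is lengthy but finite, and my intention is to treat the two endpoints of each sausage simultaneously so that the merged-vertex cases appear transparently as the $\delta$-function entries in the $\psi$ definitions, rather than as separate diagrams. No new probabilistic input beyond \cite{HHLM19} is needed; the content of the proof lies entirely in repackaging the existing estimate into the compact concatenated form of \eqref{eq:Pi_bound}, which is exactly the form that makes the subsequent $L^p$ moment analysis in Section~\ref{subsec:general_n} and Appendix~\ref{sec:DiagramBounds} tractable.
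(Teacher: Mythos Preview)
Your proposal is correct in spirit but unnecessarily elaborate. The paper's proof is far simpler: it observes that the claimed bound is precisely \cite[Proposition~7.2]{HHLM19}, modulo a change of conventions in the definition of the $\psi$ functions. Specifically, compared to \cite{HHLM19}, the paper's $\psi_0$ and $\psi$ each carry an extra factor of $\lambda^{-1}$, and the interior vertices of $\psi$ and $\psi_n$ have been integrated out; this bookkeeping accounts for why the power of $\lambda$ in \eqref{eq:Pi_bound} is $\lambda^{2n}$ rather than the $\lambda^n$ appearing in \cite[Proposition~7.2]{HHLM19}. That is the entire proof.

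What you outline---decoupling sausages via BK-type inequalities, matching topological cases to the $\psi^{(i)}$'s, and tracking Palm--Mecke factors---is essentially a re-derivation of \cite[Proposition~7.2]{HHLM19} itself. There is nothing wrong with that as a strategy, and your identification of the degenerate $\delta$-cases with merged vertices is accurate. But since the result is already established in the cited reference, the paper simply invokes it and records the translation between the two sets of conventions. Your approach buys a self-contained account at the cost of reproducing a lengthy case analysis that the paper deliberately outsources.
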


\begin{proof}
This is \cite[Proposition~7.2]{HHLM26},
although we have defined the $\psi$ functions slightly differently.
Compared to the $\psi$ functions in \cite{HHLM26},
our $\psi_0$ contains an extra factor of $\lambda\inv$,
our $\psi$ contains an extra factor of $\lambda\inv$ and has its interior vertices integrated,
and our $\psi_n$ has its interior vertices integrated.
This explains the power $\lambda^{2n}$ in \eqref{eq:Pi_bound}, comparing to the power $\lambda^n$ in \cite[Proposition~7.2]{HHLM26}.
\end{proof}

The $n=1$ case of Lemma~\ref{lem:Pi_bound} can be represented graphically, with the unlabeled vertex $\circ$ being~$0$, as
\begin{multline} \label{eq:Pi1_bound}
\Pi_\lambda\supk1(x) \le 
 \PiOneOneOne + \PiOneOneTwo  + \PiOneTwoOne  
+ \PiOneTwoTwo \\+ \PiOneThreeOne + \PiOneThreeTwo .
\end{multline}
Observe that two edge-disjoint paths from $0$ and $x$ always exist, namely, on the two sides of the diagram.
We explain the proof strategy using the first of the six diagrams. 
As for $\Pi_\lam \supk 0$,
we split $\abs x^\phi$ and $\abs x^\theta$ across two sides, using the following lemma. 

\begin{lemma}[Splitting of power] \label{lem:split_power}
Let $a \ge 0$, $N\in \N$, and $x_0, \dots, x_N \in \Rd$. 
Then
\begin{equation} \label{eq:power_split}
\abs{ x_N - x_0 }^a 
\le N^{a} \sum_{j=1}^N \abs{ x_j - x_{j-1} }^a.
\end{equation}
\end{lemma}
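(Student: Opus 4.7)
The plan is to reduce the statement to a two-step elementary argument: first use the triangle inequality to dominate $|x_N-x_0|$ by the sum of consecutive increments, and then apply a standard inequality controlling a power of a sum by a sum of powers.

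First I would write, by a telescoping argument and the triangle inequality in $\Rd$,
\begin{equation}
\abs{x_N - x_0} = \biggabs{ \sum_{j=1}^N (x_j - x_{j-1}) } \le \sum_{j=1}^N \abs{x_j - x_{j-1}}.
\end{equation}
Raising both sides to the non-negative power $a$ preserves the inequality, so it remains to bound $\bigl(\sum_{j=1}^N y_j\bigr)^a$ by $N^a \sum_{j=1}^N y_j^a$ with $y_j = \abs{x_j - x_{j-1}} \ge 0$.

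For the second step I would split into two cases according to the value of $a$. If $a \ge 1$, then $t \mapsto t^a$ is convex on $[0,\infty)$, so by Jensen's inequality applied to the uniform probability measure on $\{1,\dots,N\}$ (equivalently, the power mean inequality) we get
\begin{equation}
\Bigl( \sum_{j=1}^N y_j \Bigr)^a
= N^a \Bigl( \frac 1 N \sum_{j=1}^N y_j \Bigr)^a
\le N^a \cdot \frac 1 N \sum_{j=1}^N y_j^a
= N^{a-1} \sum_{j=1}^N y_j^a
\le N^a \sum_{j=1}^N y_j^a.
\end{equation}
If $0 \le a \le 1$, the map $t \mapsto t^a$ is concave on $[0,\infty)$ with value $0$ at the origin, hence subadditive, which yields $\bigl(\sum y_j\bigr)^a \le \sum y_j^a \le N^a \sum y_j^a$ since $N^a \ge 1$. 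Combining both cases with the triangle inequality gives \eqref{eq:power_split}.

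There is no real obstacle here: the lemma is purely elementary, and the factor $N^a$ (rather than the sharper $N^{(a-1)\vee 0}$) is chosen for convenience in subsequent applications where $a \in \{\phi, \theta\}$ and $N$ ranges over the small number of segments used to split a displacement in the diagrammatic estimates of Section~\ref{sec:diagram}.
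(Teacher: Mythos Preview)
Your proof is correct, but it differs from the paper's argument. The paper avoids the case split on $a$ by observing that since $|x_N-x_0|\le\sum_{j=1}^N |x_j-x_{j-1}|$, there must exist some index $j$ with $|x_j-x_{j-1}|\ge \frac1N|x_N-x_0|$; raising this single inequality to the power $a$ and then bounding $|x_j-x_{j-1}|^a$ by the full sum $\sum_{j}|x_j-x_{j-1}|^a$ gives the result in one stroke. Your route via Jensen (for $a\ge1$) and subadditivity of $t\mapsto t^a$ (for $0\le a\le1$) is equally valid and in fact yields the sharper constant $N^{(a-1)\vee 0}$ you noted, but the paper's pigeonhole-style argument is shorter and treats all $a\ge0$ uniformly without appealing to convexity or concavity.
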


\begin{proof}
Since $x_N - x_0 = \sum_{j=1}^N (x_j - x_{j-1})$, 
by the triangle inequality, 
there must be an index $j$ for which 
\begin{equation}
\abs{ x_j - x_{j-1} } \ge \frac 1 N \abs{ x_N - x_0 }.
\end{equation}
Raising to the power $a$ and then summing over all $j$ gives the desired inequality.
\end{proof}

We also need to use a few more diagrams that are finite under the hypothesis of Proposition~\ref{prop:diagram} by the following lemma.
Note that $\bar B_1 \supk{\theta,0} \le \lambda_c \bar W_1 \supk{\theta,0}$.

\begin{lemma} \label{lem:extra}
Let $\phi, \theta \ge 0$ and $p \in [1,\infty]$.
There are (absolute) constants such that
\begin{align}
\bar W_p \supk{\phi,0}
	&\lesssim \bar W_p \supk{\phi,\theta} + \bar E \supphi ,\\
\bar B_1 \supk{\theta,0} 
	:= \sup_{\lambda \in [\half \lambda_c, \lambda_c ) }
		\lambda \norm{ \tau_\lam\suptheta * \tau_\lam }_\infty 
	&\lesssim \bar E\suptheta + \bar T\suptheta , \\
\label{eq:p-triangle}
\sup_{\lambda \in [\half \lambda_c, \lambda_c ) }
	\Bignorm{ \bignorm{  \tau_\lambda \suptheta (x) 
		\lambda( \tau_\lam * \tlam) ( u + x) }_{ L^p_x } }_{L^\infty_u}
	&\lesssim \bar T\suptheta + \bar E\suptheta \bar B_1\supk{0,0}   .
\end{align}
\end{lemma}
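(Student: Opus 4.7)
The plan is to prove the three estimates in turn, exploiting two basic pointwise bounds: $\tau_\lam \le \tilde\tau_\lam = \connf + \tau_\plus$, and $\connf \le \tau_\lam$ (the latter holds because a single direct edge already connects $0$ and $x$ with probability $\connf(x)$ in the Palm graph, so $\tau_\lam(x) \ge \connf(x)$). I will also use that $\tilde\tau_\lam \le \bar E\supk 0 \le 1 + \lam_c$ is a uniform absolute constant. Of the three parts, (iii) is the most delicate, since it asks for a uniform estimate across all $p \in [1,\infty]$ simultaneously.

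For~(i), I will apply the pointwise dichotomy
\begin{equation*}
\tilde\tau_\lam(u+x) \le \bar E\supk 0 \1\{\abs{u+x} \le 1\} + \abs{u+x}^\theta \tilde\tau_\lam(u+x),
\end{equation*}
valid because $\abs{u+x}^\theta \ge 1$ outside the unit ball. After multiplying by $\tilde\tau_\lam\supphi(x)$ and taking the $L^p_x$ norm, the indicator piece contributes at most $\abs{B_1}^{1/p} \bar E\supk 0 \bar E\supphi$ (by pulling out the sup norm and using that the effective support has volume $\abs{B_1}$), while the remaining term is exactly $W_p\supphitheta(u)$. Taking the supremum over $u$ and $\lam$ yields~(i) with a dimensional constant.

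For~(ii), I will substitute $\tau_\lam \le \tilde\tau_\lam = \connf + \lam \connf * \tau_\lam$ into one factor of the bubble and then apply $\connf \le \tau_\lam$ to the resulting convolution:
\begin{equation*}
\lam (\tau_\lam\suptheta * \tau_\lam)(u) \le \lam (\tau_\lam\suptheta * \connf)(u) + \lam^2 (\tau_\lam\suptheta * \connf * \tau_\lam)(u) \le \lam_c \norm\connf_1 \bar E\suptheta + \bar T\suptheta.
\end{equation*}
The first term is obtained by pulling out $\norm{\tau_\lam\suptheta}_\infty \le \bar E\suptheta$ (which follows from $\tau_\lam \le \tilde\tau_\lam$), and the second is recognised as the weighted triangle diagram bounded by $\bar T\suptheta$. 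Since $\norm\connf_1 = 1$ by Assumption~\ref{ass:adj}, this gives the claim after taking the supremum over $u$ and $\lam$.

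For~(iii), I will set $A(x,u) = \tau_\lam\suptheta(x) \lam(\tau_\lam * \tau_\lam)(u+x)$ and first establish bounds at the endpoints $p=1$ and $p=\infty$ that are uniform in $u$. At $p=\infty$, pulling out sup norms gives $\norm{A(\cdot,u)}_\infty \le \norm{\tau_\lam\suptheta}_\infty \norm{\lam \tau_\lam * \tau_\lam}_\infty \le \bar E\suptheta \bar B_1\supk{0,0}$. At $p=1$, a change of variables together with the evenness of $\tau_\lam\suptheta$ yields
\begin{equation*}
\norm{A(\cdot,u)}_{L^1_x} = \lam (\tau_\lam\suptheta * \tau_\lam * \tau_\lam)(u) \le \lam\inv \bar T\suptheta \lesssim \bar T\suptheta,
\end{equation*}
since $\lam \ge \half \lam_c$. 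For intermediate $p$, the log-convexity of $q \mapsto \norm{A(\cdot,u)}_{L^q_x}$ (H\"older's inequality) combined with the elementary bound $a^\alpha b^{1-\alpha} \le a+b$ delivers $\norm{A(\cdot,u)}_{L^p_x} \lesssim \bar T\suptheta + \bar E\suptheta \bar B_1\supk{0,0}$ uniformly in $u$ and $\lam$, which is~(iii). The subtle point here is that both endpoint estimates are uniform in $u$ by construction, so the interpolation inherits this uniformity without extra work.
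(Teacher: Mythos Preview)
Your proof is correct and follows essentially the same route as the paper's: the same dichotomy on $|u+x|$ for part~(i), the same expansion $\tau_\lam \le \connf + \lam\connf*\tau_\lam$ followed by $\connf \le \tau_\lam$ for part~(ii), and the same $L^1$/$L^\infty$ endpoint estimates plus log-convexity for part~(iii). One terminological quibble: the constant $|B_1|^{1/p}$ you call ``dimensional'' is in fact bounded uniformly in both $d$ and $p$ (the paper bounds it by $6$), so the estimates really are absolute as the lemma claims.
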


It will also be convenient to write
\begin{equation} \label{eq:contracted_triangle}
\bar T \supthetao 
= \bar T \suptheta + \bar B_1\supk{\theta,0} + \bar B_1\supk{0,0} ,
\end{equation}
which bounds an $\abs x^\theta$-decorated triangle with one possibly contracted edge.
Note $\bar B_1\supk{0,0} = \lambda_c \norm{ \tau\crit }_2^2$ is finite by Lemma~\ref{lem:base}.

\begin{proof}[Proof of Lemma~\ref{lem:extra}]
We start from $\bar W_p \supk{\phi,0}$.
In the definition of $W_p \supk{\phi,0}$ in \eqref{eq:def_Wp}, 
we bound $\tilde \tau_\lam(y) \le \abs y^\theta \tilde \tau_\lam(y)$ if $\abs y \ge 1$, and we bound $\tilde \tau_\lam(y) \le \norm \adj_\infty + \lambda \norm \adj_1 \norm{ \tau_\lam }_\infty \le 1 + \lambda$ otherwise.
This gives
\begin{equation}
W_p \supk{\phi, 0}(u)
\le  W_p \supphitheta(u) 
	+ \bar E \supphi (1+\lam) \norm{ \1_{\abs {u+x} < 1} }_{L^p_x}
\le \bar W_p \supphitheta + c_{d,p} (1+\lam_c) \bar E \supphi ,
\end{equation} 
where $c_{d,p}=1$ if $p=\infty$ and 
$c_{d,p} =  [ \pi^{d/2} / \Gamma( \frac{d+2} 2 )]^{1/p} $ if $p < \infty$. 
We simply bound $c_{d,p} \le 6$ for all $d\ge 1$ and $p\in [1,\infty]$.
Taking the supremum over $u \in \Rd$ then gives the desired result.

For $\bar B_1 \supk{\theta,0}$, 
we use $\tau_\lam \le \tilde \tau_\lam = \adj + \lambda \adj * \tau_\lam
\le  \adj + \lambda \tau_\lam * \tau_\lam$ (since by definition $\adj\leq \tau_\lam$)  to get
\begin{equation}
\lambda \norm{ \tau_\lam\suptheta * \tau_\lam }_\infty
\le \lam \bignorm{ \tau_\lam\suptheta }_\infty \norm \adj_1
	+ \lambda^2 \norm{ \tau_\lam\suptheta * \tau_\lam * \tau_\lam }_\infty
\le \lam_c \bar E\suptheta + \bar T\suptheta ,
\end{equation}
which gives the desired result.

For the last diagram, we use the log-convexity of $L^p$ norms to bound $\norm\cdot_{L^p_x} \le \norm\cdot_{L^1_x} + \norm\cdot_{L^\infty_x}$:
\begin{align}
\Bignorm{ \bignorm{  \tau_\lambda \suptheta (x) 
	\lambda( \tau_\lam * \tlam) ( u + x) }_{ L^p_x } }_{L^\infty_u}
&\le \lam\inv \bignorm{ T \suptheta(u) }_{L^\infty_u}
	+ \lam \bignorm{ \tau_\lam\suptheta }_\infty\norm{ \tlam*\tlam}_\infty\nl
&\le \lam\inv \bar T\suptheta + \bar E\suptheta \bar B_1 \supk{0,0}.
\end{align}
The desired result then follows from $\lambda\inv \le (\half \lambda_c)\inv$.
\end{proof}

\begin{proposition} \label{prop:Pi1}
Under the hypotheses of Proposition~\ref{prop:diagram}, 
there is a constant $C < \infty$ (depending only on the diagrams in the hypothesis) such that
\begin{equation}
\sup_{\lambda \in [\half \lambda_c, \lambda_c ) }
\biggnorm{ \abs x^\phitheta \times \PiOneOneOne }_{p} 
\le C 3^\phitheta .
\end{equation}
\end{proposition}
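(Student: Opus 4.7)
The plan is to apply the two-path weight distribution strategy sketched earlier for $\Lacelam^{(0)}$. The diagram $\PiOneOneOne$ corresponds to the $n=1$ contribution from $\psi_0^{(1)} \cdot \psi_n^{(1)}$ in Lemma~\ref{lem:Pi_bound}, and admits two natural edge-disjoint paths from $0$ to $x$: a top path $0 \to w_0 \to z \to x$ and a bottom path $0 \to u_0 \to t \to x$, the latter passing through the $\circ$-edge arising from $\tlamo(t-u_0)$ in $\psi_n^{(1)}$. Each path consists of three $\tlam$-edges; the remaining edges $(w_0,u_0)$ and $(z,t)$ act as rungs between the two paths.

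First I would apply Lemma~\ref{lem:split_power} twice with $N=3$: once to $\abs x^\phi$ distributed over the top path, once to $\abs x^\theta$ distributed over the bottom path. Multiplying the two distributions yields
\begin{equation*}
\abs x^{\phi+\theta} \le 3^{\phi+\theta} \sum_{i,j=1}^{3} \abs{e_i^{\rm top}}^{\phi} \abs{e_j^{\rm bot}}^{\theta},
\end{equation*}
where $e_i^{\rm top}, e_j^{\rm bot}$ denote the displacement vectors of the respective edges. This decomposes the weighted diagram into nine integrals, each featuring exactly one $\phi$-weighted top edge and one $\theta$-weighted bottom edge, all multiplied by the rest of the (unweighted) diagram.

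For each of the nine decorated integrals I would bound the $L^p_x$ norm by isolating a martini-type sub-structure (whose $L^p_x$ norm is controlled by $\bar H_p\supphitheta$, $\bar H_p\supk{\phi,0}$, or $\bar W_p\supphitheta$ after relabeling) and controlling the remaining factors—namely the left-hand triangle $\tlam(w_0)\tlam(u_0)\tlam(w_0-u_0)$ together with any loose rung edge—in $L^\infty$ via $\bar E\supphi$, $\bar E\suptheta$, $\bar T\suptheta$, and the shorthand $\bar T\supthetao$ from \eqref{eq:contracted_triangle}, applying Minkowski's integral inequality to pull the $L^p_x$ norm through the integrations over $w_0, u_0, z, t$. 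The $\circ$-edge $\tlamo(t-u_0) = \lambda\inv\delta(t-u_0) + \tlam(t-u_0)$ is split into two cases and handled separately; the contracted case saves an integration at the cost of a $\lambda\inv \le 2/\lambda_c$ factor. Lemma~\ref{lem:extra} furnishes the adjacent-weight conversions (\eg, bounding $\bar W_p\supk{\phi,0}$ by $\bar W_p\supphitheta + \bar E\supphi$) whenever a case produces an $L^p$ bubble of a slightly different weight profile than the ones in the hypothesis.

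The main obstacle is the bookkeeping rather than any conceptual difficulty: the nine $(i,j)$ cases do not uniformly reduce to a single template, because the $(\phi,\theta)$-weights sometimes land on the martini core and sometimes on an edge that must be absorbed into an $L^\infty$ bound. Each case therefore requires an individualized matching against the finite quantities $\bar E\supphi, \bar E\suptheta, \bar T\suptheta, \bar W_p\supphitheta, \bar H_p\supphitheta, \bar H_p\supk{\phi,0}$ hypothesized in Proposition~\ref{prop:diagram}, together with $\norm{\abs x^\theta \adj(x)}_1 < \infty$ for cases where the contracted $\tlamo$ forces one of the triangle sides to become a bare $\adj$. Summing the nine estimates then produces a total bounded by $C \cdot 3^{\phi+\theta}$ with a constant $C$ depending only on the hypothesized diagrams, as required.
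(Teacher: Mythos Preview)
Your overall strategy---split $\abs x^\phi$ along the top path and $\abs x^\theta$ along the bottom path via Lemma~\ref{lem:split_power} with $N=3$, producing nine decorated diagrams, then bound each in $L^p_x$ by Minkowski---is exactly what the paper does. However, the execution differs in one important respect.

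The paper does \emph{not} use the martini diagrams $\bar H_p\supphitheta$ or $\bar H_p\supk{\phi,0}$ at all for this particular diagram; the $L^p$ piece it extracts is always a two-edge bubble $\bar W_p\supphitheta$ or $\bar W_p\supk{\phi,0}$. The organizing dichotomy is not ``contracted vs.\ uncontracted $\tlamo$'' per se, but rather whether the edge opposite the $\phi$-edge is contracted. In Case~1 (not contracted), Minkowski's inequality peels off the $\phi$-edge together with its opposite edge as an $L^p$ bubble $\bar W_p$, leaving two (possibly $\theta$-decorated, possibly contracted) triangles $\bar T\supthetao$ or $\bar T\supzeroo$ on either side. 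In Case~2 (contracted, which only occurs when the $\phi$-edge is the top-middle edge and the $\tlamo$ below it collapses), the $\phi$-edge is bounded in $L^\infty$ by $\bar E\supphi$ and the remaining five-edge diagram splits via Young's inequality into two triangle-type pieces, one of which is handled by the $p$-triangle bound \eqref{eq:p-triangle}. The martini $\bar H_p$ is reserved in the paper for the more intricate $\psi\supk 2$ segment (Appendix, Case~2a), whose topology genuinely matches the martini; the $\PiOneOneOne$ diagram has a different ``two triangles joined by a square'' topology that does not reduce naturally to $\bar H_p$.

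Two minor points: your remark about the contracted $\tlamo$ forcing a bare $\adj$ is off---contraction of $\tlamo$ here simply identifies two vertices and introduces a $\lambda\inv$, no $\adj$ appears; and $\norm{\abs x^\theta\adj(x)}_1$ is not needed for this diagram (it enters only through $\psi_0\supk3$).
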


\begin{proof}
We use graphical notation. 
We use a thick edge $0 \weightedLine x$ to denote an $\abs x^a$-decorated edge ($a$-edge for short) $\tau_\lambda\supa(x) = \abs x^a \tau_\lambda(x)$, and we write $0 \weightedCircleLine x = \lambda\inv \delta(x) + 0 \weightedLine x$. 
We use Lemma~\ref{lem:split_power} to split $\abs x^\phi$ along the top side of a diagram, and to split $\abs x^\theta$ along the bottom side. 
As $\phi$ and $\theta$ have asymmetric roles, 
we distinguish then by using red colour for $\phi$-edge and blue colour for $\theta$-edge. This gives
\begin{multline} \label{eq:Pi1_pf0}
\abs x^\phitheta \times \PiOneOneOne \\
\le
3^\phitheta \(
\PiOneWeightedOne + \PiOneWeightedTwo + \PiOneWeightedThree
+ \text{(6 more diagrams)} \) ,
\end{multline}
where 3 of the 6 other diagrams have the $\theta$-edge at the bottom middle, and the other 3 have the $\theta$-edge at the bottom right. 
(Note there are $3^2 = 9$ diagrams in total.)
We bound the $L^p$ norm of each diagram, 
by separating into two cases according to whether the opposite edge of the $\phi$-edge is contracted. 

\smallskip \noindent 
\emph{Case 1: The opposite edge of the $\phi$-edge is not contracted.}
In this case, we use Minkowski's integral inequality to decompose the diagram into a left part, a right part, and an $L^p$ bubble that can be bounded by either $\bar W_p \supphitheta$ or $\bar W_p \supk {\phi,0}$. 
We then bound the left and right parts using triangles $\bar T\suptheta, \bar T\supk 0$ and the usual bubbles $\bar B_1\supk {\theta,0}, \bar B_1\supk {0,0}$.
For example, by Minkowski's inequality,
\begin{equation} 
\biggnorm{ \PiOneCaseOne }_{L^p_x}
\le \int_{\R^{2d}} \Triangleab  \biggnorm{ \PiOneCaseOneRight }_{L^p_x} \lambda^2 \dd a \dd b.
\end{equation}
And by translation invariance, 
\begin{equation}
\biggnorm{ \PiOneCaseOneRight }_{L^p_x}
= \biggnorm{ \PiOneCaseOneRightShift }_{L^p_x}
\le \int_{\R^{2d}} \biggnorm{ \WeightedBubble }_{L^p_x} \TriangleRightcd \lambda^2 \dd c \dd d.
\end{equation}
We thus have
\begin{equation}
\biggnorm{ \PiOneCaseOne }_{L^p_x}
\le \bar T \supk 0 \bar W_p \supphitheta \bar T \supk 0. 
\end{equation}
Another example is
\begin{align}
\biggnorm{ \PiOneCaseOneSecond }_{L^p_x}
&\le \int_{\R^{2d}} \PiOneCaseOneSecondLeft \biggnorm{ \PiOneCaseOneSecondRight }_{L^p_x} \lambda^2 \dd c \dd d \nl
&\le \bar W_p \supk{\phi,0} \int_{\R^{2d}} \Triangleab  \biggnorm{ \PiOneCaseOneSecondMid }_{L^\infty_{a,b}} \lambda^2 \dd a \dd b 
\le \bar T \supk 0 \bar T \supthetao \bar W_p \supk {\phi,0} ,
\end{align}
where $\bar T \supthetao$ was defined in \eqref{eq:contracted_triangle}

\smallskip \noindent 
\emph{Case 2: The opposite edge of the $\phi$-edge is contracted.}
In this case, we first bound the $\phi$-edge by its $L^\infty$ norm using $\bar E\supphi$, and then break the remaining diagram into a left and a right part using Young's convolution inequality, with $L^p$ norm on the left part and $L^1$ norm on the right part (this breakdown is quite arbitrary).
The left and right parts can then be bounded via usual methods, with the additional input of \eqref{eq:p-triangle} for the left.
For example, we have
\begin{align}
\biggnorm{ \PiOneCaseTwo }_{L^p_x}
&\le \lambda \biggnorm{ \sideTriangleBlue {}{x} }_{L^p_x} 
	\norm{ \weightedLineRed }_\infty 
	\biggnorm{ \sideTriangle{}{x} }_{L^1_x} \nl
&\le C (\bar T\suptheta + \bar E\suptheta \bar B_1\supk{0,0}) \bar E\supphi  \bar T \supk0 
\end{align}
for some absolute constant $C > 0$.

\smallskip \noindent 
\emph{Summary.}
Using the two cases, we can bound all diagrams in \eqref{eq:Pi1_pf0}, to get
\begin{align} \label{eq:Pi1_summary}
\biggnorm{ \abs x^\phitheta \times \PiOneOneOne }_{L^p_x}
\le 3^\phitheta \Big\{
	& 2\big( \bar W_p\supphitheta \bar T \supzeroo \bar T\supzero
	    + \bar W_p\supphizero \bar T \supthetao \bar T\supzero
	    + \bar W_p\supphizero \bar T \supzeroo \bar T\suptheta \big)  \nl
	& + \big( \bar T \suptheta \bar W_p \supphizero \bar T \supzero
	   + \bar T \supzero \bar W_p \supphitheta \bar T \supzero
	   + \bar T \supzero \bar W_p \supphizero \bar T \suptheta	\big) \nl
	& + 2 C (\bar T\suptheta + \bar E\suptheta \bar B_1\supk{0,0}) \bar E\supphi  \bar T \supk0 \Big\} .
\end{align}
This gives the desired result, since all appeared diagrams are finite under the hypotheses of Proposition~\ref{prop:diagram}.
\end{proof}

\subsection{The general bound for $n\ge 1$}
\label{subsec:general_n}

In addition to the $\psi$ functions introduced in Definition~\ref{def:psi}, we use $\bar \psi$ functions which explore the bound \eqref{eq:Pi_bound} ``from the right'' rather than ``from the left.'' 
The graphical notation for these functions is given in Figure~\ref{fig:EndBlockFunctions}.

\begin{definition}
Let $r,s,u,w \in \Rd$.
We define $\EndBlock_0 = \EndBlock^{(1)}_0+\EndBlock^{(2)}_0$, where
\begin{equation*}
\begin{aligned}
    \EndBlock^{(1)}_0(w,u;s) &= \tlam(w-s)\tlam(w-u)\tlam(u-s),\\
    \EndBlock^{(2)}_0(w,u;s) &= \lambda^{-2}\delta(w-s)\delta(u-s) .
\end{aligned}
\end{equation*}
We also define $\EndBlock = \EndBlock^{(1)}+\EndBlock^{(2)}+\EndBlock^{(3)} + \EndBlock^{(4)}$, where
\begin{align*}
    \EndBlock^{(1)}(w,u;r,s) &= \tlam(w-u)\int_{\R^{2d}} \tlam(t-s) \tlam(t-w)\tlam(u-z)\tlam(z-t)\tlamo(z-r) \lam^2 \dd z \dd t, \\
    \EndBlock^{(2)}(w,u;r,s) &= \tlam(w-s)\int_{\R^{2d}} \tlam(t-z)\tlam(z-u)\tlam(u-t)\tlamo(t-w)\tlamo(z-r) \lam^2 \dd z \dd t, \\
	\EndBlock^{(3)}(w,u;r,s) &= \tlam(u-w)\tlam(w-s)\tlam(u-r),\\
	\EndBlock^{(4)}(w,u;r,s) &= \tlam(u-w)\tlam(w-s) \lam\inv \delta(u-r).
\end{align*}

\begin{figure}
    \centering
    \begin{subfigure}[b]{0.45\textwidth}
    \centering
        \begin{tikzpicture}[scale=0.6]
        \draw (0,0) -- (-1,0.6) -- (-1,-0.6) -- cycle;
        \filldraw[fill=white] (0,0) circle (2pt) node[right]{$s$};
        \filldraw[fill=white] (-1,0.6) circle (2pt) node[above left]{$w$};
        \filldraw[fill=white] (-1,-0.6) circle (2pt) node[below left]{$u$};
        \end{tikzpicture}
    \caption{$\EndBlock^{(1)}_0$}
    \end{subfigure}
    \hfill
    \begin{subfigure}[b]{0.45\textwidth}
    \centering
        \begin{tikzpicture}[scale=0.6]
        \filldraw[fill=white] (0,0) circle (2pt) node[above]{$s=w=u$};
        \draw[white] (0,-1) circle (0pt);
       \end{tikzpicture}
    \caption{$\EndBlock^{(2)}_0$}
    \end{subfigure}
    \begin{subfigure}[b]{0.24\textwidth}
    \centering
        \begin{tikzpicture}[scale=0.6]
        \draw (0,0.6) -- (-1,0.6) -- (-1,-0.6) -- (0,-0.6);
        \filldraw[fill=white] (-0.5,-0.4) circle (2pt);
        \draw (-1,0.6) -- (-2,0.6) -- (-2,-0.6) -- (-1,-0.6);
        \draw (0,1.75) circle (0pt);
        \filldraw[fill=white] (0,0.6) circle (2pt) node[above right]{$s$};
        \filldraw[fill=white] (0,-0.6) circle (2pt) node[below right]{$r$};
        \filldraw (-1,0.6) circle (2pt);
        \filldraw (-1,-0.6) circle (2pt);
        \filldraw[fill=white] (-2,0.6) circle (2pt) node[above left]{$w$};
        \filldraw[fill=white] (-2,-0.6) circle (2pt) node[below left]{$u$};
        \end{tikzpicture}
    \caption{$\EndBlock^{(1)}$}
    \end{subfigure}
    \hfill
    \begin{subfigure}[b]{0.24\textwidth}
    \centering
        \begin{tikzpicture}[scale=0.6]
        \draw (0,0.6) -- (-1.5,0.6) -- (-1.5,-0.1) -- (-1,-0.6);
        \filldraw[fill=white] (-0.5,-0.4) circle (2pt);
        \filldraw[fill=white] (-1.3,0.25) circle (2pt);
        \draw (0,-0.6) -- (-1,-0.6) -- (-2,-0.6) -- (-1.5,-0.1);
        \filldraw[fill=white] (0,0.6) circle (2pt) node[above right]{$s$};
        \filldraw[fill=white] (0,-0.6) circle (2pt) node[below right]{$r$};
        \filldraw[fill=white] (-1.5,0.6) circle (2pt) node[above left]{$w$};
        \filldraw (-1,-0.6) circle (2pt);
        \filldraw (-1.5,-0.1) circle (2pt);
        \filldraw[fill=white] (-2,-0.6) circle (2pt) node[below left]{$u$};
        \end{tikzpicture}
    \caption{$\EndBlock^{(2)}$}
    \end{subfigure}
    \hfill
    \begin{subfigure}[b]{0.24\textwidth}
    \centering
        \begin{tikzpicture}[scale=0.6]
        \draw (0,0.6) -- (-1,0.6) -- (-1,-0.6) -- (0,-0.6);
        \filldraw[fill=white] (0,0.6) circle (2pt) node[above right]{$s$};
        \filldraw[fill=white] (0,-0.6) circle (2pt) node[below right]{$r$};
        \filldraw[fill=white] (-1,0.6) circle (2pt) node[above left]{$w$};
        \filldraw[fill=white] (-1,-0.6) circle (2pt) node[below left]{$u$};
        \end{tikzpicture}
    \caption{$\EndBlock^{(3)}$}
    \end{subfigure}
    \hfill
    \begin{subfigure}[b]{0.24\textwidth}
    \centering
        \begin{tikzpicture}[scale=0.6]
        \draw (0,0.6) -- (-1,0.6) -- (-1,-0.6);
        \filldraw[fill=white] (0,0.6) circle (2pt) node[above right]{$s$};
        \filldraw[fill=white] (-1,0.6) circle (2pt) node[above left]{$w$};
        \filldraw[fill=white] (-1,-0.6) circle (2pt) node[below]{$r=u$};
        \end{tikzpicture}
    \caption{$\EndBlock^{(4)}$}
    \end{subfigure}
    \hspace*{\fill}
    \caption{Diagrams of the $\EndBlock_0$, and $\EndBlock$ functions.}
    \label{fig:EndBlockFunctions}
\end{figure}
\end{definition}

It is convenient to use diagrams like 
\begin{equation}
    \LeftBlock{0}{a}{b}{m}
\end{equation}
to denote a composition of $\psi$ functions,
where each composition is a ``convolution'' across $\R^{2d}$.
For example, for any $m\ge1$, the diagram above is defined to be
\begin{equation}
\LeftBlock{0}{a}{b}{m} 
= \int_{\R^{m2d}} \Biggl(\prod^m_{j=1}\psi(w_j,u_j;w_{j-1},u_{j-1})\Biggr)
	\psi_0(w_0,u_0;0)\lambda^{2m}\dd \vec{w}\dd\vec{u},
\end{equation}
where $w_m=a$, $u_m=b$, $\vec{w}=(w_0,\ldots,w_{m-1})$, and $\vec{u}=(u_0,\ldots,u_{m-1})$. 
For $m=0$, we define this diagram to be $\psi_0(a,b;0)$. 
Similarly, for $m\geq 1$ we define
\begin{equation}
\RightBlockBar{a}{b}{c}{m} 
=  \int_{\R^{m2d}} \Biggl(\prod^m_{j=1}\bar\psi(w_j,u_j;w_{j-1},u_{j-1})\Biggr)
	\bar\psi_0(w_0,u_0;c)\lambda^{2m}\dd \vec{w}\dd\vec{u},
\end{equation}
where $w_m=a$, $u_m=b$, $\vec{w}=(w_0,\ldots,w_{m-1})$, and $\vec{u}=(u_0,\ldots,u_{m-1})$; while for $m=0$ we define this diagram to be $\bar\psi_0(a,b;c)$.

The following lemma demonstrates how we use the $\EndBlock$ functions.
When we break up the upper bound \eqref{eq:Pi_bound} for $\Lacelam^{(n)}(x)$, the ``right'' ($j\ge m$) part of the integral can be bounded using the $\EndBlock$ composition diagram above.

\begin{lemma}
Let $n \ge m \ge 1$, and let $x,w_{m-1},u_{m-1}\in\R^d$. Then
\begin{multline}
\int_{\R^{(n-m)2d}} \psi_n(x;w_{n-1},u_{n-1}) 	
	\Biggl( \prod_{j=m}^{n-1} \psi(w_j,u_j;w_{j-1},u_{j-1}) \Biggr)	
	\lambda^{2(n-m)} \dd \vec w_{[m,n)} \dd \vec u_{[m,n)}
 \\
\le  \int_{\R^{2d}} 
	\tau_\lam(w_{m-1}-a)\tau^\circ_\lam(u_{m-1}-b) 	
	\RightBlockBar{b}{a}{x}{n-m} 
	\lambda^2 \dd a \dd b ,
\end{multline}
where $\vec w_{[m,n)} = (w_m, \dots , w_{n-1} )$
and $\vec u_{[m,n)} = (u_m, \dots , u_{n-1} )$.
\end{lemma}

\begin{proof}[Proof sketch]
	This is mostly just a rearrangement of terms, as previously used in \cite[Proof of Proposition~4.10]{HHLM26}. The explicit $\tau_\lambda$ and $\tau^\circ_\lam$ in the bound are extracted first. Then each $\psi$ factor ``borrows'' a $\tau_\lambda$ and $\tau^\circ_\lam$ from one neighbour and ``lends'' them to the other, turning themselves into a $\EndBlock$ factor. The inequality arises solely because some $\tau_\lam$ terms are replaced with $\tau^\circ_\lam$ terms to get the $\EndBlock$ factors.
	
The following diagram illustrates an instance of how this rearrangement of terms can work. Here we look at the case where $m=n-1$, $\psi_n$ factor is a $\psi_n^{(1)}$ factor, and the single $\psi$ factor is a $\psi^{(1)}$ factor.
The rearrangement
\begin{align}
&\int_{\R^{2d}} \psi^{(1)}_n(x;w_{n-1},u_{n-1}) 	\psi^{(1)}(w_{n-1},u_{n-1};w_{n-2},u_{n-2}) \lambda^{2} \dd w_{n-1} \dd u_{n-1}\nonumber\\
 &\hspace{2cm}=
\int_{\R^{2d}}\Biggl(
\raisebox{-26pt}{
		\begin{tikzpicture}[scale=0.8]
			\draw (0,0.6) -- (1,0.6) -- (1,-0.6) -- (0,-0.6);
			\draw (1,0.6) -- (2,0.6) -- (2,-0.6) -- (1,-0.6);
			\draw (2,0.6) -- (3,0.6) -- (3,-0.6) -- (2,-0.6);
			\draw (3,0.6) -- (4,0) -- (3,-0.6);
			\filldraw[fill=white] (0.5,0.4) circle (2pt);
			%\draw (0.5,0.6) node[below]{$\circ$};
			\filldraw[fill=white] (2.5,-0.4) circle (2pt);
			%\draw (2.5,-0.6) node[above]{$\circ$};
			\filldraw[fill=white] (0,0.6) circle (2pt) node[left]{$u_{n-2}$};
			\filldraw[fill=white] (0,-0.6) circle (2pt) node[left]{$w_{n-2}$};
			\filldraw (1,0.6) circle (2pt);
			\filldraw (1,-0.6) circle (2pt);
			\filldraw[fill=white] (2,0.6) circle (2pt) node[above]{$w_{n-1}$};
			\filldraw[fill=white] (2,-0.6) circle (2pt) node[below]{$u_{n-1}$};
			\filldraw (3,0.6) circle (2pt);
			\filldraw (3,-0.6) circle (2pt);
			\filldraw[fill=white] (4,0) circle (2pt) node[right]{$x$};
		\end{tikzpicture}}\Biggl)
		\lambda^2 \dd w_{n-1}\dd u_{n-1}
		\nonumber\\ 
		&\hspace{2cm}= 
\int_{\R^{4d}}\Biggl(
\raisebox{-27pt}{
		\begin{tikzpicture}[scale=0.8]
			\draw (0,0.6) -- (1,0.6) -- (1,-0.6) -- (0,-0.6);
			\draw (1,0.6) -- (2,0.6) -- (2,-0.6) -- (1,-0.6);
			\draw (2,0.6) -- (3,0.6) -- (3,-0.6) -- (2,-0.6);
			\draw (3,0.6) -- (4,0) -- (3,-0.6);
			\filldraw[fill=white] (0.5,0.4) circle (2pt);
			%\draw (0.5,0.6) node[below]{$\circ$};
			\filldraw[fill=white] (2.5,-0.4) circle (2pt);
			%\draw (2.5,-0.6) node[above]{$\circ$};
			\filldraw[fill=white] (0,0.6) circle (2pt) node[left]{$u_{n-2}$};
			\filldraw[fill=white] (0,-0.6) circle (2pt) node[left]{$w_{n-2}$};
			\filldraw[fill=white] (1,0.6) circle (2pt) node[above]{$b_{n-2}$};
			\filldraw[fill=white] (1,-0.6) circle (2pt) node[below]{$a_{n-2}$};
			\filldraw (2,0.6) circle (2pt);
			\filldraw (2,-0.6) circle (2pt);
			\filldraw[fill=white] (3,0.6) circle (2pt) node[above]{$a_{n-1}$};
			\filldraw[fill=white] (3,-0.6) circle (2pt) node[below]{$b_{n-1}$};
			\filldraw[fill=white] (4,0) circle (2pt) node[right]{$x$};
		\end{tikzpicture}}\Biggl)
		\lambda^4 \dd a_{n-1}\dd a_{n-2}\dd b_{n-1}\dd b_{n-2}\nonumber\\
		&\hspace{2cm} = \int_{\R^{4d}}\tau_\lam(w_{m-1}-a)\tau^\circ_\lam(u_{m-1}-b) \EndBlock^{(1)}\left(b_{n-2},a_{n-2};b_{n-1},a_{n-1}\right)\nonumber\\
		&\hspace{7cm}\times\EndBlock^{(1)}_0(b_{n-1},a_{n-1};x) \lambda^4 \dd a_{n-1}\dd a_{n-2}\dd b_{n-1}\dd a_{n-2},
\end{align}
produces $\EndBlock^{(1)}$ and $\EndBlock_0^{(1)}$ factors, as desired.
	
	To see an instance that is an inequality rather than actually an equality, consider $m=n$ with the $\psi_n$ factor being a $\psi_n^{(2)}$ factor. In this case, we have
	\begin{align}
		\psi^{(2)}_n\left(x;w_{n-1},u_{n-1}\right) &= 
		\raisebox{-19pt}{
		\begin{tikzpicture}[scale=0.8]
			\draw (0,0.6) -- (1,0) -- (0,-0.6);
			\filldraw[fill=white] (0,0.6) circle (2pt) node[left]{$w_{n-1}$};
			\filldraw[fill=white] (0,-0.6) circle (2pt) node[left]{$u_{n-1}$};
			\filldraw[fill=white] (1,0) circle (2pt) node[right]{$x$};
		\end{tikzpicture}}
		\leq 		
		\raisebox{-19pt}{
		\begin{tikzpicture}[scale=0.8]
			\draw (0,0.6) -- (1,0) -- (0,-0.6);
			\filldraw[fill=white] (0.45,-0.15) circle (2pt);
			%\draw (0.65,-0.5) node{$\circ$};
			\filldraw[fill=white] (0,0.6) circle (2pt) node[left]{$w_{n-1}$};
			\filldraw[fill=white] (0,-0.6) circle (2pt) node[left]{$u_{n-1}$};
			\filldraw[fill=white] (1,0) circle (2pt) node[right]{$x$};
		\end{tikzpicture}}\nonumber\\
		 &= \int_{\R^{2d}}\Biggl(
		 \raisebox{-20pt}{
		\begin{tikzpicture}[scale=0.8]
			\draw (0,0.6) -- (1,0.6);
			\draw (1,-0.6) -- (0,-0.6);
			\filldraw[fill=white] (0.5,-0.4) circle (2pt);
			%\draw (0.5,-0.6) node[above]{$\circ$};
			\filldraw[fill=white] (0,0.6) circle (2pt) node[left]{$w_{n-1}$};
			\filldraw[fill=white] (0,-0.6) circle (2pt) node[left]{$u_{n-1}$};
			\filldraw[fill=white] (1,0.6) circle (2pt) node[right]{$a$};
			\filldraw[fill=white] (1,-0.6) circle (2pt) node[right]{$b$};
			%\filldraw[fill=white] (2,0) circle (2pt) node[right]{$x$};
		\end{tikzpicture}}\Biggl)\lambda^{-2}\delta(x-a)\delta(x-b)\lambda^2\dd a\dd b\nonumber\\
		& = \int_{\R^{2d}} \tau_\lam(w_{n-1}-a)\tau^\circ_\lam(u_{n-1}-b)  \EndBlock_0^{(2)}(b,a;x)
\lambda^{2} \dd a \dd b,
	\end{align}
	as desired.
\begin{comment}	
	\chYL{(Should we write $\chYL{ \EndBlock_0^{(2)}(z,t;x) }$ or $\chYL{ \EndBlock_0^{(2)}(t,z;x) }$ for it to be consistent with other diagrams?)}
	\chMD{These are equivalent here because of the symmetry of the diagrams, but I think this is a consistent (clockwise vs anticlockwise) labelling}
\end{comment}
\end{proof}

The following lemma bounds the composition diagrams.
Recall the definition of $U\crit, V\crit$ from \eqref{eq:def_UV}.

\begin{lemma} \label{lem:left_right}
For any integer $m \ge 0$ and any $\lambda \in [0, \lam_c)$,
we have
\begin{align}
\label{eq:left}
\int_{\R^{2d}} \LeftBlock{\orig}{a}{b}{m} \lambda^2 \dd a\dd b 
&\leq 3 U\crit V\crit^{m}, 
\\
\label{eq:right}
\int_{\R^{2d}} \RightBlockBar{a}{b}{\orig}{m} \lambda^2 \dd a\dd b 
&\leq 2 U\crit V\crit^{m} .
\end{align}
\end{lemma}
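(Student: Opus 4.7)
\medskip
\noindent\textbf{Proof proposal.} I would prove both bounds together by induction on $m$, since the $\bar\psi$ family is structurally the mirror of the $\psi$ family (the only difference between $\psi^{(i)}$ and $\bar\psi^{(i)}$ is which of the two ``internal'' legs carries the contractible edge $\tau_\lambda^\circ$, and the difference between $\psi_0$ and $\bar\psi_0$ is the replacement of the adjacency tail $\psi_0^{(3)}$ by the fully contracted piece $\bar\psi_0^{(2)}$). The bounds $3$ and $2$ in \eqref{eq:left}--\eqref{eq:right} reflect precisely that $\psi_0$ has three summands while $\bar\psi_0$ has two; this already suggests that the induction should be set up so that the $m=0$ base case supplies the constants $3$ and $2$ and each additional block then contributes a clean factor of $V\crit$.

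For the base case $m=0$, I would directly integrate each $\psi_0^{(i)}$ and $\bar\psi_0^{(i)}$. The term $\psi_0^{(1)}$ integrates to a single triangle, bounded by $\triangle$; the term $\psi_0^{(2)}$ contains a $\delta$ that collapses one vertex, leaving an integrated product of a two-step convolution and a one-step convolution, bounded by $\triangle^{\circ\circ}$; and $\psi_0^{(3)}$ contributes $\lambda_c\|\connf\|_1$. Summing yields $\int\psi_0\,\lambda^2\leq \triangle+\triangle^{\circ\circ}+\lambda_c\|\connf\|_1 \leq 3U\crit$ after using the definition $U\crit=\triangle\triangle^{\circ\circ}+(\triangle^{\circ\circ})^2+\lambda_c\|\connf\|_1$ and the trivial bounds $\triangle\leq\triangle\triangle^{\circ\circ}$ (valid when $\triangle^{\circ\circ}\geq 1$, which it is since $1$ is one of its summands) and $\triangle^{\circ\circ}\leq(\triangle^{\circ\circ})^2$. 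The $\bar\psi_0$ calculation is the same but omits the $\psi_0^{(3)}$ term and gains the $\bar\psi_0^{(2)}$ term, which integrates to $1\leq U\crit$, giving $\leq 2U\crit$.

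For the inductive step, I would peel off the outermost block $\psi(w_m,u_m;w_{m-1},u_{m-1})$ (respectively $\bar\psi$) and show that
\begin{equation*}
\iint_{\R^{2d}} \psi(w_m,u_m;w_{m-1},u_{m-1})\,F(w_{m-1},u_{m-1})\,\lambda^2\dd w_{m-1}\dd u_{m-1} \;\leq\; V\crit \cdot \sup_{a,b}F(a,b),
\end{equation*}
after which iteration together with the base case delivers \eqref{eq:left}--\eqref{eq:right}. The factor $V\crit=4(\triangle\triangle^{\circ\circ}U\crit)^{1/2}$ strongly hints at Cauchy--Schwarz: for each of $\psi^{(1)},\ldots,\psi^{(4)}$ I would split the interior of the block along a cut that separates the two outgoing legs $(w_m,u_m)$ from the two incoming legs $(w_{m-1},u_{m-1})$, applying the Cauchy--Schwarz inequality across the cut so that the resulting two factors are, respectively, a (possibly circle-decorated) triangle-type integral controlled by $\triangle^{1/2}$ or $(\triangle^{\circ\circ})^{1/2}$ and a bubble-type integral that, when composed with the remaining blocks, contributes $U\crit^{1/2}$. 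Summing the four contributions from $\psi^{(1)}$--$\psi^{(4)}$ then gives the factor $V\crit$, and the $L^\infty$ norm of $F$ is preserved by the induction because the composition operation is a convolution in the $(w,u)$ pair variables.

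The main obstacle will be the bookkeeping in the inductive step: each $\psi^{(i)}$ has a different internal geometry (two pentagon-like diagrams $\psi^{(1)},\psi^{(2)}$ and two reduced diagrams $\psi^{(3)},\psi^{(4)}$), and the appropriate Cauchy--Schwarz cut must be chosen in each case to produce exactly a $\triangle^{1/2}$ factor paired with a $(\triangle^{\circ\circ})^{1/2}$ factor, while leaving the remaining legs compatible with composition against the next block. The factor $4$ in the definition of $V\crit$ is there precisely to absorb the four summands in $\psi$; one should check that the contribution from each $\psi^{(i)}$ is at most $(\triangle\triangle^{\circ\circ})^{1/2}U\crit^{1/2}$ times the sup of $F$ so that the sum is at most $V\crit\sup F$. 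Since the same decomposition applies to $\bar\psi$ (with the roles of the two legs interchanged but the same diagrammatic quantities), both bounds in the lemma follow simultaneously.
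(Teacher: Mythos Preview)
The paper does not give its own argument here; it simply cites \cite[Lemma~5.7]{DH22}. So there is no detailed proof in the paper to compare against, and the question is whether your outline would actually work.

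Your base case is fine: each summand of $\psi_0$ (resp.\ $\bar\psi_0$) integrates to something dominated by $U\crit$, and the counts $3$ and $2$ are exactly right.

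The inductive step, however, has a genuine gap in the direction of integration. You propose to show that
\[
\iint_{\R^{2d}} \psi(w_m,u_m;w_{m-1},u_{m-1})\,F(w_{m-1},u_{m-1})\,\lambda^2\,\dd w_{m-1}\dd u_{m-1}
\le V\crit \sup F,
\]
uniformly in the \emph{outer} pair $(w_m,u_m)$. Iterating this yields a bound on $\sup_{a,b}[\psi^m\psi_0](a,b;0)$, not on the integral $\iint[\psi^m\psi_0](a,b;0)\,\lambda^2\dd a\dd b$ required by \eqref{eq:left}. Worse, the supremum you would be inducting on is infinite already at $m=0$: both $\psi_0^{(2)}$ and $\psi_0^{(3)}$ contain the factor $\lambda^{-1}\delta(w-s)$, so $\sup_{w,u}\psi_0(w,u;0)=\infty$, and similarly $\psi^{(4)}$ carries a delta that persists along the chain. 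The appeal to ``convolution in the $(w,u)$ pair variables'' does not rescue this, since the base block is anchored at $0$ and is not translation-invariant in $(w,u)$.

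The fix is to reverse the roles: one should bound
\[
\sup_{c,d}\iint_{\R^{2d}}\psi(a,b;c,d)\,\lambda^2\,\dd a\,\dd b \le V\crit,
\]
i.e.\ integrate over the \emph{outer} pair and take the supremum over the \emph{inner} pair. This iterates cleanly on the integrated quantity $\iint G_m\,\lambda^2$ against the base case $\iint\psi_0\,\lambda^2\le 3U\crit$. Your Cauchy--Schwarz intuition for producing the square-root structure of $V\crit$ is right, but be aware that the factor $U\crit^{1/2}$ inside $V\crit$ indicates that in the argument of \cite{DH22} the Cauchy--Schwarz couples a $\psi$-block with an adjacent block (effectively borrowing half of a $\psi_0$-type contribution), rather than acting entirely within a single $\psi^{(i)}$ as you suggest. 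The bookkeeping you anticipate is real, but it has to be done with the corrected orientation.
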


\begin{proof}
These bounds follow from \cite[Lemma~5.7]{DH22}. Note that the definition of $V\crit$ used here differs from the $V\crit$ used there in that we have included the factor of $4$ in $V\crit$.
\end{proof}

The proof of Proposition~\ref{prop:Pi_n} for $n\ge 1$ follows the same strategy as the proof of Proposition~\ref{prop:Pi1}. 
Briefly, we split $\abs x^\phitheta$ across the two sides of the diagram for $\Pi_\lam\supn(x)$, and we bound the $L^p_x$ norm of the resultant diagrams by extracting an $L^p$ piece in the middle, and by handling the left and right parts using Lemma~\ref{lem:left_right}. 
We prove the middle $L^p$ piece is finite (in Appendix~\ref{sec:DiagramBounds}), and we get powers of $V\crit$ from the left and right parts.

\begin{proof}[Proof of Proposition~\ref{prop:Pi_n} for $n\ge 1$]
We use the diagrammatic estimate \eqref{eq:Pi_bound},
and we use Lemma~\ref{lem:split_power} to split $\abs x^\phi$ (the red $\phi$ decoration) across the ``top'' of the diagram (from the perspective of the orientation of the $\psi_0$ diagrams) and split $\abs x^\theta$ (the blue $\theta$ decoration) across the other side of the diagram.
Since the side length of the diagram for $\Pi_\lam \supn$ is at most $2n+1$, 
the splitting creates at most $(2n+1)^2$ diagrams (from the sum in \eqref{eq:power_split}), each of which multiplied by a factor less than $(2n+1)^\phitheta$. 
We bound each of these diagrams separately.

In the diagrammatic notation below, when red ($\phi$) and blue ($\theta$) decorations lie on the edge of grey shaded composition diagrams, this indicates that a decoration lies on \emph{some edge} of the diagram (and it is not a decoration bridging the distance between the two endpoints).

Ignoring the $\theta$ decoration for a moment (i.e., for $\theta=0$), 
there are three qualitatively different possibilities for the location of the $\phi$ decoration: 
\begin{enumerate}
\item \label{case:zerothSegment} 
The $\phi$ decoration lies in the $\psi_0$ segment;

\item \label{case:middleSegment} 
The $\phi$ decoration lies in the $j$-th $\psi$ segment, where $j \in \{1, \dots, n-1\}$;

\item \label{case:endSegment} 
The $\phi$ decoration lies in the $\psi_n$ segment.
\end{enumerate}
We treat the three cases differently.
In case \ref{case:zerothSegment}, we bound (using Minkowski's integral inequality)
\begin{multline} \label{eqn:CaseOneSplit}
\norm*{\PiNCaseOnePhi{0}{x}}_{L^p_x}= \norm*{\PiNCaseOnePhi{x}{0}}_{L^p_x}\\
    \leq \int_{\R^{2d}} \Biggnorm{\PiNCaseOnePhiPtOne{x}{a}{b}}_{L^p_x}
    \RightBlockBar{a}{b}{\orig}{n-1} 
    \lambda^2\dd a\dd b.
\end{multline}
(The notation here means that the $\phi$ decoration is on some edge in the $\psi_0$ part of the diagram.)
In case \ref{case:middleSegment}, we bound
\begin{align}
    &\norm*{\PiNCaseTwoPhi}_{L^p_x} \nonumber\\
    &\leq \int_{\R^{2d}} \LeftBlock{0}{a}{b}{j-1}
    \Biggnorm{\PiNCaseTwoPhiPtTwo{a}{b}{x}}_{L^p_x}\lambda^2\dd a \dd b\nonumber\\
    &\leq \int_{\R^{4d}} \LeftBlock{0}{a}{b}{j-1}\Biggnorm{\PiNCaseTwoPhiPtThree{a-x}{b-x}{c}{d}}_{L^p_x}
    \PiNCaseTwoPhiPtFour{c}{d}{0}
    \lambda^4\dd a \dd b \dd c \dd d. \label{eqn:CaseTwoSplit}
\end{align}
Note that, depending on the parity of $j$, the $\phi$ decoration and the $\circ$ edge each can appear on the opposite side of the diagram. 
In case \ref{case:endSegment}, we bound
\begin{equation} 	\label{eqn:CaseThreeSplit}
    \norm*{\PiNCaseThreePhi}_{L^p_x}  
    \leq \int_{\R^{2d}} 
    \LeftBlock 0 a b {n-1}
    \Biggnorm{\PiNCaseThreePhiPartTwo{a}{b}{x}}_{L^p_x}
    \lambda^2\dd a\dd b.
\end{equation}
For each case, we have extracted a segment of the diagram with the $L^p_x$ norm, which we call \emph{special}. (In cases~\ref{case:zerothSegment} and~\ref{case:middleSegment}, the special segment does not coincide with a $\psi_0$ or $\psi$ segment.)

When there is a $\theta$ decoration, we further split into subcases according to the relative position of the $\theta$ decoration and the special segment:
\begin{enumerate}[label=\alph*)]
    \item \label{case:sameTheta} The $\theta$ decoration lies in the special $L^p_x$ segment;
    \item \label{case:earlierTheta} The $\theta$ decoration lies in a $\psi$  or $\psi_0$ segment to the left of the special $L^p_x$ segment;
    \item \label{case:laterTheta} The $\theta$ decoration lies in a $\bar \psi$  or $\bar \psi_0$ segment to the right of the special $L^p_x$ segment.
\end{enumerate}
Note that cases \ref{case:zerothSegment}\ref{case:earlierTheta} and \ref{case:endSegment}\ref{case:laterTheta} are necessarily null.
For the \ref{case:sameTheta} family of cases, 
the only difference to the bounds \eqref{eqn:CaseOneSplit}--\eqref{eqn:CaseThreeSplit} is the $L^p_x$ segment, and we will prove in Appendix~\ref{sec:DiagramBounds}, by expanding out the $\psi$ functions using their definition, that there is a constant $C\spec < \infty$ (depending only on the diagrams in the hypothesis) such that
\begin{multline} \label{eq:special_bound}
	 \max\Biggl\{\Biggnorm{\PiNCaseOnePhiPtOneDouble{x}{a}{b}}_{L^p_x}, \Biggnorm{\PiNCaseTwoPhiPtThreeDouble{a-x}{b-x}{c}{d}}_{L^p_x},\Biggnorm{\PiNCaseTwoPhiPtThreeDoubleVTwo{a-x}{b-x}{c}{d}}_{L^p_x}, \\ 
		\Biggnorm{\PiNCaseThreePhiPartTwoDouble{a}{b}{x}}_{L^p_x},\Biggnorm{\PiNCaseThreePhiPartTwoDoubleVTwo{a}{b}{x}}_{L^p_x}\Biggr\}\leq C\spec 
\end{multline}
uniformly in $a,b,c,d\in\Rd$ and in $\lambda\in [ \half\lambda_c, \lam_c)$.
Using this bound for the special segment, and using Lemma~\ref{lem:left_right} to bound the remaining integrals in \eqref{eqn:CaseOneSplit}--\eqref{eqn:CaseThreeSplit},
all diagrams in the \ref{case:sameTheta} family can be bounded by
\begin{equation}
C\spec \times
\begin{cases}
2U\crit V\crit^{n-1}		 & (\text{case 1a)}) \\
6U\crit^2 V\crit^{n-2}	& (\text{case 2a)}) \\
3U\crit V\crit^{n-1}		& (\text{case 3a)}) .
\end{cases}
\end{equation}
(Note that case \ref{case:middleSegment} only exists if $n\geq 2$.)

Now we consider the \ref{case:earlierTheta} family of cases. 
We still use \eqref{eq:right} in Lemma~\ref{lem:left_right} to bound the right $\bar \psi$ segment. 
We bound the special $L^p_x$ segment by setting $\theta=0$ in the bounds of Appendix~\ref{sec:DiagramBounds}; this gives another constant $C\spec'$ which bounds the same diagrams in \eqref{eq:special_bound} with no $\theta$ decorations. 
For the left $\psi$ segment that now contains the $\theta$ decoration, we only need to make a minor adjustment to the proof of \eqref{eq:left} of Lemma~\ref{lem:left_right}. 
Since there is only one $\theta$ decoration on some edge, 
either one single occurrence of a triangle becomes a $\theta$-decorated triangle ($\bar T^{(0)} \mapsto \bar T^{(\theta)}$), or a single occurrence of a wedge becomes a $\theta$-decorated wedge ($\bar B^{(0,0)}_1 \mapsto \bar B^{(\theta,0)}_1$), or an adjacency becomes a $\theta$-decorated adjacency ($\norm*{\connf}_1\mapsto \norm{ \abs x^\theta \adj(x) }_1$ in $U\crit$). In the worst-case scenario, a single small $V\crit^2$ factor is replaced by an $O(1)$ factor. 
Therefore, there is a constant $C\zuo < \infty$ such that for all integer $m \ge 0$, 
\begin{equation}
\int_{\R^{2d}} \PiNCaseTwoGeneralPtOneBlueM{0}{a}{b}{m} \lambda^2\dd a \dd b 
\le C\zuo  V\crit ^{(m-2) \vee 0} ,
\end{equation}
and all diagrams in the \ref{case:earlierTheta} family can be bounded by
\begin{equation}
C\zuo C\spec' \times
\begin{cases}
2U\crit V\crit^{(n-4) \vee 0}	& (\text{case 2b)}) \\
V\crit^{(n-3)\vee 0}			& (\text{case 3b)}) .
\end{cases}
\end{equation}

The \ref{case:laterTheta} family of cases is very similar. 
We use \eqref{eq:left} in Lemma~\ref{lem:left_right} to bound the left $\psi$ segment, and we bound the right decorated segment by
\begin{equation}
\int_{\R^{2d}} \PiNCaseOneGeneralPtTwoBlueM{a}{b}{0}{m} \lambda^2\dd a\dd b 
\leq C\you V\crit^{(m-2) \vee 0} 
\end{equation}
for all $m\ge 0$. All diagrams in the \ref{case:laterTheta} family can then be bounded by
\begin{equation}
C\spec' C\you \times
\begin{cases}
V\crit^{(n-3)\vee0}		 	& (\text{case 1c)}) \\
3U\crit V\crit^{(n-4)\vee 0}	& (\text{case 2c)}) .
\end{cases}
\end{equation}

Summing over all $(2n+1)^2$ diagrams and including the $(2n+1)^\phitheta$ factor then give the desired \eqref{prop:Pi_n}, since each case contains a power of $V\crit$ that is smaller than $V\crit^{(n-4) \vee 0}$.
\end{proof}

\appendix

\section{Bounds for the $L^p$ segment}
\label{sec:DiagramBounds}

During the proof of Proposition~\ref{prop:Pi_n} (equation  \eqref{eq:special_bound}),
we needed to use that the $L^p$ norm of certain diagrams, which we called the special segment, can be bounded uniformly in $\lambda \in [\half \lam_c, \lam_c)$. 
Here we present the bounds for each of them.

\subsection{Case~1a)}
For this case, we need to bound
\begin{equation}
	\Biggnorm{\PiNCaseOnePhiPtOneDouble{x}{a}{b}}_{L^p_x} ,
\end{equation}
where $\psi_0 = \psi_0 \supk 1 + \psi_0 \supk 2 + \psi_0 \supk 3$.
This diagram does not appear for $\psi_0 \supk 2$ and $\psi_0 \supk 3$, since for them the vertices $x$ and $a$ are adjacent.
We therefore only need to consider $\psi_0 \supk1$,
and we have the following two cases:
\begin{align}
\norm*{\PiNCaseOneJOneStart{x}{a}{b}}_{L^p_x} &\leq \bar W_p^{(\phi,\theta)}(\bar T^{(0)}+\bar B^{(0,0)}_1) ,
\\
\norm*{\PiNCaseTwoGeneralPartOne{x}{a}{b}}_{L^p_x}&\leq \bar W^{(\phi,0)}_p \bar T^{(\theta)} .
\end{align}

\subsection{Case~3a)}
For this case, we need to bound
\begin{equation}
\Biggnorm{\PiNCaseThreePhiPartTwoDouble{a}{b}{x}}_{L^p_x}
\quad\text{and}\quad
\Biggnorm{\PiNCaseThreePhiPartTwoDoubleVTwo{a}{b}{x}}_{L^p_x},
\end{equation}
where $\psi_n = \psi_n^{(1)} + \psi_n^{(2)}$.
For $\psi_n \supk 2$, both give the same diagram which is bounded by $\bar W_p \supphitheta$.
For $\psi_n \supk 1$, 
when the opposite edge of the $\phi$-edge is not contracted, we have
\begin{align}
\norm*{\PiNCaseThreeJOnePtFive{a}{b}{x}}_{L^p_x} ,
\norm*{\PiNCaseThreeJOnePtEight{a}{b}{x}}_{L^p_x} ,
\norm*{\PiNCaseThreeJOnePtEightFlipped{a}{b}{x}}_{L^p_x}
& \leq \bar W^{(\phi,\theta)}_p \bar T^{(0), \circ} ,
\\
\norm*{\PiNCaseThreeJOnePtSeven{a}{b}{x}}_{L^p_x}, \norm*{\PiNCaseThreeJOnePtSix{a}{b}{x}}_{L^p_x} ,
\norm*{\PiNCaseThreeJOnePtSixFlipped{a}{b}{x}}_{L^p_x} 
& \leq \bar W^{(\phi,0)}_p  \bar T^{(\theta), \circ} .
\end{align}
When the opposite edge of the $\phi$-edge is contracted, we have
\begin{align}
\norm*{\PiNCaseThreeJOnePtThreeSwap{a}{b}{x}}_{L^p_x}
&\leq \bar E \supphi \biggnorm{ \sideTriangleBlue a x}_{L^p_x} 
\lesssim \bar E \supphi  	(\bar T\suptheta + \bar E\suptheta \bar B_1\supk{0,0}) ,
\end{align}
by \eqref{eq:p-triangle}.

\subsection{Case~2a)}

For this case, we need to bound
\begin{equation} \label{eq:goal_2a}
\Biggnorm{\PiNCaseTwoPhiPtThreeDouble{a-x}{b-x}{c}{d}}_{L^p_x} 
\quad \text{and}\quad
\Biggnorm{\PiNCaseTwoPhiPtThreeDoubleVTwo{a-x}{b-x}{c}{d}}_{L^p_x} ,
\end{equation}
where $\psi = \psi \supk 1 + \psi \supk 2 + \psi \supk 3 + \psi \supk 4$.

We start from $\psi \supk 4$, for which only the second diagram of \eqref{eq:goal_2a} appears. We have
\begin{align}
\norm*{\PiNDiagramTwentySix{a-x}{b-x}{c}{d}}_{L^p_x} 
& \stackrel{ \eqref{eq:p-triangle} }{\lesssim}
\bar E \supphi  (\bar T\suptheta + \bar E\suptheta \bar B_1\supk{0,0}) ,
\\
\norm*{\PiNDiagramTwentySeven{a-x}{b-x}{c}{d}}_{L^p_x} 
& \leq (1) \bar W^{(\phi,\theta)}_p ,
\end{align}
where we used $\tlam(a-x-d) \le 1$ in the last line.

For $\psi \supk 3$, we have
\begin{align}
\norm*{\PiNDiagramTwentyOne{a-x}{b-x}{c}{d}}_{L^p_x} ,
\norm*{\PiNDiagramTwentyOneFlipped{a-x}{b-x}{c}{d}}_{L^p_x} 
& \leq \bar W^{(\phi,\theta)}_p \bar T^{(0), \circ} , 
\\
\norm*{\PiNDiagramTwentyTwo{a-x}{b-x}{c}{d}}_{L^p_x} ,
\norm*{\PiNDiagramTwentyTwoFlipped{a-x}{b-x}{c}{d}}_{L^p_x} 
& \leq \bar W^{(\phi,0)}_p \bar T^{(\theta), \circ} .
\end{align}

For $\psi \supk 1$, 
when the opposite edge of the $\phi$-edge is not contracted, we have
\begin{align}
\norm*{\PiNCaseTwoJOnePtFiveVTwo{a-x}{b-x}{c}{d}}_{L^p_x} ,
\norm*{\PiNCaseTwoJOnePtEightVTwo{a-x}{b-x}{c}{d}}_{L^p_x} 
& \leq \bar W^{(\phi,\theta)}_p \bar T^{(0), \circ}\bar T^{(0), \circ} ,
\\
\norm*{\PiNCaseTwoJOnePtSevenVTwo{a-x}{b-x}{c}{d}}_{L^p_x},
\norm*{\PiNCaseTwoJOnePtSixVTwo{a-x}{b-x}{c}{d}}_{L^p_x} 
& \leq \bar W^{(\phi,0)}_p \bar T^{(\theta), \circ} \bar T^{(0), \circ} ,
\\
\norm*{\PiNCaseTwoMidPartThree{a-x}{b-x}{c}{d}}_{L^p_x}, \norm*{\PiNCaseTwoMidPartFour{a-x}{b-x}{c}{d}}_{L^p_x}
& \leq \bar W^{(\phi,0)}_p \bar T^{(\theta)} \bar T^{(0), \circ} ,
\end{align}
with similar upper bounds when the contracted edges lie on the opposite side of the diagram (for the second diagram of \eqref{eq:goal_2a}).
When the opposite edge of the $\phi$-edge is contracted, which only happens for the second diagram of \eqref{eq:goal_2a}, we have
\begin{align}
\norm*{\PiNCaseTwoJOnePtThreeVTwoSwapL{a-x}{b-x}{c}{d}}_{L^p_x}
& \stackrel{ \eqref{eq:p-triangle} }{\lesssim}
\bar E \supphi  (\bar T\suptheta + \bar E\suptheta \bar B_1\supk{0,0}) \bar T ^{(0), \circ} ,
\\
\norm*{\PiNCaseTwoJOnePtThreeVTwoSwapR{a-x}{b-x}{c}{d}}_{L^p_x}
& \stackrel{ \eqref{eq:p-triangle} }{\lesssim}
\bar E \supphi  (\bar T\supzero + \bar E\supzero \bar B_1\supk{0,0}) \bar T ^{(\theta), \circ} .
\end{align}

Finally, for $\psi \supk 2$, we start from the first diagram of \eqref{eq:goal_2a}. 
When the vertical edge in the middle of the diagram is contracted, we have
\begin{align}
\label{eq:red_top_left1}
\norm*{\PiNCaseTwoJTwoPtNineVOne{a-x}{b-x}{c}{d}}_{L^p_x} 
& \leq \bar W^{(\phi,\theta)}_p \bar B^{(0,0)}_1 \bar T^{(0),\circ} ,
\\
\norm*{\PiNCaseTwoJTwoPtTenVOne{a-x}{b-x}{c}{d}}_{L^p_x} 
& \leq \bar W^{(\phi,0)}_p \bar B^{(\theta,0)}_1 \bar T^{(0),\circ} ,
\\
\norm*{\PiNDiagramThree{a-x}{b-x}{c}{d}}_{L^p_x} 
& \leq \bar W^{(\phi,0)}_p \bar B_1^{(0,0)} \bar T^{(\theta)} ,
\end{align}
and we note that $\bar B^{(\theta,0)}_1 \lesssim \bar E \suptheta + \bar T \suptheta < \infty$ by Lemma~\ref{lem:extra}.
When the vertical edge is not contracted, we have
\begin{align}
\label{eq:red_top_left2}
\norm*{\PiNCaseTwoJTwoPtElevenVOne{a-x}{b-x}{c}{d}}_{L^p_x} 
& \leq \bar W^{(\phi,\theta)}_p \bar T^{(0)}\bar T^{(0), \circ} ,
\\
\norm*{\PiNCaseTwoJTwoPtTwelveVOne{a-x}{b-x}{c}{d}}_{L^p_x} ,
\norm*{\PiNDiagramFour{a-x}{b-x}{c}{d}}_{L^p_x} 
& \leq \bar W_p \supk{\phi,0} \bar T\supthetao \bar T^{(0), \circ} .
\end{align}

For the second diagram of \eqref{eq:goal_2a}, 
we first consider the case where the $\phi$-edge is at the bottom-left of the diagram.
In this case, if the top-left edge is not contracted, we use
\begin{align}
\norm*{\PiNDiagramFifteen{a-x}{b-x}{c}{d}}_{L^p_x} 
& \leq \bar W^{(\phi,0)}_p \bar B_1^{(0,0)} \bar T^{(\theta), \circ} ,
\\
\norm*{\PiNDiagramNineteen{a-x}{b-x}{c}{d}}_{L^p_x}
& \leq \bar W^{(\phi,0)}_p \bar T^{(0)} \bar T^{(\theta), \circ} ,
\end{align}
and the bounds \eqref{eq:red_top_left1}, \eqref{eq:red_top_left2} with the $\phi$ and $\theta$ decorations switched.
If the top-left edge is contracted, we have
\begin{align}
\norm*{\PiNDiagramFive{a-x}{b-x}{c}{d}}_{L^p_x} 
& \stackrel{ \eqref{eq:p-triangle} }{\lesssim}
	\bar E^{(\phi)} (1) 
	(\bar T\suptheta + \bar E\suptheta \bar B_1\supk{0,0}) ,
\\
\norm*{\PiNDiagramSeven{a-x}{b-x}{c}{d}}_{L^p_x} 
& \stackrel{ \eqref{eq:p-triangle} }{\lesssim}
	\bar E \supphi \bar B_1 \supk{0,0} 
	 (\bar T\suptheta + \bar E\suptheta \bar B_1\supk{0,0}) ,
\\
\norm*{\PiNDiagramEleven{a-x}{b-x}{c}{d}}_{L^p_x} 
& \leq \bar E^{(\phi)} 
	(  \lam\inv \bar T \supthetao \bar T \supzero
	+ \bar E \suptheta \bar B_1 \supk{0,0} \bar T^{(0),\circ} ) ,
\end{align}
where we bounded the $L^p_x$ norm by the sum of $L^1_x$ and $L^\infty_x$ norms in the last line.
For the other case where the $\phi$-edge is at the bottom-middle of the diagram, 
if the vertical edge in the middle of the diagram is contracted, using Young's convolution inequality we have
\begin{align}
\norm*{\PiNCaseTwoJTwoPtTenVOneSwap{a-x}{b-x}{c}{d}}_{L^p_x} ,
\norm*{\PiNDiagramSixteen{a-x}{b-x}{c}{d}}_{L^p_x} 
& \stackrel{ \eqref{eq:p-triangle} }{\lesssim}
	\bar E\supphi (\bar T\suptheta + \bar E\suptheta \bar B_1\supk{0,0})
	\bar T^{(0),\circ} ,
\\
\norm*{\PiNDiagramSixteenFlipped{a-x}{b-x}{c}{d}}_{L^p_x} 
& \stackrel{ \eqref{eq:p-triangle} }{\lesssim}
	\bar E\supphi (\bar T\supzero + \bar E\supzero \bar B_1\supk{0,0})
	\bar T^{(\theta),\circ} ,
\\
\norm*{\PiNDiagramSix{a-x}{b-x}{c}{d}}_{L^p_x} 
= \norm*{\PiNDiagramSix{a}{b}{c+x}{d+x}}_{L^p_x} 
& \leq (1) \bar W^{(\phi,\theta)}_p \bar B^{(0,0)}_1 ,
\end{align}
where we used $\tlam(a-d-x) \le 1$ in the last line. 
If the vertical edge is not contracted, we have
\begin{align}
\norm*{\PiNCaseTwoJTwoPtTwelveVOneSwap{a-x}{b-x}{c}{d}}_{L^p_x} ,
\norm*{\PiNDiagramTwenty{a-x}{b-x}{c}{d}}_{L^p_x} 
& \leq \bar H_p^{(\phi,\theta)} ,
\\
\norm*{\PiNCaseTwoJTwoPtTwelveSwap{a-x}{b-x}{c}{d}}_{L^p_x} ,
\norm*{\PiNDiagramTwelve{a-x}{b-x}{c}{d}}_{L^p_x} 
& \stackrel{ \eqref{eq:p-triangle} }{\lesssim}
	\bar E\supphi (\bar T\suptheta + \bar E\suptheta \bar B_1\supk{0,0})
	\bar T \supzero ,
\\
\norm*{\PiNDiagramEighteen{a-x}{b-x}{c}{d}}_{L^p_x}
& \stackrel{ \eqref{eq:p-triangle} }{\lesssim}
	\bar E\supphi (\bar T\supzero + \bar E\supzero \bar B_1\supk{0,0})
	\bar T \suptheta ,
\\
\norm*{\PiNDiagramTen{a-x}{b-x}{c}{d}}_{L^p_x}
= \norm*{\PiNDiagramTen{a}{b}{c+x}{d+x}}_{L^p_x}
& \leq (1) \bar W^{(\phi,\theta)}_p \bar T^{(0)} .
\end{align}
This covers all possible cases and concludes the proof of \eqref{eq:special_bound}.

\section*{Acknowledgements}
We thank Gordon Slade for comments on a preliminary version.
The work was supported in part by NSERC of Canada.
The work of YL was supported in part by 
the National Natural Science Foundation of China (Grant No.~12595284, 12595280).

\end{document}